\theoremstyle{plain}
\newtheorem{thm}{Theorem}[section]
\newtheorem{theorem}[thm]{Theorem}
\newtheorem{lemma}[thm]{Lemma}
\newtheorem{proposition}[thm]{Proposition}
\theoremstyle{definition}
\newtheorem{remark}[thm]{Remark}
\newtheorem{definition}[thm]{Definition}
\newtheorem{example}[thm]{Example}
\newtheorem{question}[thm]{Question}
\numberwithin{equation}{section}
\newcommand{\sC}{{\mathcal C}}
\newcommand{\sI}{{\mathcal I}}
\newcommand{\sK}{{\mathcal K}}
\newcommand{\sL}{{\mathcal L}}
\newcommand{\sM}{{\mathcal M}}
\newcommand{\sO}{{\mathcal O}}
\newcommand{\sP}{{\mathcal P}}
\newcommand{\sR}{{\mathcal R}}
\newcommand{\sU}{{\mathcal U}}
\newcommand{\sW}{{\mathcal W}}
\newcommand{\C}{{\mathbb C}}
\newcommand{\BP}{{\mathbb P}}
\title[Families of holomorphic embeddings]{Extending Nirenberg-Spencer's question on holomorphic embeddings  to families of holomorphic embeddings}
\author[Jun-Muk Hwang]{Jun-Muk Hwang} 
\address{Institute for Basic Science, Center for Complex Geometry, Daejeon, 34126, Republic of Korea}
\email{jmhwang@ibs.re.kr}
\begin{document}

\begin{abstract} Nirenberg and Spencer posed the question whether the germ of a compact complex submanifold  in a  complex manifold  is determined by its infinitesimal neighborhood of finite order when the normal bundle is sufficiently positive. To study the problem for  a larger class of submanifolds, including free rational curves,  we reformulate the question in the setting of families of submanifolds and their infinitesimal neighborhoods.  When the submanifolds have no nonzero vector fields, we prove that it is sufficient to consider only first-order neighborhoods to have an affirmative answer to the reformulated question. When the submanifolds do have nonzero vector fields, we obtain an affirmative answer to the question under the additional assumption that submanifolds have certain nice deformation properties, which  is applicable to free rational curves.    As an application, we obtain a stronger version of the Cartan-Fubini type extension theorem for Fano manifolds of Picard number 1. We also propose a potential application on hyperplane sections of projective  K3 surfaces.
\end{abstract}

\maketitle

\noindent {\sc Keywords.} infinitesimal neighborhood,  Cartan's equivalence method, free rational curves, K3-surfaces

\noindent {\sc MSC2010 Classification.} 32C22, 58A15, 14J45, 14J28

\section{Introduction}
We work in the complex-analytic setting. All geometric objects are holomorphic and open subsets refer to Euclidean topology.  The holomorphic tangent bundle of a complex manifold $X$ is denoted by $T_X$.
When $Y$ is a closed complex submanifold in a complex manifold $X$, its normal bundle $T_X|_Y /T_Y$ is denoted by $N_{Y/X}$.
For  a nonnegative integer $\ell,$ denote by $(Y/X)_{\ell}$
the $\ell$-th infinitesimal neighborhood of $Y$ in $X$, i.e., the analytic space defined
  by the $(\ell+1)$-th power of the ideal of $Y \subset X$.  The germ of (Euclidean) neighborhoods of $Y$ in $X$ is denoted by $(Y/X)_{\sO}.$

  The problems we discuss have originated from the following question  on holomorphic embeddings asked by Nirenberg and Spencer on p. 135 of  \cite{NS}.

   \begin{question}\label{q.NS}
    Let $A \subset X$ be a compact submanifold in a complex manifold whose normal bundle is positive in a suitable sense.  Is
there a positive integer $\ell$ such that  for any submanifold   $ \widetilde{A} \subset \widetilde{X}$ admitting a biholomorphic map of complex spaces  $$(A/X)_{\ell} \cong (\widetilde{A}/\widetilde{X})_{\ell},$$  we have a biholomorphic map of germs  $$(A/X)_{\sO} \cong (\widetilde{A}/\widetilde{X})_{\sO}?$$
       \end{question}

    This question and its formal version (i.e. when $\ell = +\infty$) have been studied by several authors
    (see \cite{Gr66}, \cite{MR} and references therein).
  Among others, Nirenberg and Spencer gave a positive answer under an additional  geometric assumption (having a transversely foliated neighborhood) on $(A/X)_{\sO}$ and   Griffiths gave an affirmative answer in Theorem II in p. 379 of \cite{Gr66}
      when $\dim A \geq 3$ and the normal bundle $N_{A/X}$ is sufficiently positive.
      In these works, the positivity conditions required for $N_{A/X}$ were rather strong,
limiting the applicability of the results.
 To overcome this limitation, there have been  attempts to replace the condition of the positivity of $N_{A/X}$ by more geometric conditions in terms of deformations of $A$ in $X$. The relevance of deformations of submanifolds in this context was suggested  already  in Griffiths' work (Section III.3 of \cite{Gr66}) and  later Hirschowitz in \cite{Hir} obtained results on the formal version of Question \ref{q.NS} under assumptions on the deformations. Ignoring some technicalities, we can describe the class of submanifolds considered by Hirschowitz (e.g. in Conjecture on page 501 of \cite{Hir}) as follows.

   \begin{definition}\label{d.free}
    A compact submanifold
    $A \subset X$ of a complex manifold $X$ is {\em free} if $H^0(A, N_{A/X})$ generates $N_{A/X}$ and all elements of $H^0(A, N_{A/X})$ are unobstructed in $X$. If we  denote by $$ {\rm Douady}(X) \stackrel{\rho}{\leftarrow} {\rm Univ}(X) \stackrel{\mu}{\rightarrow} X$$
    the universal family morphisms of the Douady space parametrizing compact analytic subspaces of $X$ (i.e., Hilbert scheme when $X$  is algebraic), then a compact submanifold $A \subset X$ is free if and only if ${\rm Douady}(X)$ is nonsingular at the corresponding point  $[A] \in {\rm Douady}(X)$
      and the holomorphic map  $\mu: {\rm Univ}(X) \to X$ is submersive in a neighborhood of $\rho^{-1}([A])$.
   \end{definition}

\begin{example}\label{e.rational}
When $A = \BP^1 \subset X$ is a nonsingular rational curve, it is free if and only if $N_{A/X}$ is semi-positive. This agrees with the usual definition of  free rational curves (e.g. in Section 1.1 of \cite{HM98}). Free rational curves arise naturally in the study of uniruled projective manifolds (e.g., see \cite{HM98}). \end{example}

 Let us consider the following more precise version of Question \ref{q.NS}.

 \begin{question}\label{q.free}
 Let $A \subset X$ be a  free submanifold.
 Is
there a positive integer $\ell$ such that for any free submanifold   $ \widetilde{A} \subset \widetilde{X}$ admitting a biholomorphic map of complex spaces  $$(A/X)_{\ell} \cong (\widetilde{A}/\widetilde{X})_{\ell},$$ we have a biholomorphic map of germs  $$(A/X)_{\sO} \cong (\widetilde{A}/\widetilde{X})_{\sO}?$$
 \end{question}

Unfortunately, there are simple counterexamples to Question \ref{q.free},  e.g.  Example \ref{e.blowup} below.
     In Example \ref{e.blowup}, the submanifold $\widetilde{A} \subset \widetilde{X}$ used to give a counterexample to Question  \ref{q.free}  is not general among its deformations.   To avoid such negative answers,  we have to consider finite-order neighborhoods of all deformations simultaneously and ask the question for general members of the family.
Namely, we ask the following {\em family-version} of Questions \ref{q.NS} and \ref{q.free}
(see Remark \ref{r.q} after Definition \ref{d.iso} for a more precise formulation).

 \begin{question}\label{q.family}
 Let $\sK \subset {\rm Douady}(X)$ be a connected open subset (in Euclidean topology) whose members are free submanifolds in $X$.
 Is
there a positive integer $\ell$ such that for any connected open subset   $ \widetilde{\sK} \subset {\rm Douady}(\widetilde{X})$ for which there exists a biholomorphic map $f:\sK \to \widetilde{\sK}$ and a biholomorphic map of complex spaces  $$(A/X)_{\ell} \cong (\widetilde{A}/\widetilde{X})_{\ell}$$ for each $[A] \in \sK$ and $[\widetilde{A}] = f([A]),$  we have a biholomorphic map of germs  $$(A/X)_{\sO} \cong (\widetilde{A}/\widetilde{X})_{\sO}$$ for some member $[A] \in \sK$?
 \end{question}

 We have the following two positive results on Question \ref{q.family}.

  \begin{theorem}\label{t.2}
 In Question \ref{q.family}, assume that $H^0(A, T_A) =0$ for a  general $[A] \in \sK$. Then  $\ell = 1$ has the required property.
  \end{theorem}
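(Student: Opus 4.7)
The plan is to construct a biholomorphism of the universal families over $f$ and then descend it through the submersions $\mu,\widetilde{\mu}$ to a biholomorphism of open Euclidean neighborhoods. Pick a general $[A_0]\in\sK$ with $H^0(A_0,T_{A_0})=0$, shrink $\sK$ to a simply connected open neighborhood $V$ of $[A_0]$, and set $\widetilde{V}:=f(V)$, $\sX:=\rho^{-1}(V)$, $\widetilde{\sX}:=\widetilde{\rho}^{-1}(\widetilde{V})$. By freeness, $\mu|_\sX$ and $\widetilde{\mu}|_{\widetilde{\sX}}$ are surjective submersions onto open neighborhoods $U\supset A_0$ and $\widetilde{U}\supset\widetilde{A}_0$, so the goal becomes producing a biholomorphism $\Psi:U\to\widetilde{U}$ that restricts to the given first-order data.

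The central step is to upgrade the pointwise isomorphisms $\phi_t:(A_t/X)_1\xrightarrow{\sim}(\widetilde{A}_{f(t)}/\widetilde{X})_1$, whose restrictions $\phi_t|_{A_t}:A_t\to\widetilde{A}_{f(t)}$ give biholomorphisms of the underlying manifolds, into a single holomorphic biholomorphism $\Phi:\sX\to\widetilde{\sX}$ covering $f$. Here the hypothesis $H^0(A_t,T_{A_t})=0$ is essential: at any holomorphic section $F_t$ of the relative isomorphism space ${\rm Iso}_V(\sX,\widetilde{\sX})\to V$, the relative tangent space equals $H^0(A_t,F_t^*T_{\widetilde{A}_{f(t)}})\cong H^0(A_t,T_{A_t})=0$, so the map is unramified, and after checking smoothness in the complex-analytic category it becomes étale. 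Starting from the base-point section $F_{t_0}:=\phi_{t_0}|_{A_{t_0}}$ and using simple connectedness of $V$, analytic continuation over this étale cover yields a global holomorphic family $F_t$ that assembles into the required $\Phi$.

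It remains to verify that $\Phi$ carries the foliation by $\mu$-fibers on $\sX$ to the foliation by $\widetilde{\mu}$-fibers on $\widetilde{\sX}$. From the exact sequence $0\to T_{A_t,a}\to T_{\sX,(a,t)}\to T_{V,t}\to 0$ coming from $\rho$, together with the Kodaira--Spencer identification $T_{V,t}=H^0(A_t,N_{A_t/X})$, the kernel of $d\mu_{(a,t)}$ is identified with the kernel of the evaluation map ${\rm ev}_a:H^0(A_t,N_{A_t/X})\to N_{A_t/X,a}$, and analogously for $d\widetilde{\mu}$. Compatibility of $f$ with the first-order neighborhood data (built into the precise formulation of Question \ref{q.family}) forces $df$ to intertwine with the normal-bundle component of $\phi_t$, so $d\Phi$ sends evaluation kernels to evaluation kernels. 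Hence $\Phi$ preserves the two foliations and descends through $\mu,\widetilde{\mu}$ to the required biholomorphism $\Psi:U\to\widetilde{U}$, giving the germ isomorphism at $[A_0]$. The main obstacle I anticipate is proving smoothness of ${\rm Iso}_V(\sX,\widetilde{\sX})$ over $V$: the tangent-space vanishing only handles unramifiedness, and taming the potential obstructions in $H^1(A_t,T_{A_t})$ requires realising the iso space as an open subspace of a relative Douady space of graphs and exploiting the pointwise existence of $\phi_t$; the foliation-preservation step is then a formal consequence of the submersivity of $\mu,\widetilde{\mu}$ and the first-order matching of evaluation maps.
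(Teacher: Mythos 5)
There is a genuine gap, and it sits exactly at the step you call a ``formal consequence.'' Knowing that $df$ intertwines the Kodaira--Spencer identifications, i.e.\ that $df$ maps $\Ker({\rm ev}_a)\subset H^0(A_t,N_{A_t/X})$ into $\Ker({\rm ev}_{\widetilde a})$, only controls the image of $d\Phi\bigl(\Ker({\rm d}\mu)\bigr)$ under ${\rm d}\widetilde\rho$. That places $d\Phi\bigl(\Ker({\rm d}\mu)\bigr)$ inside ${\rm d}\widetilde\rho^{-1}\bigl(\Ker({\rm ev}_{\widetilde a})\bigr)$, which is strictly larger than $\Ker({\rm d}\widetilde\mu)$ (it also contains $T_{\widetilde A}$), so it does not follow that $\Phi$ preserves the $\mu$-foliation: the derivative of $\Phi$ in directions transverse to the fibers is a priori unrelated to the first-order data $\phi_t$, since $\Phi$ (or the given $F_0$) is only known to agree with $\phi_t$ along the fiber itself. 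Closing this gap is precisely where the hypothesis $H^0(A,T_A)=0$ must be used a second time: one compares the restriction of $\Phi$ to the first infinitesimal neighborhood of the fiber $\rho^{-1}([A])$ in $\sU$ with the isomorphism of that neighborhood induced by $\phi_t$ (this induction is Lemma \ref{l.Hir}), and the vanishing of $H^0(A,T_A)$ forces the two to coincide by the uniqueness statement of Lemma \ref{l.Har} (Palamodov). Since the induced map commutes with $\mu$ to first order, its derivative, hence that of $\Phi$, carries $\Ker({\rm d}\mu)$ to $\Ker({\rm d}\widetilde\mu)$. In your write-up the vanishing is never invoked at this point, so the crucial step is unsupported; also the asserted intertwining of $df$ with the normal-bundle component of $\phi_t$ is itself something to prove (it is part of what Lemma \ref{l.Hir} together with the compatibilities of Definition \ref{d.iso} (2) delivers), not an automatic feature of the hypothesis.

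By contrast, your ``central step'' of assembling the pointwise $\phi_t|_{A_t}$ into a global $\Phi$ is both unnecessary and not established as written. In the precise formulation of the hypothesis (iso-equivalence up to order $1$, Definition \ref{d.iso} (2)), the global biholomorphism $F_0:\sU|_{\sW}\to\widetilde\sU|_{\widetilde\sW}$ over $f$ is part of the data, so nothing needs to be continued. If one insists on working only with pointwise isomorphisms, the continuation argument does not go through as stated: an \'etale surjection onto a simply connected base is not a covering map, so sections need not continue analytically (think of an open inclusion), and the smoothness of the relative isomorphism space over $V$ is, as you yourself note, unresolved because of possible obstructions in $H^1(A_t,T_{A_t})$. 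So the proposal misallocates the hypothesis $H^0(A,T_A)=0$ to a step that is not needed, while omitting it from the step (fiber preservation for $F_0$) where it is indispensable and where the paper's proof uses it via Lemmas \ref{l.Hir} and \ref{l.Har}.
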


\begin{theorem}\label{t.1}
In Question \ref{q.family}, assume that the normal bundle $N_{A/X}$ of a general member $A \subset X$ of $\sK$ is separating, in the sense that   $$H^0(A, N_{A/X} \otimes {\bf m}_x) \neq H^0(A, N_{A/X} \otimes {\bf m}_y)$$ for any $x \neq y \in A$, where ${\bf m}_x$ denotes the maximal ideal of $x$. Then there exists a positive integer $\ell = \ell (\sK)$ with the required property. \end{theorem}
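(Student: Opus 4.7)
The plan is to work on the double fibration $\rho:\sU\to\sK$, $\mu:\sU\to X$ with $\sU=\mathrm{Univ}(\sK)$, together with its analogue on the $\widetilde{X}$-side, using the separating hypothesis to pin down a canonical identification of the submanifolds from the first-order data and then assemble the $\ell$-th order information into a biholomorphism of the two double fibrations preserving the natural foliations. Since every member of $\sK$ is free, $\mu$ is a submersion along each $\rho^{-1}([A])$, and the germ $(A/X)_\sO$ is recovered as the leaf space of the foliation $\sF$ on $\sU$ whose leaves are the fibers of $\mu$; producing a local biholomorphism $\sU\to\widetilde{\sU}$ carrying $\sF$ to $\widetilde{\sF}$ therefore yields the desired germ isomorphism.

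At $(x,[A])\in\sU$ the horizontal component of $d\mu$ is the evaluation map $H^0(A,N_{A/X})\to (N_{A/X})_x$ with kernel $H^0(A,N_{A/X}\otimes\mathbf{m}_x)$, so the separating hypothesis is equivalent to the injectivity of the Gauss-type map
\[
\gamma_{[A]}:A\longrightarrow\Gr(T_{[A]}\sK),\qquad x\longmapsto H^0(A,N_{A/X}\otimes\mathbf{m}_x).
\]
Given $f:\sK\to\widetilde{\sK}$ and a first-order isomorphism $(A/X)_1\cong(\widetilde{A}/\widetilde{X})_1$ compatible with $f$, the differential $df_{[A]}$ sends $\gamma_{[A]}(A)$ into $\gamma_{[\widetilde{A}]}(\widetilde{A})$, and the injectivity of $\gamma_{[\widetilde{A}]}$ singles out a canonical biholomorphism $\phi_A:A\to\widetilde{A}$ \emph{determined by $f$ alone}: the $H^0(A,T_A)$-worth of ambiguity in the choice of higher-order isomorphism is killed by the separating hypothesis. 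Putting these canonical $\phi_A$'s together with $f$ defines a holomorphic candidate map $F_0:\sU\to\widetilde{\sU}$, $(x,[A])\mapsto(\phi_A(x),f([A]))$.

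The substantive step is to upgrade $F_0$ into a biholomorphism matching the foliations, equivalently to establish the cocycle condition $\phi_A(x)=\phi_B(x)$ whenever $x\in A\cap B$ for two members of $\sK$. This is not immediate from the canonicity just obtained; to enforce it, I would run a Cartan-equivalence style argument on the foliated manifold $(\sU,\sF)$, constructing from the $\ell$-th order data a canonical jet-level coframe/invariant, verifying inductively in the order of jets that it is preserved by $F_0$, and showing that the construction stabilizes at some finite order $\ell=\ell(\sK)$ depending only on the family. Once $F_0$ is shown to preserve the foliations, it descends to a biholomorphism of leaf spaces, delivering $(A/X)_\sO\cong(\widetilde{A}/\widetilde{X})_\sO$ at a general $[A]\in\sK$. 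The principal obstacle is quantitatively controlling $\ell(\sK)$: because $H^0(A,T_A)$ need not vanish, first-order data alone cannot rigidify $F_0$ globally along $\rho^{-1}([A])$ as in Theorem~\ref{t.2}, and one must argue that the prolongation procedure extracting higher-order invariants of the foliated structure terminates in a bounded number of steps — this Cartan-equivalence input is where I expect the hard work of the proof to lie.
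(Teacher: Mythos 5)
Your general framework is the right one (the double fibration on the universal family and a Cartan-equivalence argument on $\sU$), but as written the proposal has two genuine gaps, and it also misplaces the role of the separating hypothesis. First, the crucial step --- passing from the order-$\ell$ data to an actual local biholomorphism intertwining the structures, with $\ell$ bounded in terms of $\sK$ --- is only asserted (``construct a canonical jet-level coframe \dots\ show the construction stabilizes at some finite order''). This is the technical heart of the theorem; the paper obtains it by encoding the pair of distributions $\Ker({\rm d}\rho)$, $\Ker({\rm d}\mu)$ as a $G$-structure $\sP^{\rho,\mu}\subset\sR(\sU)$ and invoking Morimoto's solution of the equivalence problem (Theorem \ref{t.G}), which is where the finite bound $k_o=\ell(\sK)$ comes from (via prolongation/reduction and involutivity). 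Second, your plan aims to show that the specific map $F_0$ (built from $f$ and the $\phi_A$'s) preserves the foliation $\Ker({\rm d}\mu)$, i.e.\ the cocycle condition $\phi_A(x)=\phi_B(x)$. But the equivalence method does not show that a \emph{given} map preserves the structure; what it yields is the existence of \emph{some} local biholomorphism $\Phi:O\to\widetilde O$ on a small open set $O\subset\sU$, in general unrelated to $F_0$, and indeed the paper's conclusion is only that some germ isomorphism $(A/X)_{\sO}\cong(\widetilde A/\widetilde X)_{\sO}$ exists for some member, not that $F_0$ itself descends. The finite-order isomorphisms $\varphi_u$ at different points need not be jets of any single map, which is exactly why one cannot argue ``inductively that the invariants are preserved by $F_0$.''

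Relatedly, the separating hypothesis is not needed to pin down $\phi_A$ (in the precise formulation, Definition \ref{d.iso}(2), the map $F_0$ and hence the identifications $A\to\widetilde A$ are already part of the iso-equivalence data); where it is genuinely needed is after Morimoto's theorem. The local equivalence $\Phi$ produced there lives only on a small open subset of $\sU$, so descending along the $\mu$-foliation gives an isomorphism of neighborhoods of an open \emph{piece} of $A$ only. To obtain the germ of a neighborhood of the whole compact submanifold $A$ one must propagate $\Phi$ along the entire fiber $\rho^{-1}([A])$; this is Lemma \ref{l.Hw} (Proposition 3.3 of \cite{Hw19}, a Cartan--Fubini-type continuation argument generalizing \cite{HM01}), and it is precisely there that the separating property of $N_{A/X}$ is used. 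Your proposal omits this propagation step entirely (or implicitly replaces it by the unproved claim that $F_0$ itself, which is globally defined along the fibers, preserves the foliation), so the argument as sketched does not close.
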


The assumption of a family of isomorphisms $(A/X)_{\ell} \cong (\widetilde{A}/\widetilde{X})_{\ell}$ for all $[A] \in \sK$ in these two theorems is certainly a strong requirement. Luckily, there are many situations where this stronger condition can be checked effectively,  some of which we exhibit below.

The proofs of Theorem \ref{t.2} and Theorem \ref{t.1}
are of different nature. Whereas the proof of Theorem \ref{t.2} is essentially algebraic,
the proof of  Theorem \ref{t.1} uses tools from differential geometry, the theory of systems of analytic partial differential equations. In particular, it uses Morimoto's work \cite{Mor} on Cartan's equivalence method.
Ultimately, the bound $\ell$ in Theorem \ref{t.1} arises from the Hilbert basis theorem and the effective results on Spencer's  $\delta$-Poincare estimate employed in \cite{Mor}. We mention that the formal aspect of \cite{Mor} was used  in \cite{Hw19}  to study a formal version of Question \ref{q.NS}.

  Unlike Theorem \ref{t.2}, the number $\ell= \ell(\sK)$ in Theorem \ref{t.1} depends on the family $\sK$. In fact, for each positive integer $k$, there is an example, Example \ref{e.t.1} below, of $\sK$ with $\ell(\sK) \geq k$ (see, however, Remark \ref{r.JeVe}). It is a challenging problem to compute $\ell$ explicitly for concrete examples of $X$ and $\sK$ in Theorem \ref{t.1}, which we leave for future studies. Let us just mention that in the case of $X = \BP^n$ with the family $\sK$ consisting of all lines on $\BP^n$, one can deduce $\ell=3$ from the results on generalized path geometries in p. 462 of \cite{CS}.

The most interesting case of Theorem \ref{t.1} is when $\sK$ is a family of free rational curves. For free rational curves, we can always assume the separating condition on the normal bundle in Theorem \ref{t.1}.  In fact, if the normal bundle is not separating, it is a trivial bundle, in which case the rational curve admits a holomorphic tubular neighborhood, i.e. a neighborhood biholomorphic to the product of the rational curve with a complex ball.    Of particular interest is when the free rational curves come from minimal rational curves (in the sense of \cite{HM98}). As a matter of fact, the original motivation of Theorem \ref{t.1} was to use it to prove the following version of Cartan-Fubini type extension theorem (see \cite{HM01} and Theorem \ref{t.CFO} below) for minimal rational curves on Fano manifolds of Picard number 1.

\begin{theorem}\label{t.CF}
Let $X$ be Fano manifold of Picard number 1.
 Let $\sM$  be a family of minimal rational curves on $X$, i.e., an irreducible component of the space of rational curves on $X$  such that  for a general point $x \in X$,
the subscheme $\sM_x \subset \sM$  consisting of members through $x$  is projective and nonempty. Assume \begin{itemize}
  \item[(i)] the subscheme $\sM_x$ is irreducible for a general $x \in X$; and
      \item[(ii)]  general members of $\sM$ are nonsingular. \end{itemize}
          Then there exists a positive integer $\ell$ such that for any Fano manifold $\widetilde{X}$ of Picard number 1 and a family $\widetilde{M}$ of minimal rational curves on $\widetilde{X}$ with irreducible $\widetilde{\sM}_{\widetilde{x}}$ for a general $\widetilde{x} \in \widetilde{X}$,  if there are connected nonempty open subsets $\sK \subset \sM$ and $\widetilde{\sK} \subset \widetilde{\sM}$ with a biholomorphic map $f:\sK \to \widetilde{\sK}$ and a biholomorphic map of complex spaces  $$(A/X)_{\ell} \cong (\widetilde{A}/\widetilde{X})_{\ell}$$ for each $[A] \in \sK$ and $[\widetilde{A}] = f([A]),$ then there exists a biregular morphism $ X \to \widetilde{X}$ which induces $f$. \end{theorem}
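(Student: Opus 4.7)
The plan is to reduce Theorem \ref{t.CF} to Theorem \ref{t.1} followed by an application of the Cartan-Fubini extension theorem of Hwang-Mok (Theorem \ref{t.CFO}). First I need to place us in the scope of Theorem \ref{t.1} by verifying its hypotheses for the family of minimal rational curves. A general member $[A] \in \sM$ is a free nonsingular rational curve: freeness follows from the standard fact that a general minimal rational curve on a uniruled manifold has semi-positive normal bundle generated by global sections, and nonsingularity is hypothesis (ii). For the separating condition on $N_{A/X}$, observe that if $N_{A/X} \cong \sO(a_1) \oplus \cdots \oplus \sO(a_{n-1})$ has at least one positive summand $\sO(a_i)$ with $a_i \geq 1$, then the subspace of $H^0(\BP^1, \sO(a_i))$ consisting of sections vanishing at a point $x \in \BP^1$ varies with $x$, forcing $H^0(A, N_{A/X} \otimes {\bf m}_x) \neq H^0(A, N_{A/X} \otimes {\bf m}_y)$ for $x \neq y$. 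Hence separation can fail only when $N_{A/X}$ is trivial, in which case $A$ would admit a holomorphic tubular neighborhood $A \times \Delta \subset X$, contradicting the Picard number one assumption.

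Applying Theorem \ref{t.1} to the connected open set $\sK \subset \sM$ then supplies the integer $\ell = \ell(\sK)$ required by Theorem \ref{t.CF}: the family $\ell$-th order isomorphism hypothesis produces a biholomorphism of germs $\phi: (A/X)_{\sO} \to (\widetilde{A}/\widetilde{X})_{\sO}$ for some $[A] \in \sK$. In concrete terms, $\phi$ is a biholomorphism between connected open Euclidean neighborhoods $U \supset A$ in $X$ and $\widetilde{U} \supset \widetilde{A} = f([A])$ in $\widetilde{X}$.

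The crucial intermediate step is to check that $\phi$ preserves the variety of minimal rational tangents. Since the hypothesis supplies a family of $\ell$-th order isomorphisms compatible with $f$ for every $[A'] \in \sK$, the construction underlying Theorem \ref{t.1} should produce a $\phi$ that carries nearby deformations of $A$ in $\sK$ to the corresponding nearby deformations of $\widetilde{A}$ in $\widetilde{\sK}$ via $f$. Granted this family-compatibility, for every $y \in U$ the differential $d\phi_y$ maps tangent directions at $y$ to those members of $\sK$ through $y$ that lie near $A$ bijectively to analogous tangent directions at $\phi(y)$. By the irreducibility hypothesis (i), $\sM_y$ is irreducible, so these nearby-member directions form a Zariski-dense open subset of the VMRT $\sC_y \subset \BP T_y X$; taking closures yields $d\phi_y(\sC_y) = \widetilde{\sC}_{\phi(y)}$.

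Finally, Theorem \ref{t.CFO} extends such a VMRT-preserving local biholomorphism between Fano manifolds of Picard number one with irreducible VMRT to a global biregular morphism $\Phi: X \to \widetilde{X}$. Since $\Phi$ agrees with $\phi$ on $U$, it induces $f$ on a neighborhood of $[A]$ in $\sK$, and by analytic continuation across the connected set $\sK$ it induces $f$ on all of $\sK$, finishing the proof. The hard part will be the third paragraph: verifying that the germ biholomorphism supplied by Theorem \ref{t.1} genuinely transports the family structure of $\sK$ through $f$ to that of $\widetilde{\sK}$, rather than merely matching the one submanifold $A$ to $\widetilde{A}$. This family-compatibility ought to be intrinsic to the equivalence-method construction of Theorem \ref{t.1}, but tracking it carefully through the Cartan-Morimoto framework is the point that demands attention.
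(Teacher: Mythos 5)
Your overall strategy---verify the hypotheses of Theorem \ref{t.1} for general members of $\sM$ and $\widetilde{\sM}$, extract a germ biholomorphism $(A/X)_{\sO}\cong(\widetilde{A}/\widetilde{X})_{\sO}$, and conclude via the Cartan--Fubini type extension theorem---is exactly the paper's route, but your verification of the separating condition contains a genuine error. You claim that if $N_{A/X}$ is trivial, then the resulting holomorphic tubular neighborhood contradicts the Picard number one assumption. It does not: general lines on a smooth cubic threefold (more generally on a smooth hypersurface of degree $n$ in $\BP^{n+1}$) are free minimal rational curves with trivial normal bundle on a Fano manifold of Picard number 1, and they do admit such product neighborhoods. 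The correct exclusion of the trivial normal bundle must use hypothesis (i): triviality of $N_{A/X}$ forces $\sM_x$ to be finite for general $x$, and irreducibility then makes it a single point, producing a fibration-like structure by rational curves which is what genuinely contradicts Picard number 1 (this is the paper's appeal to Proposition 1.2.2 of \cite{HM98}). Note that the paper explicitly remarks that Theorem \ref{t.CF} fails for the family of lines on a cubic threefold, i.e.\ precisely the situation your argument would incorrectly rule out; hypothesis (i) is indispensable at this step, and your proposal never invokes it there.

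Your third paragraph also manufactures a difficulty that the paper's formulation avoids, and leaves it unresolved. Theorem \ref{t.CFO}, as stated, requires only a germ biholomorphism between one member $A$ of $\sM$ and one member $\widetilde{A}$ of $\widetilde{\sM}$ (plus hypotheses (i) and (ii)); no VMRT-preservation datum and no compatibility of the germ map with $f$ is needed as input, since a germ biholomorphism automatically carries the local deformations of $A$ filling its neighborhood to deformations of $\widetilde{A}$, which remain members of the component $\widetilde{\sM}$ (cf.\ Definition \ref{d.vmrt}). Thus the ``family-compatibility through the Cartan--Morimoto framework'' that you flag as the hard part and do not establish is simply unnecessary once Theorem \ref{t.CFO} is quoted in the form given in the paper; as written, however, it stands as a second gap in your argument.
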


When compared with the results of \cite{HM01} (or Theorem \ref{t.CFO} below), the advantage of Theorem \ref{t.CF} is that it can be stated in purely algebraic terms (e.g. by replacing $\sK$ by an etale open subset of $\sM$).

The assumptions in Theorem \ref{t.CF} are satisfied by many examples of Fano manifolds of Picard number 1: rational homogeneous spaces, smooth complete intersections with index at least 3 and moduli spaces of stable bundles with coprime degree and rank on curves (see the examples  in Section 1.4 of \cite{Hw01}).
Let us  mention that the assumption (i) in Theorem \ref{t.CF} is necessary. For example, the theorem fails  when $X$ is a cubic threefold and $\sK$ is the family of lines on $X$. On the other hand, the assumption (ii) is  not essential. In fact, one can reformulate Theorem \ref{t.CF} in terms of the graphs of the normalization of the rational curves if general members of $\sM$ are singular, as in Theorem 1.9 of \cite{Hw19}.

As an application of Theorem \ref{t.2}, we look into a question on the geometry of projective K3 surfaces.
Let $X \subset \BP^g$ be a linearly normal projective K3 surface. In \cite{CLM}, Ciliberto, Lopez and Miranda  proved that if the hyperplane bundle generates ${\rm Pic}(X)$ and $g \geq 13$ (or $g=11$), then the isomorphism type of a general hyperplane section $A$ determines $X$, i.e., any other K3-surface $\widetilde{X} \subset \BP^g$ which has a hyperplane section $\widetilde{A}$ biregular to $A$ and generating  ${\rm Pic}(\widetilde{X})$ must be projectively isomorphic to $X$.
More refined results have been obtained recently in \cite{CDS}, but the condition $g \geq 13$  and some  restrictions on ${\rm Pic}(X)$ are necessary.
Without such restrictions, the following seems a reasonable question.

   \begin{question}\label{q.K3}
   Let $X, \widetilde{X} \subset \BP^g, g>2, $ be two linearly normal projective K3 surfaces.
Assume that the sets of isomorphism classes of general hyperplane sections of $X$ and $\widetilde{X}$
  coincide as subsets of the moduli space $\sM_g$ of curves of genus $g > 2$. Are $X$ and $\widetilde{X}$ projectively isomorphic? \end{question}

As a potential approach to Question \ref{q.K3}, we explain in Section 4 how  Theorem \ref{t.2} can be used to reduce it to certain questions on canonical rings of nonhyperelliptic curves.

\bigskip
   The paper is organized as follows. In Section 2, we introduce the notion of the iso-equivalence up to finite order, which is a precise formulation of the conditions in Question \ref{q.family},  and give some immediate examples.  The proof of Theorem \ref{t.2} is in Section 3 and the discussion of its potential application to Question \ref{q.K3} is in Section 4. Only algebro-geometric arguments are employed up to Section 4. The rest of the paper uses differential geometric tools.  In Section 5, further examples are given that arise from submanifolds of projective space having contact of finite order. Section 6 presents the theory of G-structures and proves a key technical result, Theorem \ref{t.G}, using  Morimoto's work \cite{Mor}. In Section 7, we prove Theorem \ref{t.1} using the result of Section 6 and prove also Theorem \ref{t.CF}.

\section{Iso-equivalence of families of free submanifolds and some immediate examples}

We introduce the following  notions, which makes the condition in Theorem \ref{t.1} more systematic.

\begin{definition}\label{d.iso} For a complex manifold $X$, a connected open subset $\sK \subset {\rm Douady}(X)$ is called a {\em free family} if its members are free compact submanifolds of $X$.  Let $\sK \subset {\rm Douady}(X)$ and $\widetilde{\sK} \subset {\rm Douday}(\widetilde{X})$  be   free families  in complex manifolds $X$ and $\widetilde{X}$ of the same dimension.  Let $$\sK \stackrel{\rho}{\leftarrow} \sU \stackrel{\mu}{\rightarrow} X \ \mbox{ and } \widetilde{\sK} \stackrel{\widetilde{\rho}}{\leftarrow} \widetilde{\sU} \stackrel{\widetilde{\mu}}{\rightarrow} \widetilde{X}$$ be the universal families restricted to $\sK$ and $\widetilde{\sK}$.  \begin{itemize} \item[(1)] For each nonnegative integer $k$, denote by $$\sK \stackrel{\rho_{k}}{\leftarrow} \sU_{k} \stackrel{\mu_{k}}{\rightarrow} X$$ the family whose fiber at $[A] \in \sK$ is the $k$-th infinitesimal neighborhood $(A/X)_k$. More precisely, viewing $\sU$ as a closed submanifold in the complex manifold $\sK \times X$, we can define $\sU_k$ as the $k$-th infinitesimal neighborhood  $(\sU/\sK \times X)_k$.  We have the  natural inclusion $j^{\ell}_k$  for $0 \leq \ell <k$:
 $$ \begin{array}{lllll}
\sK & \stackrel{\rho_{\ell}}{\leftarrow} & \sU_{\ell} &  \stackrel{\mu_{\ell}}{\rightarrow}  & X\\
\parallel & & \downarrow j^{\ell}_k  & & \parallel \\
\sK & \stackrel{\rho_{k}}{\leftarrow} & \sU_{k} &\stackrel{\mu_{k}}{\rightarrow} & X
 \end{array} $$ We define $\widetilde{\sU}_k$ and $\widetilde{j}^{\ell}_k$ similarly.
  \item[(2)] For a nonnegative integer $k$, we say that $\sK$ and $\widetilde{\sK}$ are {\em iso-equivalent up to order} $k$,
if there are connected
 open subsets $\sW \subset \sK$ and $\widetilde{\sW} \subset \widetilde{\sK}$ equipped with biholomorphic maps $f: \sW \to \widetilde{\sW}$ and
 $F_{\ell}: \rho_{\ell}^{-1}(\sW) \to \widetilde{\rho}_{\ell}^{-1}(\widetilde{\sW})$ for each $0 \leq \ell \leq k$ such that $$f \circ \rho_{\ell}|_{\rho_{\ell}^{-1}(\sW)} =   \widetilde{\rho}_{\ell} \circ F_{\ell} \mbox{ and }
F_k \circ j^{\ell}_k|_{\rho_{\ell}^{-1}(\sW)} = \widetilde{j}^{\ell}_k \circ F_{\ell}.$$
\item[(3)] We say that $\sK$ and $\widetilde{\sK}$ are {\em germ-equivalent} if there exist some members $[A] \in \sK$ and $[\widetilde{A}] \in \widetilde{\sK}$ such that $(A/X)_{\sO} \cong (\widetilde{A}/\widetilde{X})_{\sO}$.  \end{itemize} \end{definition}

  The following is straight-forward from the definition.

    \begin{lemma}\label{l.iso}
     Let $\sK$ and $\widetilde{\sK}$ be as in Definition \ref{d.iso}.
      \begin{itemize} \item[(1)] If $\sK$ and $\widetilde{\sK}$ are iso-equivalent up to order $k$, then they are iso-equivalent up to order $\ell$ for any $0 \leq \ell \leq k$.
      \item[(2)] If $\sK$ and $\widetilde{\sK}$ are germ-equivalent, then they are iso-equivalent up to order $k$ for any nonnegative integer $k$. \end{itemize} \end{lemma}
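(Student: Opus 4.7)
The plan is to verify each part directly from Definition \ref{d.iso}; since the lemma is asserted to be ``straight-forward'', the work is entirely about unwinding the definition and tracking compatibility with the infinitesimal-neighborhood inclusions. For part (1), I will reuse the given data: starting from an iso-equivalence up to order $k$ via $f: \sW \to \widetilde{\sW}$ and $F_0, \ldots, F_k$, I will exhibit the iso-equivalence up to order $\ell \leq k$ using the same $f$ together with the truncated tuple $F_0, \ldots, F_\ell$. The base compatibility $f \circ \rho_j = \widetilde{\rho}_j \circ F_j$ is inherited for every $0 \leq j \leq \ell$. The only new compatibility to check is $F_\ell \circ j^j_\ell = \widetilde{j}^j_\ell \circ F_j$ for $0 \leq j \leq \ell$; the derivation I have in mind uses the factorizations $j^j_k = j^\ell_k \circ j^j_\ell$ and $\widetilde{j}^j_k = \widetilde{j}^\ell_k \circ \widetilde{j}^j_\ell$ together with the two hypotheses $F_k \circ j^\ell_k = \widetilde{j}^\ell_k \circ F_\ell$ and $F_k \circ j^j_k = \widetilde{j}^j_k \circ F_j$ to produce the equality $\widetilde{j}^\ell_k \circ F_\ell \circ j^j_\ell = \widetilde{j}^\ell_k \circ \widetilde{j}^j_\ell \circ F_j$, from which the desired identity follows by cancelling the closed immersion $\widetilde{j}^\ell_k$ on the left (a closed immersion of complex spaces is a monomorphism).

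For part (2), I start from a germ biholomorphism $\Phi: U \to \widetilde{U}$ between Euclidean open neighborhoods of $A \subset X$ and $\widetilde{A} \subset \widetilde{X}$ with $\Phi(A) = \widetilde{A}$, which witnesses germ-equivalence. The plan is to propagate $\Phi$ to every order simultaneously. Freeness of $A$ gives smoothness of $\sK$ near $[A]$ and lets me choose a connected Euclidean open neighborhood $\sW \subset \sK$ of $[A]$ whose members lie inside $U$; pushing members forward by $\Phi$ defines a holomorphic map $f: \sW \to \widetilde{\sK}$ which is biholomorphic onto its image $\widetilde{\sW}$ (after possibly shrinking, since $\Phi^{-1}$ supplies the inverse). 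Restricting $\Phi$ to each deformed submanifold and its $k$-th infinitesimal neighborhood and assembling fiberwise over $\sW$ produces the maps $F_k: \rho_k^{-1}(\sW) \to \widetilde{\rho}_k^{-1}(\widetilde{\sW})$ for every $k \geq 0$; all required compatibility conditions with $\rho_\ell$ and $j^\ell_k$ are automatic, because every $F_k$ is induced by the same ambient biholomorphism $\Phi$.

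The main (though still mild) obstacle is the bookkeeping in part (2) that converts the single germ isomorphism $\Phi$ into a family datum over an open neighborhood of $[A]$: one must check that the pushed-forward family yields a biholomorphism between open subsets of $\sK$ and $\widetilde{\sK}$. This is precisely where the freeness hypothesis and the resulting smoothness of the Douady space near $[A]$ are used, together with the fact that containment of the corresponding submanifold in $U$ is an open condition in $\sK$. Beyond this point the argument is purely formal.
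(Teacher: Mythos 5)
Your proof is correct and follows exactly the argument the paper intends when it calls the lemma ``straight-forward from the definition'' (the paper supplies no written proof): truncating the data and cancelling the closed immersion $\widetilde{j}^{\ell}_k$ for part (1), and inducing all the $F_k$ from the single ambient germ biholomorphism, using freeness and openness of the containment condition, for part (2). No gaps.
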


\begin{remark}\label{r.q}
Using the terminology of Definition \ref{d.iso}, we can restate Question \ref{q.family} as follows. \begin{itemize} \item  For a free family $\sK \subset {\rm Douady}(X)$, is there a positive integer $\ell$ such that if $\sK$ and a free family $\widetilde{\sK} \subset {\rm Douady}(\widetilde{X})$ are iso-equivalent up to order $\ell,$ then they are germ-equivalent? \end{itemize} \end{remark}

To give some immediate nontrivial examples of Definition \ref{d.iso}, let us recall the following definition.

\begin{definition}\label{d.unbendable}
A free nonsingular rational curve $A$ on a complex manifold $X$ is {\em unbendable} if its normal bundle is of the form
$$N_{A/X} \cong \sO(1)^{\oplus p} \oplus \sO^{\oplus (n -p-1)}$$ where $n= \dim X$ and  $p$ is some nonnegative integer less than $n$. Note that $p+2 = A \cdot K_X^{-1}$ is the {\em anti-canonical degree} of $A$. \end{definition}

Unbendable rational curves arise naturally in the study of uniruled projective manifolds, as general members of minimal rational curves (e.g., see \cite{HM98} for a survey, where  unbendable rational curves were called standard rational curves).
It is easy to check when families of unbendable rational curves are iso-equivalent up to order 1:

 \begin{proposition}\label{p.mrc}
Let $\sK \subset {\rm Douady}(X)$ (resp. $\widetilde{\sK} \subset {\rm Douady}(\widetilde{X})$) be a free family of unbendable rational curves    on a complex manifold $X$ (resp. $\widetilde{X}$).  Then $\sK$ and $\widetilde{\sK}$ are iso-equivalent up to order 1 if and only if the anti-canonical degrees of their members are equal. \end{proposition}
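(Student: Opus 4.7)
The plan is to prove the two implications separately. The "only if" direction is an immediate fibrewise consequence of the definition: given iso-equivalence up to order $1$ with data $(f, F_0, F_1)$ on open subsets $\sW \subset \sK$ and $\widetilde{\sW} \subset \widetilde{\sK}$, pick any $[A] \in \sW$ and set $[\widetilde{A}] := f([A])$. Restricting $F_1$ to the fibre $\rho_1^{-1}([A]) = (A/X)_1$ yields a biholomorphism $(A/X)_1 \cong (\widetilde{A}/\widetilde{X})_1$. Its reduction gives $A \cong \widetilde{A}$, and its nilpotent ideal sheaf gives $N^*_{A/X} \cong N^*_{\widetilde{A}/\widetilde{X}}$ as $\sO_A$-modules. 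Since both normal bundles are unbendable of the form $\sO(1)^{\oplus p} \oplus \sO^{\oplus n-p-1}$, this forces $p = \widetilde{p}$ and hence the anti-canonical degrees $p+2$ agree.

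For the "if" direction, assume the anti-canonical degrees coincide, so the fibrewise normal bundles have the same splitting type. Shrink $\sW$ to a Stein contractible neighbourhood (a polydisk in a chart) of a point $[A_0] \in \sK$ and similarly $\widetilde{\sW}$ around $[\widetilde{A}_0] \in \widetilde{\sK}$; because $\dim\sK = h^0(\BP^1, N_{A/X}) = n+p-1 = \dim\widetilde{\sK}$, we may choose a biholomorphism $f\colon \sW \to \widetilde{\sW}$. Over such $\sW$, the $\BP^1$-bundle $\rho\colon \sU|_\sW \to \sW$ is holomorphically trivial (its associated rank-two vector bundle is trivial on a Stein contractible base by Oka's principle), and likewise for $\widetilde{\sU}|_{\widetilde{\sW}}$; using such trivializations, set $F_0 := f \times \mathrm{id}_{\BP^1}$. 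Next I produce a bundle isomorphism $\phi\colon N_{\sU/\sK\times X}|_{\sU|_\sW} \xrightarrow{\sim} F_0^*\, N_{\widetilde{\sU}/\widetilde{\sK}\times\widetilde{X}}$. Each side is a rank-$(n-1)$ vector bundle on $\sW \times \BP^1$ restricting to $\sO(1)^{\oplus p} \oplus \sO^{\oplus n-p-1}$ on every $\BP^1$-slice. A standard Grauert / cohomology-and-base-change argument (using the fibrewise vanishing of $\mathrm{Ext}^1$ among the summands, which have slopes differing by at most $1$) identifies any such family, over a Stein contractible base, with the pullback of $\sO(1)^{\oplus p} \oplus \sO^{\oplus n-p-1}$ from the $\BP^1$-factor; applying this on both sides yields $\phi$.

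Finally, promote $(F_0,\phi)$ to $F_1$. The complex space $\sU_1|_\sW$ is a square-zero thickening of $\sU|_\sW$ whose nilpotent ideal is the conormal bundle $N^* := N^*_{\sU/\sK\times X}$, and its isomorphism class relative to these data is classified by an element of $H^1(\sU|_\sW, T_{\sU}\otimes N^*)$. On each fibre $A \cong \BP^1$ the sheaf $T_\sU \otimes N^*$ splits as a direct sum of line bundles $\sO(d)$ with $d \in \{-1,0,1,2\}$ (using that the extension $0 \to T_A \to T_\sU|_A \to \rho^*T_\sK|_A \to 0$ splits because $\mathrm{Ext}^1(\sO^{\dim\sK}, \sO(2))=0$), all of which have $H^1(\BP^1,\cdot)=0$. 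Grauert's theorem therefore makes $\rho_*(T_\sU \otimes N^*)$ locally free and kills $R^1\rho_*$, and Leray together with Cartan's Theorem B on the Stein base $\sW$ give $H^1(\sU|_\sW, T_\sU \otimes N^*) = 0$. Consequently both $\sU_1|_\sW$ and $\widetilde{\sU}_1|_{\widetilde{\sW}}$ are isomorphic to the standard split square-zero extension, and composing these two trivializations with $(F_0,\phi)$ produces the desired $F_1$. The main obstacle is this last step: the fibrewise $\BP^1$-computation is trivial, but passing from fibrewise vanishing to global vanishing on the total space requires combining Grauert with Cartan's Theorem B on the Stein base, and the same machinery underlies the trivialization of the normal-bundle family used just before; once these vanishings are in hand, the assembly of $f$, $F_0$, and $F_1$ is routine.
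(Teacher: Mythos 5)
Your ``only if'' direction is fine, and your ``if'' direction is essentially a family-level elaboration of the paper's own (much shorter) argument: the paper observes that $(A/X)_1$ is determined by the extension class of $0 \to T_A \to T_X|_A \to N_{A/X} \to 0$ and that $H^1(A, T_A \otimes N^*_{A/X}) = H^1(\BP^1, \sO(1)^{\oplus p} \oplus \sO(2)^{\oplus (n-1-p)}) = 0$, so the first neighborhood is always split and depends only on $p$, leaving the assembly over $\sW$ implicit. Your intermediate steps --- triviality of the $\BP^1$-bundle over a contractible Stein $\sW$, the relative splitting of the family of normal bundles, and the vanishing $H^1(\sU|_\sW, T_\sU \otimes N^*)=0$ via Grauert plus Theorem B --- are correct.

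There is, however, a gap in the final step. Definition \ref{d.iso} (2) requires not only $F_1 \circ j^0_1 = \widetilde{j}^0_1 \circ F_0$ but also $f \circ \rho_1 = \widetilde{\rho}_1 \circ F_1$, and your $F_1$ is produced by identifying $\sU_1|_\sW$ and $\widetilde{\sU}_1|_{\widetilde{\sW}}$ with the split square-zero extension only as \emph{abstract} thickenings (the classification by $H^1(\sU|_\sW, T_\sU \otimes N^*) = {\rm Ext}^1(\Omega_{\sU}, N^*)$ normalizes the isomorphism on the reduction and on the ideal, but says nothing about the map to $\sK$). A morphism from a non-reduced space to $\sK$ is not determined by its restriction to the reduction: extensions of $\rho$ to the split thickening form a torsor under $H^0(\sU|_\sW, \rho^* T_\sK \otimes N^*)$, which is nonzero whenever $p < n-1$, so the required compatibility with $\rho_1, \widetilde{\rho}_1$ does not follow from your construction as written. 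The repair is to classify first-order thickenings \emph{over} $\sK$ (thickening together with the extension of $\rho$), which are governed by ${\rm Ext}^1_{\sO_\sU}(\Omega_{\sU/\sK}, N^*) = H^1(\sU|_\sW, T^{\rho} \otimes N^*)$ with $T^{\rho} = \Ker({\rm d}\rho)$; fibrewise $T^{\rho} \otimes N^* \cong \sO(1)^{\oplus p} \oplus \sO(2)^{\oplus (n-1-p)}$ --- exactly the vanishing in the paper's proof --- so the same Grauert/Theorem B argument kills this group, both $\sU_1|_\sW$ and $\widetilde{\sU}_1|_{\widetilde{\sW}}$ are split as thickenings over their bases, and the resulting $F_1$ commutes with the projections. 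Equivalently, one can correct your $F_1$ by composing with an automorphism of the split thickening, using that $H^0(\sU|_\sW, T_\sU \otimes N^*) \to H^0(\sU|_\sW, \rho^* T_\sK \otimes N^*)$ is surjective because its cokernel injects into the vanishing group $H^1(\sU|_\sW, T^{\rho} \otimes N^*)$.
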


\begin{proof}
  It is well-known (see the remark after Proposition 1.7 in \cite{Gr66} or Section 4.1 of \cite{CDS}) that  the first infinitesimal neighborhood of a submanifold $A \subset X$ is determined by the extension class of
$$ 0 \to T_A \to T_X|_A \to N_{A/X} \to 0.$$
When $A$ is an unbendable rational curve, we have $T_A \cong \sO(2)$ and $N_{A/X} \cong \sO(1)^{\oplus p} \oplus \sO^{\oplus (n-1-p)}, n = \dim X$, which implies  $$H^1(A, T_A \otimes N^*_{A/X}) = H^1(\BP^1, \sO(1)^{\oplus p} \oplus \sO(2)^{\oplus (n-1-p)}) =0.$$ Thus the exact sequence always splits and the first infinitesimal neighborhood is uniquely determined by $p$. \end{proof}

 Proposition \ref{p.mrc} shows that   the iso-equivalence up to order 1 is a weak condition for families of rational curves, in contrast to Theorem \ref{t.2}.
We can go one step higher in the following case.

\begin{proposition}\label{p.quasi}
An unbendable rational curve $A$ on a complex manifold $S$ is called a quasi-line if $N_{A/S} \cong \sO(1)^{\oplus (n -1)}, n = \dim S.$ Two free families $\sK \subset {\rm Douady}(X)$ and $\widetilde{\sK} \subset {\rm Douady}(\widetilde{X})$ of quasi-lines on complex manifolds $X$ and $\widetilde{X}$ of the same dimension $n$ are iso-equivalent up to order 2. \end{proposition}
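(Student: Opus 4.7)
The plan is to bootstrap from Proposition \ref{p.mrc} (which already yields iso-equivalence up to order $1$, since quasi-lines in $n$-dimensional ambient manifolds all have anti-canonical degree $n+1$) to order $2$ by arguing via cohomology vanishing that the second-order thickening of a quasi-line is rigid, and then promoting this to a holomorphic family statement.

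First, I would apply Proposition \ref{p.mrc} to produce connected open subsets $\sW \subset \sK$, $\widetilde{\sW} \subset \widetilde{\sK}$ and biholomorphisms $f:\sW \to \widetilde{\sW}$ together with $F_0, F_1$ as in Definition \ref{d.iso}. The remaining task is to construct a lift $F_2: \rho_2^{-1}(\sW_0) \to \widetilde{\rho}_2^{-1}(f(\sW_0))$ of $F_1$, for a possibly smaller $\sW_0 \subset \sW$, satisfying $\widetilde{j}^1_2 \circ F_1 = F_2 \circ j^1_2$. Since a quasi-line is a local complete intersection in $X$, the kernel of $\sO_{\sU_2} \twoheadrightarrow \sO_{\sU_1}$ is, on each fiber $A$, equal to $\Sym^2 N^*_{A/X} = \sO(-2)^{\oplus \binom{n}{2}}$.

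On a single quasi-line $A$, square-zero extensions of $\sO_{(A/X)_1}$ by $\Sym^2 N^*_{A/X}$ are classified, up to isomorphism extending the identity on $(A/X)_1$, by $H^1(A, \sD)$, where $\sD$ is the sheaf of $\C$-derivations $\sO_{(A/X)_1} \to \Sym^2 N^*_{A/X}$. Applying $\mathcal{H}om(-, \Sym^2 N^*_{A/X})$ to $0 \to N^*_{A/X} \to \sO_{(A/X)_1} \to \sO_A \to 0$ places $\sD$ in
$$0 \to T_A \otimes \Sym^2 N^*_{A/X} \to \sD \to N_{A/X} \otimes \Sym^2 N^*_{A/X} \to 0,$$
whose outer terms evaluate on a quasi-line to $\sO^{\oplus \binom{n}{2}}$ and $\sO(-1)^{\oplus (n-1)\binom{n}{2}}$ respectively, both with vanishing $H^1$ on $\BP^1$. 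Hence $H^1(A, \sD) = 0$, so $(A/X)_2$ is uniquely determined as an extension of $(A/X)_1$ by $\Sym^2 N^*_{A/X}$; in particular, under the identification of ideals induced by $F_1$, we obtain pointwise over $\sW$ an isomorphism $(A/X)_2 \cong (\widetilde{A}/\widetilde{X})_2$ extending $F_1|_{(A/X)_1}$.

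To upgrade pointwise existence to a holomorphic family $F_2$, I would apply Grauert's base-change theorem to the proper submersion $\rho_2$, combined with the fiberwise $H^1$-vanishing, to conclude that the relative version of $\sD$ has vanishing $R^1 (\rho_2)_*$ in a neighborhood of any fixed $[A_0] \in \sW$; a standard \v{C}ech-cocycle patching then produces the desired $F_2$ on $\rho_2^{-1}(\sW_0)$ for a sufficiently small connected $\sW_0 \ni [A_0]$. The principal obstacle is precisely this last step: turning pointwise $H^1$-vanishing into a genuine holomorphic family of isomorphisms. Although each ingredient is standard, care is needed to set up the relative derivation sheaf correctly, to exploit $F_1$ as the natural identification of the two relative ideal sheaves $\sI_{\sU}^2/\sI_{\sU}^3$ and $\sI_{\widetilde{\sU}}^2/\sI_{\widetilde{\sU}}^3$, and to verify that the patched $F_2$ respects the algebra structure so that the compatibility required by Definition \ref{d.iso}(2) holds.
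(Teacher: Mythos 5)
Your argument is correct and essentially the paper's: the derivation sheaf $\sD$ in your displayed exact sequence is just $T_X|_A \otimes \Sym^2 N^*_{A/X}$, so your $H^1$-vanishing on a quasi-line is exactly the vanishing of the obstruction group that the paper invokes via Theorem 2.5 of \cite{MR} (the paper phrases it as lifting the order-one equivalence, comparing with the line in $\BP^n$, rather than lifting $F_1$ directly). Your extra discussion of promoting the fiberwise isomorphisms to a holomorphic family $F_2$ via relative $H^1$-vanishing addresses a point the paper's fiberwise proof leaves implicit, and is a reasonable, if only sketched, way to handle it.
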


\begin{proof}
Let $A\subset X = \BP^n$ be a line. For a quasi-line $\widetilde{A} \subset \widetilde{X}$, the  obstruction to lift the equivalence $(A/X)_1 \cong (\widetilde{A}/ \widetilde{X})_1$ of Proposition \ref{p.mrc}  to the second infinitesimal neighborhoods lies in
 $$H^1( \widetilde{A}, T_{\widetilde{X}}|_{\widetilde{A}} \otimes {\rm Sym}^2 N^*_{\widetilde{A}/\widetilde{X}})$$ by Theorem 2.5 of \cite{MR}. This cohomology group vanishes by $N_{\widetilde{A}/\widetilde{X}} \cong \sO(1)^{\oplus (n -1)}$ and
 $T_X|_A \cong \sO(2) \oplus \sO(1)^{\oplus (n-1)}.$ \end{proof}

\section{Proof of  Theorem \ref{t.2}}

We prove the following  theorem which restates Theorem \ref{t.2} in terms of Definition \ref{d.iso}.

\begin{theorem}\label{t.2g}
Let $\sK \subset {\rm Douady}(X)$ be a  free family  in a complex manifold $X$ containing  a member $A \subset X$ which satisfies $H^0(A, T_A) =0$. Then for any free family $\widetilde{\sK} \subset {\rm Douady}(\widetilde{X})$  in a  complex manifold $\widetilde{X}$, if $\sK$ and $\widetilde{\sK}$ are  iso-equivalent up to order $1$, then they are germ-equivalent. \end{theorem}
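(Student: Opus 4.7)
The plan is to construct a germ biholomorphism $\Phi\colon (X, a) \to (\widetilde{X}, \widetilde{a})$ at some $a \in A$ with $\widetilde{a} := F_0([A], a)$, by descending $F_0\colon \sU \to \widetilde{\sU}$ through the submersions $\mu$ and $\widetilde{\mu}$ (which are submersions near $\rho^{-1}([A])$ and $\widetilde{\rho}^{-1}([\widetilde{A}])$ by freeness). Setting $\Phi(x) := \widetilde{\mu}(F_0([A'], x))$ for any $[A'] \in \sW$ with $x \in A'$ yields a well-defined local biholomorphism provided $F_0$ sends each $\mu$-fiber into a $\widetilde{\mu}$-fiber, so the task reduces to showing $dF_0(T^{\rm vert}_\mu) \subset T^{\rm vert}_{\widetilde{\mu}}$, where $T^{\rm vert}_\mu := \ker d\mu$.

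At $([A], a) \in \sU$ one has the incidence description $T_{([A], a)}\sU = \{(\xi, v) \in H^0(A, N_{A/X}) \oplus T_a X : \xi(a) \equiv v \bmod T_a A\}$, and $T^{\rm vert}_\mu = \{(\xi, 0) : \xi \in \ker(\mathrm{ev}_a)\}$. The iso $F_1$ induces a tangent map $dF_1\colon T(\sK \times X)|_\sU \to T(\widetilde{\sK}\times\widetilde{X})|_{\widetilde{\sU}}$ which preserves $T\sU$ (since $F_1|_\sU = F_0$), intertwines the projection to the $\sK$-factor via $df$, and preserves the subbundle $p_X^* TX$ (since $F_1$ respects $\rho_1$). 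In the splitting $T(\sK \times X) = p_\sK^* T\sK \oplus p_X^* TX$, it has the block form $\bigl(\begin{smallmatrix} df & 0 \\ A_{21} & A_{22} \end{smallmatrix}\bigr)$, where $A_{22}$ restricts on each fiber to the isomorphism $T_X|_A \to T_{\widetilde{X}}|_{\widetilde{A}}$ induced by $F_1|_{(A/X)_1}$, sending $T_A$ to $T_{\widetilde{A}}$ and inducing the normal bundle isomorphism $\bar A_{22}\colon N_{A/X} \to N_{\widetilde{A}/\widetilde{X}}$. Preservation of $T\sU$ then translates into the constraint $[A_{21}\xi]_N = (df\xi)(\widetilde{a}) - \bar A_{22}(\xi(a))$ in $N_{\widetilde{A}/\widetilde{X}}(\widetilde{a})$, where $[\cdot]_N$ denotes reduction modulo $T_{\widetilde{a}}\widetilde{A}$.

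The central technical step will be the identity $df = H^0(\bar A_{22})$ as maps $H^0(A, N_{A/X}) \to H^0(\widetilde{A}, N_{\widetilde{A}/\widetilde{X}})$. I expect this from the naturality of the Kodaira–Spencer map: an element $\xi \in T_{[A]}\sK$ is geometrically realized as the normal field $\xi \in H^0(A, N_{A/X})$ attached to the first-order thickening of $\sU$ along the $\xi$-direction, and since $F_1$ identifies this thickening with the first-order thickening of $\widetilde{\sU}$ along $df\xi$, one must have $\bar A_{22}\circ \xi = df\xi$ as sections. Granting this, $[A_{21}\xi]_N$ vanishes, so $A_{21}$ factors through the inclusion $T_{\widetilde{A}} \hookrightarrow T_{\widetilde{X}}|_{\widetilde{A}}$. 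Restricted to the fiber over $[A]$, $A_{21}$ then becomes a bundle map from the trivial bundle $H^0(A, N_{A/X}) \otimes \sO_{\widetilde{A}}$ to $T_{\widetilde{A}}$, whose space of bundle maps equals $H^0(A, N_{A/X})^* \otimes H^0(\widetilde{A}, T_{\widetilde{A}})$; the hypothesis $H^0(A, T_A) = 0$, transported via $F_0|_A$, forces this space to vanish, so $A_{21} = 0$.

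With $A_{21} = 0$, the tangent map $dF_1$ respects the product decomposition on the fiber over $[A]$, and upper semi-continuity of $h^0(T_{A'})$ propagates this to a neighborhood of $[A]$ in $\sW$. Hence $dF_0$ preserves $T^{\rm vert}_\mu$ there, $F_0$ carries $\mu$-fibers to $\widetilde{\mu}$-fibers near $\rho^{-1}([A])$, and $\Phi$ is well-defined on an open neighborhood of $A$ in $X$; submersivity of $\mu, \widetilde{\mu}$ together with $\dim X = \dim \widetilde{X}$ then forces $\Phi$ to be a local biholomorphism, delivering the germ-equivalence $(A/X)_\sO \cong (\widetilde{A}/\widetilde{X})_\sO$. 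The main obstacle is the identity $df = H^0(\bar A_{22})$, which is not a formal consequence of $F_1$ being merely a scheme isomorphism, and whose verification will require careful analysis of how $F_1$ interacts with the $\sK$-directed first-order thickening of $\sU$ in $\sK \times X$.
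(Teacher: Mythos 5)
Your overall skeleton is the same as the paper's: show that ${\rm d}F_0$ sends ${\rm Ker}({\rm d}\mu)$ into ${\rm Ker}({\rm d}\widetilde{\mu})$ and then descend $F_0$ through the submersions to get the germ equivalence. The gap is exactly the step you flag, and it is not a verifiable technicality but a false statement in the generality you need. The identity $df=H^0(\bar A_{22})$ does not follow from the data $(f,F_0,F_1)$ of iso-equivalence up to order $1$: Definition \ref{d.iso}(2) constrains $F_1$ only through $\rho_1$ and through its restriction to the reduced space $\sU$, so one may replace $F_1$ by $F_1\circ\Theta$, where $\Theta={\rm Id}+D$ is the automorphism of $\sU_1$ over $\sK$ fixing $\sU$ determined by a derivation $D\in H^0\bigl(\rho^{-1}(\sW),\,\mu^*T_X\otimes N^*_{\sU/\sK\times X}\bigr)$. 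If $D$ has nonzero image in $H^0(N_{\sU/\sK\times X}\otimes N^*_{\sU/\sK\times X})$, this changes $\bar A_{22}$ (and $A_{21}$) while $f$, $F_0$ are untouched, so $df=H^0(\bar A_{22})$ can hold for at most one of the two admissible choices. Concretely, take $X=\widetilde X=A\times B$ with $A$ of genus $\geq 2$ (so $H^0(A,T_A)=0$ and $N_{A/X}$ is trivial), $\sK=\widetilde\sK$ the family of fibers, $f,F_0$ identities and $F_1=\Theta$: your identity fails, your claim $[A_{21}\xi]_N=0$ for all $\xi$ fails, yet the theorem is of course true there. In your own block computation the preservation of $T\sU$ gives precisely $[A_{21}\xi]_N=(df\xi)(\widetilde a)-\bar A_{22}(\xi(a))$, so asserting $df=H^0(\bar A_{22})$ is literally equivalent to the vanishing you want to deduce from it; the appeal to ``naturality of the Kodaira--Spencer map'' is circular, because the incidence description of $T\sU$ you start from \emph{is} the Kodaira--Spencer identification and yields nothing beyond that constraint.

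What your argument actually needs is only the weaker statement that $A_{21}\xi=0$ when $\xi(a)=0$, equivalently that $df$ maps $\{\xi\in H^0(A,N_{A/X}):\xi(a)=0\}$ into $\{\widetilde\xi:\widetilde\xi(\widetilde a)=0\}$ for every $a$; this is stable under the twists above, but it is the real content of the theorem and no mechanism for it is offered. This is where the paper's two lemmas do the work: Lemma \ref{l.Hir} (Hirschowitz's functoriality) turns the fiberwise isomorphism $F_{1,A}:(A/X)_1\to(\widetilde A/\widetilde X)_1$ into an isomorphism $F^{\sU}_{1,A}$ of the first neighborhoods of the fibers of $\rho$ that commutes with \emph{both} $\rho$ and $\mu$, and Lemma \ref{l.Har} (Palamodov rigidity), which is where $H^0(A,T_A)=0$ enters, identifies $F^{\sU}_{1,A}$ with the restriction $F_0^A$ of the given $F_0$; preservation of ${\rm Ker}({\rm d}\mu)$ then follows. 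In your plan the hypothesis $H^0(A,T_A)=0$ is used only afterwards, to kill a residual $T_{\widetilde A}$-valued term, which is a further sign that the proposal as structured cannot be completed: the hypothesis must be brought to bear at the crux, as in the paper, not after it.
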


 To prove this, we use the condition $H^0(A, T_A) =0$ via the following  lemma, which is contained in Theorem 4.1 of \cite{Pa} and also in the proof of Corollary 18.3 in \cite{Ha10}.

\begin{lemma}\label{l.Har}
Let $\pi: U \to Y$ be a proper smooth morphism between two complex manifolds and
let $B$ be the fiber $\pi^{-1}(y)$ over a point $y \in Y$. If $H^0(B, T_B) =0$, then an automorphism of the first infinitesimal neighborhood  $\varphi: (B/U)_1 \to (B/U)_1$ satisfying $\pi \circ \varphi = \pi|_{(B/U)_1}$ and $\varphi|_B = {\rm Id}_B$ must be the identity ${\rm Id}_{(B/U)_1}$. \end{lemma}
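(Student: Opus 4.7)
The plan is to transfer the statement into the vanishing of a single invariant $\delta := \varphi^\sharp - \mathrm{Id}$ attached to $\varphi$, identify $\delta$ as an element of a concrete cohomology group on $B$, and then use the smoothness of $\pi$ to reduce that group to copies of $H^0(B,T_B)$, which vanishes by hypothesis.

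First I would encode $\varphi$ as a sheaf automorphism $\varphi^\sharp$ of $\sO_U/\sI_B^2$, where $\sI_B\subset\sO_U$ is the ideal of $B$; the assumption $\varphi|_B=\mathrm{Id}_B$ says that $\varphi^\sharp$ reduces to the identity modulo $\sI_B/\sI_B^2$. Setting $\delta:=\varphi^\sharp-\mathrm{Id}$, the image of $\delta$ lands in $\sI_B/\sI_B^2 = N^*_{B/U}$. Expanding $\varphi^\sharp(fg)=(f+\delta f)(g+\delta g)$ and discarding the cross term $\delta f\cdot\delta g$, which lies in $\sI_B^2 = 0$ inside the quotient, yields the Leibniz rule $\delta(fg)=f\,\delta g+g\,\delta f$, so $\delta$ is a $\C$-derivation $\sO_U\to N^*_{B/U}$; since $N^*_{B/U}$ is annihilated by $\sI_B$, this is equivalently an $\sO_B$-linear map $\Omega^1_U|_B\to N^*_{B/U}$. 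The condition $\pi\circ\varphi=\pi|_{(B/U)_1}$ says $\varphi^\sharp$ fixes $\pi^{-1}\sO_Y$, hence $\delta$ vanishes on $\pi^{-1}\sO_Y$ and therefore factors through the sheaf of relative differentials, producing $\bar\delta\in\Hom_{\sO_B}(\Omega^1_{U/Y}|_B,\,N^*_{B/U})$.

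Next I would use that $B=\pi^{-1}(y)$ is a fiber of the smooth morphism $\pi$: this gives the canonical identification $\Omega^1_{U/Y}|_B\cong\Omega^1_B$, and the conormal sheaf $N^*_{B/U}\cong\pi^*\Omega^1_Y|_B$ is a trivial vector bundle on $B$ of rank $\dim Y$. Consequently
\[
\bar\delta\in H^0\bigl(B,\,T_B\otimes N^*_{B/U}\bigr)\cong \Omega^1_{Y,y}\otimes_{\C} H^0(B,T_B) = 0,
\]
by the hypothesis $H^0(B,T_B)=0$. Thus $\delta=0$, i.e.\ $\varphi^\sharp=\mathrm{Id}$ and $\varphi=\mathrm{Id}_{(B/U)_1}$.

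The only real work is the bookkeeping in the second paragraph: verifying that $\delta$ is genuinely a derivation, that the target module structure is the natural $\sO_B$-structure on $N^*_{B/U}$, and that the constraint $\pi\circ\varphi=\pi$ translates exactly to the relative-differentials constraint. These are routine deformation-theoretic identifications and I do not expect any substantive obstacle beyond tracking which ideal quotient is in play; once they are in place, the triviality of $N^*_{B/U}$ provided by smoothness reduces the classifying space to direct summands of $H^0(B,T_B)$ and the lemma follows.
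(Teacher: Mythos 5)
Your proof is correct. It is a self-contained, elementary derivation of the same identification that the paper obtains by citation: the paper observes that $(B/U)_1$ is the fiber product of $\pi$ with $(y/Y)_1\subset Y$ and then invokes Theorem 4.1 of Palamodov's paper on automorphisms of deformations over Artinian bases, which yields an exact sequence identifying the group of such $\varphi$ with $H^0(B,T_B)\otimes I$, $I={\bf m}_y/{\bf m}_y^2$, whence the conclusion. You instead unwind that identification by hand: writing $\delta=\varphi^{\#}-{\rm Id}$, checking the Leibniz rule, and using smoothness of $\pi$ to identify $\Omega^1_{U/Y}|_B\cong\Omega^1_B$ and to trivialize $N^*_{B/U}\cong{\bf m}_y/{\bf m}_y^2\otimes_{\C}\sO_B$, you land in $H^0(B,T_B)\otimes_{\C}{\bf m}_y/{\bf m}_y^2=0$ --- precisely the group appearing in Palamodov's sequence. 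The bookkeeping you flag does go through: $\delta$ kills $\sI_B^2$ automatically by the derivation property, and the condition $\pi\circ\varphi=\pi|_{(B/U)_1}$ is exactly what makes $\delta$ factor through relative differentials. What your route buys is independence from the cited machinery (and it makes visible that properness of $\pi$ plays no role in this lemma); what the paper's route buys is brevity and a statement that sits inside a general framework for higher-order neighborhoods.
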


\begin{proof}
This is immediate from  Theorem 4.1 in \cite{Pa}. More precisely,
let ${\rm Aut}_y((B/U)_1)$ be the group of all automorphisms
 $\varphi: (B/U)_1 \to (B/U)_1$ satisfying $\pi \circ \varphi = \pi|_{(B/U)_1}$ and $\varphi|_B = {\rm Id}_B.$
 Since $(B/U)_1$ is equal to the fiber product   of $\pi$ and the inclusion  $(y/Y)_1 \subset Y$,   the exact sequence of pointed spaces in Theorem 4.1 of \cite{Pa} applied to the inclusion $y \in (y/Y)_1$ reads
$$ 0 \to H^0(B, T_B) \otimes I \to {\rm Aut}_y((B/U)_1) \to \{ {\rm Id}_B \} \to \cdots,$$ where $I$ is the  ideal of $y$ in $(y/Y)_1$. Thus  ${\rm Aut}_y((B/U)_1) = \{{\rm Id}_B \}$ follows from $H^0(B, T_B) =0$.
 \end{proof}

The next lemma is  a reformulation of Propositions 5.7 of \cite{Hir} in our terminology.  Its formal version is Lemma 3.5 of \cite{Hw19}.

\begin{lemma}\label{l.Hir}
 Let $\sK \stackrel{\rho}{\leftarrow} \sU \stackrel{\mu}{\to} X $ and $\widetilde{\sK} \stackrel{\widetilde{\rho}}{\leftarrow} \widetilde{\sU} \stackrel{\widetilde{\mu}}{\to} \widetilde{X}$ be two free families. For $[A] \in \sK$ and $[\widetilde{A}] \in \widetilde{\sK}$,  an isomorphism $\varphi: (A/X)_k \to (\widetilde{A}/\widetilde{X})_k$ induces natural isomorphisms of complex spaces $$\varphi^{\sK}: ([A]/\sK)_k \to ([\widetilde{A}]/\widetilde{\sK})_k$$ and
$$\varphi^{\sU}: (\rho^{-1}([A])/ \sU)_k \to (\widetilde{\rho}^{-1}([\widetilde{A}])/\widetilde{\sU})_k$$
which commute with the restrictions of $\rho, \mu$  on $(\rho^{-1}([A])/ \sU)_k$ and  $\widetilde{\rho}, \widetilde{\mu}$ on $(\widetilde{\rho}^{-1}([\widetilde{A}])/\widetilde{\sU})_k$, respectively. \end{lemma}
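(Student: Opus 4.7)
The plan is to show that both $([A]/\sK)_{k}$ and $(\rho^{-1}([A])/\sU)_{k}$, equipped with the restrictions of $\rho$ and $\mu$, depend functorially only on the complex space $(A/X)_{k}$. Once this is established, $\varphi$ transports these data automatically, producing $\varphi^{\sK}$ and $\varphi^{\sU}$, and compatibility with $\rho,\mu,\widetilde{\rho},\widetilde{\mu}$ comes for free from the universal property of the Douady space.

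The key technical step would be the following claim: for any Artinian local complex space $T$ with closed point $t_0$ and maximal ideal $\mathfrak{m}_T$ satisfying $\mathfrak{m}_T^{k+1}=0$, and any flat deformation $\sU_T\subset T\times X$ of $A$ over $T$, one has $\sU_T\subset T\times (A/X)_{k}$ as subspaces of $T\times X$, i.e.\ the ideal $I_{\sU_T}$ contains $I_A^{k+1}\cdot\sO_{T\times X}$. The proof I have in mind is a short computation: since $\sU_T$ restricts to $A$ at $t_0$, reducing modulo $\mathfrak{m}_T\cdot\sO_{T\times X}$ gives $I_{\sU_T}+\mathfrak{m}_T\cdot\sO_{T\times X}=I_A\cdot\sO_{T\times X}+\mathfrak{m}_T\cdot\sO_{T\times X}$. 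Hence any $f\in I_A$ can be written $f=g+h$ with $g\in I_{\sU_T}$ and $h\in\mathfrak{m}_T\cdot\sO_{T\times X}$. Given $f_1,\dots,f_{k+1}\in I_A$, expanding $\prod(g_i+h_i)$ shows every term with at least one factor $g_i$ lies in $I_{\sU_T}$, while the remaining term $\prod h_i$ lies in $\mathfrak{m}_T^{k+1}\cdot\sO_{T\times X}=0$; thus $\prod f_i\in I_{\sU_T}$.

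Given this claim, the $k$-th order deformation functor of $A$ in $X$, pro-represented by $([A]/\sK)_{k}$ together with the universal family $(\rho^{-1}([A])/\sU)_{k}$, factors through deformations inside $(A/X)_{k}$ and hence is intrinsic to the complex space $(A/X)_{k}$. Here freeness of $A\subset X$ is used to ensure that $\sK$ is smooth at $[A]$ and $\mu$ is submersive near $\rho^{-1}([A])$, so the infinitesimal neighborhoods are Artinian models on which the Douady functor is representable in the expected sense. Transporting this functor across $\varphi$ then yields the two desired induced isomorphisms, with compatibility with $\rho,\mu,\widetilde{\rho},\widetilde{\mu}$ forced by the definition of the universal family. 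The main conceptual subtlety, and where I expect the bookkeeping to be most delicate, is working with the analytic Douady functor on non-reduced bases carefully enough that the universal property applies to these thickenings without ambiguity; this is precisely what Hirschowitz's Proposition 5.7 in \cite{Hir} achieves, and my plan is essentially to recast that argument in the terminology of Definition \ref{d.iso}.
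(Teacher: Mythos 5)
The paper gives no argument for this lemma beyond stating that it is a reformulation of Proposition 5.7 of \cite{Hir} (with formal analogue Lemma 3.5 of \cite{Hw19}), and your sketch is a correct outline of exactly that argument: the computation showing $I_A^{k+1}\cdot\sO_{T\times X}\subset I_{\sU_T}$ whenever $\mathfrak{m}_T^{k+1}=0$ is right, and it makes the Douady functor of deformations of $A$, restricted to Artinian bases with $\mathfrak{m}^{k+1}=0$, intrinsic to the complex space $(A/X)_k$ (with $A$ recovered as its reduction), so that transporting it through $\varphi$ produces $\varphi^{\sK}$ and $\varphi^{\sU}$ together with the stated compatibilities with $\rho,\mu,\widetilde{\rho},\widetilde{\mu}$. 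The one identification worth spelling out is that $(\rho^{-1}([A])/\sU)_k$ really is the restriction $\sU\times_{\sK}([A]/\sK)_k$ of the universal family to the fat point $([A]/\sK)_k$: this holds because $\rho$ is a smooth morphism for a free family (flat with smooth compact fibers), so the ideal of $\rho^{-1}([A])$ in $\sU$ is generated by $\rho^*\mathfrak{m}_{[A]}$ and hence its $(k+1)$-th power cuts out exactly that fiber product --- it is this property of $\rho$, rather than the submersivity of $\mu$ that you invoke, which makes $([A]/\sK)_k$ together with $(\rho^{-1}([A])/\sU)_k$ represent your functor.
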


 For the reader's convenience, we provide the proof of the following elementary fact.

\begin{lemma}\label{l.tangent}
Let $B \subset U$ and $A \subset X$ be compact submanifolds of complex manifolds.
  A morphism of complex spaces $\psi: (B/U)_1 \to (A/X)_1$ induces, for each point $u \in B$ and its set-theoretic image $x = \psi(u) \in A$, a natural homomorphism ${\rm d}_u \psi: T_{U,u} \to T_{X, x},$ which coincides with the usual derivative of a map when $\psi$ comes from a holomorphic map $(B/U)_{\sO} \to (A/X)_{\sO}$. \end{lemma}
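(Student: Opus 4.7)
The plan is to construct $d_u\psi$ as the natural map on Zariski tangent spaces induced by $\psi$, after identifying the Zariski tangent space of $(B/U)_1$ at $u$ with $T_{U,u}$ (and similarly at $x$). The key observation is that passing from $U$ to its first-order infinitesimal thickening along $B$ does not shrink the Zariski tangent space at a point of $B$, so a morphism of the first-order neighborhoods sees all ambient tangent directions.

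First I would verify the identification of Zariski tangent spaces. The stalk of the structure sheaf of the complex space $(B/U)_1$ at $u$ is $R := \sO_{U,u}/\sI_{B,u}^2$, with maximal ideal $\mathfrak{n} := \fm_{U,u}/\sI_{B,u}^2$. Since $u \in B$ one has $\sI_{B,u} \subset \fm_{U,u}$ and therefore $\sI_{B,u}^2 \subset \fm_{U,u}^2$, so $\mathfrak{n}^2 = \fm_{U,u}^2/\sI_{B,u}^2$ and the cotangent space $\mathfrak{n}/\mathfrak{n}^2$ is canonically isomorphic to $\fm_{U,u}/\fm_{U,u}^2$; dualizing gives the identification of the Zariski tangent space of $(B/U)_1$ at $u$ with $T_{U,u}$. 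The analogous identification at $x$ yields $T_{X,x}$.

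Next I would apply the standard functorial construction of the tangent map: the morphism $\psi$ is given locally by a local-ring homomorphism $\psi^{\#}: \sO_{(A/X)_1,x} \to \sO_{(B/U)_1,u}$ sending maximal ideal to maximal ideal, which therefore descends to a $\C$-linear map on cotangent spaces; dualizing this and using the identifications above produces the desired $d_u\psi: T_{U,u} \to T_{X,x}$. For the compatibility statement, if $\psi$ comes from a holomorphic map $\Psi:(B/U)_{\sO}\to(A/X)_{\sO}$ of open neighborhoods, then the pullback $\Psi^{\#}:\sO_{X,x}\to\sO_{U,u}$ reduces modulo $\sI_{A,x}^2$ and $\sI_{B,u}^2$ to $\psi^{\#}$, so the induced map $\fm_{X,x}/\fm_{X,x}^2 \to \fm_{U,u}/\fm_{U,u}^2$ is exactly the algebraic description of the classical differential $d\Psi|_u$, and $d_u\psi$ agrees with the usual derivative.

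I do not expect any genuine obstacle; the only substantive point is the identification of Zariski tangent spaces in the first step, and after that the construction is the standard tangent functor applied to a morphism of complex spaces.
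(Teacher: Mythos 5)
Your proposal is correct and follows essentially the same route as the paper: both exploit the inclusion $\sI_{B,u}^2 \subset {\bf m}_{U,u}^2$ (and likewise at $x$) so that the cotangent space of the first infinitesimal neighborhood at a point of $B$ is canonically ${\bf m}_{U,u}/{\bf m}_{U,u}^2$, and then dualize the map induced by $\psi^{\#}$ on cotangent spaces. Your explicit check of the compatibility with the usual derivative is a harmless elaboration of what the paper leaves implicit.
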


\begin{proof}
The homomorphism of the stalks associated to $\psi$
$$\psi^{\#}: \sO_{X,x}/\sI_{A,x}^2 \to \sO_{U,u}/\sI_{B,u}^2$$ induces a homomorphism
${\bf m}_{X,x}/\sI_{A,x}^2 \to {\bf m}_{U,u}/\sI_{B,u}^2$ for the maximal ideals
${\bf m}_{X,x} \subset \sO_{X,x}$ and ${\bf m}_{U,u} \subset \sO_{U,u}$.
 By the inclusions $\sI_{A,x}^2 \subset {\bf m}_{X,x}^2$ and $\sI_{B,u}^2 \subset {\bf m}_{U,u}^2,$ we have the induced homomorphism
 ${\bf m}_{X,x}/{\bf m}_{X,x}^2 \to {\bf m}_{U,u}/{\bf m}_{U,u}^2$ whose dual defines the homomorphism ${\rm d}_u \psi$. \end{proof}

\begin{proof}[Proof of Theorem \ref{t.2g}]
Assume that $\sK$ and $\widetilde{\sK}$ are free families that are iso-equivalent up to order $1$.
Shrinking $\sK$ and $\widetilde{\sK}$ if necessary, we may assume that $H^0(A, T_A) =0$ for every $[A] \in \sK$ and we have  biholomorphic maps $$f : \sK \to \widetilde{\sK}, \  F_0: \sU \to \widetilde{\sU} \mbox{ and }F_1: \sU_1 \to \widetilde{\sU}_{1}$$ satisfying the properties of Definition \ref{d.iso} (2).

For each $[A]\in \sK$, the map $F_1$ defines an isomorphism $$ F_{1,A}: (A/X)_1 \to (\widetilde{A}/\widetilde{X})_1,$$ where $\widetilde{A} = \widetilde{\mu} \circ F_0 (\rho^{-1}([A])).$ By Lemma \ref{l.Hir}, this induces an isomorphism
$$F_{1,A}^{\sU}: (\rho^{-1}([A])/ \sU)_1 \to (\widetilde{\rho}^{-1}([\widetilde{A}])/\widetilde{\sU})_1.$$
As $F_{1,A}^{\sU}$ commutes with the restrictions  $\mu|_{(\rho^{-1}([A])/ \sU)_1}$ and $\widetilde{\mu}|_{(\widetilde{\rho}^{-1}([\widetilde{A}])/\widetilde{\sU})_1}$ as described in Lemma \ref{l.Hir}, we see that for any $u \in \rho^{-1}([A])$, the homomorphism from Lemma \ref{l.tangent} $${\rm d}_u F_{1,A}^{\sU}: T_{\sU,u}
\to T_{\widetilde{\sU}, \widetilde{u}}, \ \widetilde{u} = F_0(u)$$
sends ${\rm Ker} ({\rm d}_u \mu)$ to ${\rm Ker} ({\rm d}_{\widetilde{u}} \widetilde{\mu})$.

On the other hand, the map $F_0$ induces
$$F_0^A: (\rho^{-1}([A])/ \sU)_1 \to (\widetilde{\rho}^{-1}([\widetilde{A}])/\widetilde{\sU})_1$$ such that  the derivative induced by $F_0^A$ in Lemma \ref{l.tangent} coincides with the derivative of $F_0$  $${\rm d}_u F^A_0 = {\rm d}_u F_0: T_{\sU, u} \to T_{\widetilde{\sU}, \widetilde{u}}$$ for any $u \in \rho^{-1}([A])$ and $\widetilde{u} = F_0(u)$.
By the assumption $H^0(A, T_A) =0$ and Lemma \ref{l.Har}, we have the equality $F_0^A = F_{1,A}^{\sU}$. Thus we see that ${\rm d}_u F_0$ sends ${\rm Ker}( {\rm d}_u \mu)$ to ${\rm Ker} ({\rm d}_{\widetilde{u}} \widetilde{\mu})$  for each $u \in \sU$.
It follows that $F_0$ sends fibers of $\mu$ to fibers of $\widetilde{\mu}$. Then $F_0$ descends to $(A/X)_{\sO} \cong (\widetilde{A}/\widetilde{X})_{\sO}$ for any $[A] \in \sK$. It follows that $\sK$ and $\widetilde{\sK}$ are germ-equivalent.
\end{proof}

\section{An approach to Question \ref{q.K3} via Theorem \ref{t.2}}

In this section, we discuss an application of Theorem \ref{t.2} on Question \ref{q.K3}.
Firstly,  recall the following well-known result.

\begin{proposition}\label{p.Huy}
Let $X \subset \BP^g, g>2,$ be a linearly normal projective K3 surface. Then a general hyperplane section $A \subset X$ satisfies \begin{itemize}\item[(1)] $A$ is a nonsingular curve of genus $g$ with $N_{A/X}$ isomorphic to the canonical bundle $K_A$; \item[(2)] $A$  is not hyperelliptic; and \item[(3)] $A$ is a free submanifold of $X.$ \end{itemize} In particular, we have a free family $\sK \subset {\rm Douady}(X)$ of nonhyperelliptic  hyperplane sections  of genus $g$. \end{proposition}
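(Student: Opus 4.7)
The plan is to verify the three listed properties separately, each reducing to a standard computation on a K3 surface. Let $H$ denote the hyperplane class of $X \subset \BP^g$.

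For (1), I would apply Bertini to take $A$ smooth and irreducible, and adjunction together with the triviality $K_X \cong \sO_X$ to identify $N_{A/X} \cong K_A$. To pin down the genus of $A$, I would combine the linear normality hypothesis $h^0(\sO_X(H)) = g+1$ with Riemann--Roch on the K3 surface, $\chi(\sO_X(H)) = 2 + H^2/2$, and Kodaira vanishing for the ample bundle $\sO_X(H)$; this forces $H^2 = 2g-2$, and adjunction then yields arithmetic genus $g$.

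For (2), I would exploit the short exact sequence
\[ 0 \to \sO_X \to \sO_X(H) \to K_A \to 0 \]
attached to $A \in |H|$. Since $h^1(X, \sO_X) = 0$, the restriction $H^0(X, \sO_X(H)) \to H^0(A, K_A)$ is surjective with one-dimensional kernel, and its image therefore has dimension $g$, filling the whole of $H^0(A, K_A)$. Thus $A$ is embedded in the hyperplane $\BP^{g-1} \subset \BP^g$ containing it by the complete canonical linear system $|K_A|$; since the canonical map of a hyperelliptic curve of genus $g \geq 3$ is two-to-one onto a rational normal curve and hence not an embedding, $A$ cannot be hyperelliptic.

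For (3), I would use the characterization of freeness recalled in Definition \ref{d.free}: one must verify that $\mathrm{Douady}(X)$ is smooth at $[A]$ and that $\mu$ is submersive in a neighborhood of $\rho^{-1}([A])$. Smoothness follows because the Zariski tangent space $H^0(A, N_{A/X}) = H^0(A, K_A)$ has dimension $g$, matching the dimension of the complete linear system $|H|$, which sits as an open subset of $\mathrm{Douady}(X)$ through $[A]$. Submersivity at $u \in \rho^{-1}([A])$ amounts to surjectivity of the evaluation $H^0(A, K_A) \to K_A|_{x}$ at $x = \mu(u)$, i.e., to base-point-freeness of $K_A$, which is automatic for any smooth curve of genus at least two. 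Since all three properties are open conditions on $[A] \in |H|$, restricting to their common locus of validity produces the required free family $\sK \subset \mathrm{Douady}(X)$. I do not expect any serious obstacle; the only mild care needed is to note that $\sO_X(H)$ is ample (as the restriction of $\sO_{\BP^g}(1)$) so that Kodaira vanishing applies, and that the hypothesis $g > 2$ of the proposition aligns with the threshold $g \geq 3$ needed to deduce non-hyperellipticity.
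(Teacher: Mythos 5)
Your proof is correct and takes essentially the same route as the paper, which handles (1) by adjunction, (2) by citing Huybrechts' remark that hyperplane sections of K3 surfaces are canonically embedded curves, and (3) by the dimension count $\dim H^0(A, N_{A/X}) = \dim H^0(A, K_A) = g$. You simply supply the standard details the paper delegates to references: Riemann--Roch plus Kodaira vanishing to get $H^2 = 2g-2$ and genus $g$, the restriction sequence $0 \to \sO_X \to \sO_X(H) \to K_A \to 0$ with $h^1(\sO_X)=0$ to see that $A$ is embedded by the complete canonical system (hence non-hyperelliptic), and base-point-freeness of $K_A$ for the submersivity of $\mu$.
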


\begin{proof}
(1) is an easy consequence of the adjunction formula. (2) is from
 Remark 2.4 (ii) of \cite{Huy}.  (3) follows from $$\dim H^0(A, N_{A/Y}) = \dim H^0(A, K_A) =  g.$$   \end{proof}

By Proposition \ref{p.Huy}, we can restate Question \ref{q.K3} in terms of Definition \ref{d.iso} as follows.

\begin{question}\label{q.K3g}
Let $X$ and $\widetilde{X}$ be two linearly normal projective K3 surfaces in $\BP^g, g>2.$  Suppose the free families $\sK$ and $\widetilde{\sK}$ of nonhyperelliptic hyperplane sections of $X$ and $\widetilde{X} \subset \BP^g$ are  iso-equivalent up to order 0. Are $X$ and $\widetilde{X}$ isomorphic by a projective transformation of $\BP^g$? \end{question}

As an application of Theorem \ref{t.2}, we obtain the following reduction of the problem.

\begin{proposition}\label{p.K3}
 If the free families $\sK$ and $\widetilde{\sK}$  in Question  \ref{q.K3g} are iso-equivalent up to order 1, then $X$ and $\widetilde{X}$ are isomorphic by a projective transformation of $\BP^g$ \end{proposition}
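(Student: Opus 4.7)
The plan is to apply Theorem~\ref{t.2g} to $\sK$ to obtain a biholomorphism between Euclidean open subsets of $X$ and $\widetilde X$ that respects hyperplane sections, then to extend it to a global isomorphism $X\to\widetilde X$ using projectivity and the K3 structure, and finally to promote it to a projective transformation of $\BP^g$ via complete linear systems.

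First, I verify that Theorem~\ref{t.2g} applies. By Proposition~\ref{p.Huy}, a member $A\in\sK$ is a nonsingular nonhyperelliptic curve of genus $g>2$, so $T_A\cong K_A^{-1}$ has negative degree and $H^0(A,T_A)=0$. Inspecting the proof of Theorem~\ref{t.2g}, the map $F_0$ descends through the universal family projections $\mu$ and $\widetilde\mu$ to a biholomorphism $\phi:\Omega\to\widetilde\Omega$ between the Euclidean open subsets $\Omega=\mu(\rho^{-1}(\sW))\subset X$ and $\widetilde\Omega=\widetilde\mu(\widetilde\rho^{-1}(\widetilde\sW))\subset\widetilde X$, carrying each hyperplane section $A\subset\Omega$ with $[A]\in\sW$ biholomorphically onto the hyperplane section $f([A])$ of $\widetilde X$.

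Next, I extend $\phi$ to an isomorphism $\phi:X\to\widetilde X$. Since $X$ and $\widetilde X$ are projective, the graph of $\phi$ has finite volume in $X\times\widetilde X$ with respect to any K\"ahler metric, so Bishop's extension theorem provides a closed analytic extension $\Gamma\subset X\times\widetilde X$, which by Chow's theorem is algebraic. The two projections $\Gamma\to X$ and $\Gamma\to\widetilde X$ are birational morphisms of smooth projective surfaces; since $X$ and $\widetilde X$ are K3 surfaces, containing no $(-1)$-curves, both projections are isomorphisms, whence $\phi$ extends to a biholomorphism $X\to\widetilde X$.

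Finally, because the extended $\phi$ still sends every hyperplane section associated to $[A]\in\sW$ to a hyperplane section of $\widetilde X$, one has $\phi^*\sO_{\widetilde X}(1)\cong\sO_X(1)$, and by linear normality $\phi^*$ induces an isomorphism of the $(g+1)$-dimensional spaces $H^0(\widetilde X,\sO_{\widetilde X}(1))\to H^0(X,\sO_X(1))$. Choosing bases compatibly with the embeddings, dualizing this isomorphism yields a projective transformation $\Phi\in\mathrm{PGL}_{g+1}$ with $\Phi(X)=\widetilde X$, as desired. The main obstacle in this plan is the globalization step: Theorem~\ref{t.2g} only produces local Euclidean data, and upgrading it to a morphism of the full surfaces relies essentially on compactness, Bishop's theorem, and K3 minimality; once $\phi$ is global, the passage to $\mathrm{PGL}_{g+1}$ is routine.
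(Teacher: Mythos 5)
Your reduction via Theorem \ref{t.2g} is fine (indeed $T_A\cong K_A^{-1}$ has negative degree since $g>2$, so $H^0(A,T_A)=0$), and your last two steps (a birational map between K3 surfaces is biregular by minimality; $\Phi(A)=\widetilde A$ plus linear normality gives a transformation in ${\rm PGL}_{g+1}$) are essentially the paper's. The genuine gap is the globalization step. You claim that the graph of $\phi$ has finite volume in $X\times\widetilde X$ ``since $X$ and $\widetilde X$ are projective'' and then invoke Bishop's theorem. Projectivity of the ambient surfaces does not give finite volume of the graph of an arbitrary biholomorphism between Euclidean open subsets: the mixed term $\int_\Omega\omega_X\wedge\phi^*\omega_{\widetilde X}$ need not be finite, and without further input the desired conclusion is simply false --- for instance, $(x,y)\mapsto(x,y+e^x)$ restricted to a small ball is a biholomorphism between open subsets of $\BP^2$ that extends to no birational map, so the closure of its graph is not analytic. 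Any correct argument must use the positivity of $A$ inside the K3 surface (it is an ample divisor, carried to an ample divisor), not merely the projectivity of $X$ and $\widetilde X$.

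The paper handles exactly this point by quoting Corollary V.2.3 of Hartshorne's \emph{Ample subvarieties of algebraic varieties}: the germ-equivalence furnishes $\Phi:U\to\widetilde U$ with $\Phi(A)=\widetilde A$ for an ample curve $A$, and that positivity is what allows the analytic isomorphism of neighborhoods to extend to a birational map from $X$ to $\widetilde X$; minimality and linear normality then finish as you say. To repair your version you would either have to cite such an extension theorem for neighborhoods of ample subvarieties, or actually prove the finite-volume bound --- e.g.\ by exploiting that $\phi$ sends the curves of the family $\sW$ to hyperplane sections of $\widetilde X$ of fixed degree and bounding $\int_\Omega\omega_X\wedge\phi^*\omega_{\widetilde X}$ by integrating over the family --- but that is an additional argument your proposal does not supply.
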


\begin{proof}
By  Theorem \ref{t.2g} and Propositions \ref{p.Huy}, we see that $\sK$ and $\widetilde{\sK}$ are germ-equivalent. So there exist Euclidean neighborhoods of some hyperplane sections $A \subset U \subset X$
and $\widetilde{A} \subset \widetilde{U} \subset \widetilde{X}$ with a biholomorphic map $\Phi: U \to \widetilde{U}$ satisfying $\Phi(A) = \widetilde{A}$. Then $\Phi$ can be extended to a birational map $\Psi$ from $X$ to $\widetilde{X}$ by Corollary V.2.3 of \cite{Ha70}. By the uniqueness of minimal models, the birational map $\Psi$ is biregular. From $\Phi(A) = \widetilde{A}$ and the linear normality of $X, \widetilde{X} \subset \BP^g$, the biregular morphism $\Psi$ comes from a projective transformation of $\BP^g$. \end{proof}

The main issue is how to check that $\sK$ and $\widetilde{\sK}$ are iso-equivalent up to order 1.
Shrinking $\sK$ and $\widetilde{\sK}$ if necessary, we can assume that there is a biholomorphic map $f: \sK \to \widetilde{\sK}$  such that for each   $[A] \in \sK$ and $[\widetilde{A}] = f([A]) \in \widetilde{\sK},$ the two curves $A$ and $\widetilde{A}$ are biholomorphic.
We can give the following partial answer to the problem.

\begin{proposition}\label{p.partial}
In the above setting, suppose the following diagram of Kodaira-Spencer maps $\kappa_A$ and $\kappa_{\widetilde{A}}$
$$\begin{array}{ccc}   H^0(A, K_A) = H^0(A, N_{A/X})  & \stackrel{\kappa_A}{\longrightarrow} &  H^1(A, T_A) \\
\| & &  \| \\ H^0(\widetilde{A}, K_{\widetilde{A}}) = H^0(\widetilde{A}, N_{\widetilde{A}/\widetilde{X}}) & \stackrel{\kappa_{\widetilde{A}}}{\longrightarrow} & H^1(\widetilde{A}, T_{\widetilde{A}})  \end{array} $$
commutes for each $A$. Then $\sK$ and $\widetilde{\sK}$ are iso-equivalent up to order 1.\end{proposition}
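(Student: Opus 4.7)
The strategy is to show that the hypothesis forces the extension class classifying $(A/X)_1$ to coincide with the one classifying $(\widetilde{A}/\widetilde{X})_1$ for each $[A]\in\sK$, and then to assemble these fiberwise isomorphisms into the desired family map $F_1$. As recalled in the proof of Proposition \ref{p.mrc}, the first infinitesimal neighborhood $(A/X)_1$, viewed as a thickening of $A$ with prescribed normal bundle $N_{A/X}$, is classified by the extension class $e_A\in H^1(A,T_A\otimes N^*_{A/X})$ of the sequence $0\to T_A\to T_X|_A\to N_{A/X}\to 0$. Since $N_{A/X}\cong K_A$ and $T_A=K_A^{-1}$ by Proposition \ref{p.Huy}(1), we have $e_A\in H^1(A,K_A^{-2})$, and similarly $e_{\widetilde{A}}\in H^1(\widetilde{A},K_{\widetilde{A}}^{-2})$.

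The Kodaira--Spencer map $\kappa_A$ is the connecting homomorphism of the normal bundle sequence, so it equals cup product with $e_A$: $\kappa_A(s)=e_A\cup s$ for $s\in H^0(A,K_A)=H^0(A,N_{A/X})$. The hypothesis of commuting Kodaira--Spencer diagrams therefore says that $e_A\cup\cdot$ and $e_{\widetilde{A}}\cup\cdot$ agree as maps $H^0(A,K_A)\to H^1(A,K_A^{-1})$ under the canonical identifications. To deduce $e_A=e_{\widetilde{A}}$, I would invoke Max Noether's theorem: the cup-product pairing
$$H^1(A,K_A^{-2})\otimes H^0(A,K_A)\to H^1(A,K_A^{-1})$$
is, by Serre duality, the transpose of the multiplication map
$$H^0(A,K_A)\otimes H^0(A,K_A^2)\to H^0(A,K_A^3),$$
which is surjective for nonhyperelliptic curves of genus $g\geq 3$. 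Hence the pairing is nondegenerate in the first factor, so $e_A=e_{\widetilde{A}}$, yielding an isomorphism $(A/X)_1\cong(\widetilde{A}/\widetilde{X})_1$ extending the identifications of $A$ with $\widetilde{A}$ and of $N_{A/X}$ with $N_{\widetilde{A}/\widetilde{X}}$. Such an isomorphism is moreover unique: the group of automorphisms of $(A/X)_1$ fixing $A$ identifies with the derivations $H^0(A,T_A\otimes N^*_{A/X})=H^0(A,K_A^{-2})=0$.

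The remaining step, which I expect to be the main technical obstacle, is to upgrade these canonically determined fiberwise isomorphisms to a genuine family morphism $F_1:\sU_1\to\widetilde{\sU}_1$ lifting $F_0$ and satisfying the compatibilities of Definition \ref{d.iso}(2). My plan is to view $\sU_1\subset\sK\times X$ as a first-order thickening of $\sU$ classified by a relative extension class whose restriction to each fiber $A$ is $e_A$, and similarly for $\widetilde{\sU}_1$ in $\widetilde{\sK}\times\widetilde{X}$. The pointwise equality $e_A=e_{\widetilde{A}}$ established above, combined with the vanishing of $H^0(A,K_A^{-2})$ (which forces the relative automorphism sheaf to vanish and so makes the fiberwise isomorphisms canonical), should ensure that the unique fiberwise isomorphisms patch into a single $F_1$ over $F_0$. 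Once $F_1$ is constructed, the conditions $F_0=F_1\circ j^0_1$ and $\widetilde{\rho}_1\circ F_1=f\circ\rho_1$ follow from the construction, yielding the required iso-equivalence up to order~$1$.
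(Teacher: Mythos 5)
Your argument is essentially the paper's: the paper reduces the statement to showing that the extension classes of $0\to T_A\to T_X|_A\to N_{A/X}\to 0$ and of its counterpart for $\widetilde{A}$ coincide (up to scalar), and proves this exactly as you do, via the Beauville--Merindol diagram, i.e.\ Serre duality identifying the cup product with the transpose of the multiplication map $H^0(A,K_A)\otimes H^0(A,K_A^{\otimes 2})\to H^0(A,K_A^{\otimes 3})$, whose surjectivity for nonhyperelliptic curves (Max Noether) gives the needed nondegeneracy -- your version even yields exact equality $e_A=e_{\widetilde{A}}$, which is stronger than required. Your final paragraph on assembling the fiberwise isomorphisms into a family map $F_1$ using $H^0(A,T_A\otimes N^*_{A/X})=H^0(A,K_A^{\otimes -2})=0$ is a reasonable and correctly motivated supplement to a point the paper's proof leaves implicit, since the paper stops at the fiberwise isomorphism $(A/X)_1\cong(\widetilde{A}/\widetilde{X})_1$.
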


\begin{proof}
We need to show that the first infinitesimal neighborhoods $(A/X)_1$ and $(\widetilde{A}/\widetilde{X})_1$ are isomorphic. Recall that the first infinitesimal neighborhood of a submanifold $A \subset X$ is determined by the extension class of $$0 \to T_A \to T_X|_A \to N_{A/X} \to 0$$ up to nonzero scalar multiplications (as already mentioned in the proof of Proposition \ref{p.mrc}).
 Thus it suffices to prove that the vector bundle $T_X|_A$ and $T_{\widetilde{X}}|_{\widetilde{A}}$ are isomorphic (up to a nonzero scalar multiplication)
as extensions of  $K_A \cong K_{\widetilde{A}}$ by $T_A \cong T_{\widetilde{A}}$. This follows from the next lemma. \end{proof}

\begin{lemma}\label{l.BM}
Let $A \subset X$ be a nonhyperelliptic nonsingular hyperplane section of a linearly normal K3 surface $X \subset \BP^g$. Then the Kodaira-Spencer map $H^0(A, N_{A/X}) \to H^1(A, T_A)$ of the hyperplane sections determines the extension class of $$(\dagger) \ \ 0 \to T_A \to T_X|_A \to N_{A/X} \to 0$$ up to  nonzero scalar multiplications. \end{lemma}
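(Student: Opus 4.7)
The plan is to express the Kodaira--Spencer map $\kappa_A$ as cup product with the extension class of $(\dagger)$, then apply Serre duality to identify its transpose with the tri-canonical multiplication map on $A$, thereby reducing the desired uniqueness (up to nonzero scalar) to Max Noether's theorem on the generation of the canonical ring.

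First, by the adjunction $N_{A/X} \cong K_A$ together with $T_A \cong K_A^{-1}$, the extension class $e$ of $(\dagger)$ lies in $\mathrm{Ext}^1(N_{A/X}, T_A) = H^1(A, K_A^{-2})$, which is Serre-dual to $H^0(A, K_A^{\otimes 3})$. Standard deformation theory for the Douady family identifies $\kappa_A$ with the connecting homomorphism of $(\dagger)$, so that $\kappa_A(s) = e \cup s$ for every $s \in H^0(A, K_A)$. Since the identification $N_{A/X} \cong K_A$ is only well-defined up to a scalar, determining $e$ up to nonzero scalar from $\kappa_A$ is equivalent to the injectivity of
\[
\Phi \colon H^1(A, K_A^{-2}) \longrightarrow \Hom\bigl(H^0(A, K_A),\, H^1(A, K_A^{-1})\bigr), \qquad e \longmapsto (s \mapsto e \cup s).
\]

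Next, I would dualize. Under the Serre duality identifications $H^1(A, K_A^{-1}) \cong H^0(A, K_A^{\otimes 2})^*$ and $H^1(A, K_A^{-2}) \cong H^0(A, K_A^{\otimes 3})^*$, the transpose of $\Phi$ is naturally identified with the canonical multiplication map
\[
m \colon H^0(A, K_A) \otimes H^0(A, K_A^{\otimes 2}) \longrightarrow H^0(A, K_A^{\otimes 3}).
\]
Consequently $\Phi$ is injective if and only if $m$ is surjective.

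The decisive step is to invoke Max Noether's theorem: since $A$ is nonhyperelliptic of genus $g \geq 3$ (as $g > 2$), the canonical ring $\bigoplus_{n \geq 0} H^0(A, K_A^{\otimes n})$ is generated in degree one, and in particular $m$ is surjective. This yields the injectivity of $\Phi$ and hence the lemma. I expect this single application of Max Noether's theorem to be the only substantive input; the remainder is a routine unwinding of cup product and Serre duality, once the relevant cohomology groups have been correctly identified via adjunction.
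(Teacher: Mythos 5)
Your proposal is correct and follows essentially the same route as the paper, which (following Beauville--M\'erindol) identifies the Kodaira--Spencer map with the connecting map of $(\dagger)$, i.e.\ cup product with the extension class, and uses Serre duality to transpose the problem to the multiplication map $H^0(A,K_A)\otimes H^0(A,K_A^{\otimes 2})\to H^0(A,K_A^{\otimes 3})$, whose surjectivity for nonhyperelliptic $A$ (Max Noether) gives the conclusion. Your formulation via injectivity of the cup-product map is just the dual phrasing of the paper's argument that the kernel of the Serre-dual functional $e^*$ is recovered from $\partial$, so the two proofs coincide in substance.
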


\begin{proof}
 This is essentially contained in the proof of Lemma 1 of \cite{BM}.
 The boundary homomorphism $$\partial: H^0(A, K_A) = H^0(A, N_{A/X}) \to H^1(A, T_A)$$ of the exact sequence $(\dagger)$ is the Kodaira-Spencer map. Let $$\partial^*: H^0(A, K_A^{\otimes 2}) \to H^1(A, \sO_A)$$ be the dual of $\partial$.
 Let $e^*: H^0(A, K_A^{\otimes 3}) \to \C$ be the Serre dual of the extension class $e \in H^1(A, T_A^{\otimes 2})$ of $(\dagger)$.
 As explained in the proof of Lemma 1 of \cite{BM}, the following diagram, where $\alpha$ is the multiplication map and $\beta$ is the Serre duality pairing, commutes up to sign.
 $$\begin{array}{ccc}   H^0(A, K_A) \otimes H^0(A, K_A^{\otimes 2}) & \stackrel{{\rm Id} \otimes \partial^*}{\longrightarrow} & H^0(A, K_A) \otimes H^1(A, \sO_A) \\
\downarrow \alpha & &  \downarrow \beta \\ H^0(A, K_A^{\otimes 3}) & \stackrel{e^*}{\longrightarrow} & \C.  \end{array} $$  As $A$ is not hyperelliptic, the multiplication map
$\alpha$ is surjective.  Thus the Kodaira-Spencer homomorphism $\partial$ determines the extension class $e$ up to nonzero scalar multiplications by $$ \alpha^{-1}({\rm Ker}(e^*)) = {\rm Ker}(\beta \circ ({\rm Id} \otimes \partial^*)).$$
 \end{proof}

Under the assumption of Question \ref{q.K3g}, we know that the two maps $\kappa_A$ and $\kappa_{\widetilde{A}}$ have the same images in $H^1(A, T_A) = H^1(\widetilde{A}, T_{\widetilde{A}})$. But this does not imply that the diagram of Proposition \ref{p.partial} commutes. One potential approach is to answer the following question.

\begin{question}\label{q.open}
Let $A$ be a nonhyperelliptic curve (one may assume that it is a hyperplane section of a K3 surface $X \subset \BP^g$). For two elements $e, \widetilde{e} \in H^1(A, T_A^{\otimes 2})$,  let $$e^+, \widetilde{e}^+ :H^0(A, K_A ^{\otimes 2}) \to H^1(A, \sO_A)$$
be the cup product maps. If ${\rm Im}(e^+) = {\rm Im}(\widetilde{e}^+)$, is $e= \widetilde{e}$? \end{question}

The diagram in the proof of Lemma \ref{l.BM} shows that this is a question on the canonical ring of $A$.

\section{Relation with projective submanifolds having contact up to order $k$}

We recall the following notions in projective differential geometry (\cite{Gr74}, \cite{Jen}, \cite{Ve}).

\begin{definition}\label{d.contact}
Let $S$ and $\widetilde{S}$ be two (not necessarily closed) submanifolds of the same dimension $m$ in the $n$-dimensional projective space $\BP^n$.
 \begin{itemize} \item[(i)] We say that $S$ and $\widetilde{S}$ {\em agree up to order} 1 at $x \in S \cap \widetilde{S}$ if they have the same projective tangent space at $x$ in $\BP^n$. In this case, we can choose an inhomogeneous coordinate system $(w_1, \ldots, w_n)$ on $\BP^n$ centered at $x$ such that the common projective tangent space of $S$ and $\widetilde{S}$ at $x$ is given by
 $w_{m +1} = w_{m+2} = \cdots = w_n =0$ and the germ of $S$  (resp. $\widetilde{S}$) at $x$ is given by the equations
$$ \{ w_i = F_i( w_1, \ldots, w_m), \ m+1 \leq i \leq n\} $$ $$ \mbox{ ( resp. } \{ w_i = \widetilde{F}_i (w_1, \ldots, w_m), \ m+1 \leq i \leq n \} \mbox{ )}$$ for some convergent power series $F_i$ and $\widetilde{F}_i$ in  $w_1, \ldots, w_m$.
 \item[(ii)] For a positive integer $k \geq 2$, we say that $S$ and $\widetilde{S}$ {\em agree up to order} $k$ at $x \in S \cap \widetilde{S}$ if they agree up to order 1 and in terms of the coordinates in (i), the power series $F_i$ and $\widetilde{F}_i$ are equal up to terms of degree $k$.
     \item[(iii)]
 For a positive integer
 $k$, we say that $S$ and $\widetilde{S}$ have {\em contact up to order} $k$
 if there exist a biholomorphic map $h: S_o \to \widetilde{S}_o$ between nonempty connected open subsets $S_o \subset S$ and $\widetilde{S}_o \subset \widetilde{S}$,   and a holomorphic map
$g: S_o \to {\rm PGL}(n+1)$ such that for each point $s \in S_o$, the projective transformation $g(s) \in {\rm PGL}(n+1)$ sends $h(s) \in \BP^n$ to $s$ and the two submanifolds $S$ and $g(s) \cdot \widetilde{S}$ of $\BP^n$  agree up to order $k$ at $s= g(s) \cdot h(s)$. \end{itemize} \end{definition}

We recall the following theorem  (Theorem 3 in p. 32 of \cite{Jen}, Th\'eor\'eme de congruence  in p. 514 of \cite{Ve}, answering Griffiths' question in Section 5 of \cite{Gr74}).

\begin{theorem}\label{t.JeVe}
There exists a positive integer $\ell$ (depending on the dimension $n$) such that
if two  submanifolds $S, \widetilde{S} \subset \BP^n$ have  contact of order $\ell$, then  there exists $\gamma \in {\rm PGL}(n+1)$ such that $S \cap \gamma(\widetilde{S})$ contains a nonempty open subset  in $S$. \end{theorem}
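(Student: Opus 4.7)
The plan is to reformulate the contact condition in the language of jet bundles and apply Cartan's method of equivalence, in the same spirit as Theorem \ref{t.1}. For each $k$, let $J^k_{m,n}$ denote the complex manifold of $k$-jets of $m$-dimensional submanifolds of $\BP^n$; the group ${\rm PGL}(n+1)$ acts on $J^k_{m,n}$ via its action on $\BP^n$ together with changes of base point. The contact hypothesis translates to a holomorphic map $g : S_o \to {\rm PGL}(n+1)$ with $g(s)\cdot j^k_{h(s)}(\widetilde S) = j^k_s(S)$ for every $s \in S_o$, and the goal is to show that, for $k = \ell(n)$ sufficiently large, such a $g$ must coincide on a neighborhood of some point with a constant $\gamma \in {\rm PGL}(n+1)$.

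First I would show that there exists $\ell_0 = \ell_0(n, m)$ such that the stabilizer in ${\rm PGL}(n+1)$ of a generic $\ell_0$-jet in $J^{\ell_0}_{m,n}$ is trivial. This is the analogue of the termination of prolongation in Cartan's equivalence method: the Lie algebras of the ${\rm PGL}(n+1)$-stabilizers of the successive jet levels form a decreasing chain of subalgebras of $\mathfrak{pgl}(n+1)$, which must stabilize at some finite level by an effective Hilbert-basis / Spencer $\delta$-Poincar\'e argument in the vein of \cite{Mor} -- precisely the type of input used in Theorem \ref{t.G} of the paper. Since the fiberwise stabilizer is a Zariski-closed subgroup of ${\rm PGL}(n+1)$ and we may pass to a suitably generic submanifold jet, the limit subgroup is trivial.

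Setting $\ell := \ell_0 + 1$ and assuming $S$ and $\widetilde S$ have contact to order $\ell$, uniqueness of $g$ is then immediate from the first step. The extra order of contact can then be used to differentiate: unpacking the $(\ell_0 + 1)$-jet contact condition infinitesimally should show that the pullback $g^{*} \omega_{{\rm PGL}(n+1)}$ of the Maurer-Cartan form vanishes identically on $S_o$, so that $g$ is locally constant. Taking $\gamma := g(s_0)$ for some $s_0 \in S_o$, the submanifold $\gamma(\widetilde S)$ then has the same $\ell_0$-jet as $S$ at every point of $S_o$; by analyticity $S$ and $\gamma(\widetilde S)$ coincide in a full-dimensional open subset of a neighborhood of $s_0$ in $S$.

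The hard part will be producing the effective bound $\ell_0(n,m)$ beyond which the ${\rm PGL}(n+1)$-stabilizer of a generic submanifold jet becomes trivial. This is a Spencer-cohomological statement; for special cases such as lines in $\BP^n$ one has explicit computations giving the bound $\ell=3$ mentioned earlier in the paper, but the uniform bound requires the general machinery of \cite{Mor}. The passage from uniqueness of $g$ to its local constancy in the final step is technical but should follow from careful bookkeeping of Maurer-Cartan pullbacks once the rigidity of the first step is in place.
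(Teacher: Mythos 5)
The paper does not prove Theorem \ref{t.JeVe}; it is recalled from Jensen (Theorem 3, p.~32 of \cite{Jen}) and Verderesi (\cite{Ve}), and the paper's own machinery is only claimed (Remark \ref{r.JeVe}) to recover a \emph{weaker} statement in which $\ell$ depends on the submanifold $S$, not only on $n$. Your sketch therefore has to stand on its own, and it has a genuine gap precisely at its first step. You assert a uniform $\ell_0(n,m)$ beyond which the ${\rm PGL}(n+1)$-stabilizer of a \emph{generic} $\ell_0$-jet is trivial, justified by ``an effective Hilbert-basis / Spencer $\delta$-Poincar\'e argument in the vein of \cite{Mor}.'' But the theorem concerns arbitrary submanifolds $S,\widetilde S$, and you are not free to pass to a generic jet: the jets of the given $S$ may be special at every point. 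For homogeneous examples (linear subspaces, quadrics, more generally submanifolds with positive-dimensional projective symmetry) the stabilizer of the $k$-jet of $S$ at a point never becomes trivial, for any $k$. In those cases the map $g$ in Definition \ref{d.contact}(iii) is not unique and need not be locally constant (e.g. for $S=\widetilde S$ a line one can take a nonconstant family $g(s)$ of transformations whose elements fix the relevant jets), so your Maurer--Cartan argument that $g^*\omega_{{\rm PGL}(n+1)}\equiv 0$ simply fails; the conclusion of the theorem is still true there, but it must be obtained by the involutive/flat branch of Cartan's equivalence method, not by rigidity of $g$. A correct proof has to treat the whole stratification by stabilizer type, which is essentially the content of the Jensen--Verderesi moving-frame arguments.

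The second, related gap is the uniformity of the bound. Even on the stratum where prolongation does terminate with trivial stabilizer, the order at which it terminates a priori depends on the submanifold (its structure functions), not only on $(n,m)$. The paper is explicit that its Morimoto-based bound $k_o$ in Theorem \ref{t.G}, and hence $\ell(\sK)$ in Theorem \ref{t.1}, is not known to be bounded by the dimension alone; Example \ref{e.t.1} even shows $\ell(\sK)$ can be arbitrarily large for fixed ambient data in the family setting. So invoking the same Spencer-cohomological machinery does not by itself produce the dimension-only bound $\ell(n)$ that is the actual content of Theorem \ref{t.JeVe}; that uniformity is the hard part and is exactly what \cite{Jen} and \cite{Ve} supply by different, more explicit arguments. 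As written, your proposal proves at best an $S$-dependent version, i.e. the statement of Remark \ref{r.JeVe}, not the theorem itself.
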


We see below (Remark \ref{r.JeVe}) that (a slightly weaker version of) Theorem \ref{t.JeVe} can be interpreted as a special case of Theorem \ref{t.1}.  The  link between the two theorems is the following relation between
Definition \ref{d.contact} (ii) and  the equivalence of  infinitesimal neighborhoods of certain rational curves.

\begin{proposition}\label{p.example}
Let $S, \widetilde{S} $ be two submanifolds of the same dimension $m$ in $\BP^{n}$ which agree up to order $k$ at $x \in S \cap \widetilde{S}$.
Regarding $\BP^{n}$ as a hyperplane in $\BP^{n+1}$, let $X$ (resp. $\widetilde{X}$) be the blowup of $\BP^{n+1}$ along $S$ (resp. $\widetilde{S}$). Choose two lines $C, \widetilde{C} \subset \BP^{n+1}$ such that $x \in C\cap \widetilde{C}$ and $C, \widetilde{C} \not\subset \BP^{n}.$ Let $A \subset X$ (resp. $\widetilde{A} \subset \widetilde{X}$) be the proper transform of $C$ (resp. $\widetilde{C}$). Here when $S$ and $\widetilde{S}$ are not closed submanifolds of $\BP^{N}$, we consider the blowup $X$ (resp. $\widetilde{X}$) as defined only over  a neighborhood of $C$ (resp. $\widetilde{C}$)  in $\BP^{n+1}$.  Then $(A/X)_k $ is biholomorphic to $(\widetilde{A}/\widetilde{X})_k$. \end{proposition}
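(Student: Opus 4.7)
The plan is to construct an explicit biholomorphism $\Phi: (\widetilde{A}/\widetilde{X})_k \to (A/X)_k$ by covering $A$ with two coordinate patches, defining the sheaf map $\Phi^{\#}$ on each, and verifying that the discrepancy between the two local definitions lies in $\sI_A^{k+1}$.

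First I would reduce to the case $C = \widetilde{C}$: the subgroup of $\mathrm{PGL}(n+2)$ fixing $\BP^n$ pointwise acts transitively on the lines through $x$ not contained in $\BP^n$, so one can choose $\tau \in \mathrm{PGL}(n+2)$ with $\tau(C) = \widetilde{C}$ and $\tau|_{\BP^n} = \mathrm{id}$. Since $\tau$ fixes $S$ pointwise, it lifts to an automorphism of $X$ carrying $A$ onto the proper transform of $\widetilde{C}$ in $X$. I would then pick affine coordinates $(w_0, w_1, \ldots, w_n)$ on a neighborhood of $x$ so that $\BP^n = \{w_0 = 0\}$, $C$ is the $w_0$-axis, and $T_x S = T_x \widetilde{S} = \{w_{m+1} = \cdots = w_n = 0\}$; near $x$, $S$ and $\widetilde{S}$ become the graphs $w_i = F_i$ and $w_i = \widetilde{F}_i$ for $i \geq m+1$, with $F_i, \widetilde{F}_i \in (w_1, \ldots, w_m)^2$ and $F_i - \widetilde{F}_i \in (w_1, \ldots, w_m)^{k+1}$ by hypothesis. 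Near $A$, the blowup $X$ is covered by two open sets: a blowup chart around $x_E := A \cap E$ with coordinates $(w_0, w_1, \ldots, w_m, u_{m+1}, \ldots, u_n)$, in which $A$ is the $w_0$-axis and the blowup map sends $w_i \mapsto w_i$ for $i \leq m$ and $w_i \mapsto F_i + w_0 u_i$ for $i \geq m+1$; and the canonical identification of $X \setminus E$ with $\BP^{n+1} \setminus S$. Analogous charts exist for $\widetilde{X}$ using $\widetilde{F}_i$ and $\widetilde{u}_i$. I would define $\Phi^{\#}$ on the first chart by $\widetilde{u}_i \mapsto u_i$ (identity on the $w_j$), and on the second by the identity on $\BP^{n+1}$-coordinates.

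The key step---and where I expect the main obstacle---is to verify compatibility of these two definitions on the overlap modulo $\sI_A^{k+1}$. Pulling back a $\BP^{n+1}$-coordinate function $w_i$ (for $i \geq m+1$) through the $X$-chart gives $F_i + w_0 u_i$, while going through the $\widetilde{X}$-chart and then applying $\Phi^{\#}$ gives $\widetilde{F}_i + w_0 u_i$; the difference $F_i - \widetilde{F}_i$ lies in $(w_1, \ldots, w_m)^{k+1}$. Because $w_1, \ldots, w_m$ all vanish on $A$ in the blowup chart (a consequence of $C$ being transverse to $\BP^n$ with direction dual to $\partial/\partial w_0$, forcing $x_E$ to sit at $w_1 = \cdots = w_m = u_{m+1} = \cdots = u_n = 0$), one has $(w_1, \ldots, w_m)^{k+1} \subset \sI_A^{k+1}$, so this difference is annihilated in the $k$-th infinitesimal neighborhood. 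For a general section $g \in \sO_{\BP^{n+1}}/(w_1, \ldots, w_n)^{k+1}$, the same conclusion follows by Taylor-expanding $g$ in the $w_{m+1}, \ldots, w_n$ slots and observing that each resulting correction term carries a factor $F_i - \widetilde{F}_i$. The two local maps therefore glue to the required isomorphism $\Phi$, completing the proof.
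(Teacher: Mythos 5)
Your proof is correct and follows essentially the same route as the paper: reduce to $C=\widetilde{C}$ via the subgroup of ${\rm PGL}(n+2)$ fixing $\BP^n$ pointwise, cover $A$ by the blowup chart at $A\cap E$ together with the chart identified with an open part of $\BP^{n+1}$, and check that the two local identifications differ only by terms carrying a factor $F_i-\widetilde{F}_i\in(w_1,\dots,w_m)^{k+1}\subset \sI_A^{k+1}$. The only cosmetic difference is that the paper straightens $S$ and $\widetilde{S}$ in two separate adapted coordinate systems and compares transition functions of the blowups (citing Morrow--Rossi), whereas you keep both centers as graphs in a single coordinate system and verify the gluing of the two locally defined maps directly.
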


 \begin{proof}
   Note that the subgroup  of ${\rm PGL}(n+2)$ consisting of the projective transformations of $\BP^{n+1}$ fixing the hyperplane $\BP^{n}$ pointwise acts transitively on the affine cell $U:= \BP^{n+1} \setminus \BP^{n}$ and this action  can be lifted to an action on $X$ (resp. $\widetilde{X}$).  As some element of this group sends $\widetilde{C}$ to $C$, we may assume that $C = \widetilde{C}$.

By the assumptions on $S$ and $\widetilde{S}$, we can choose an open  neighborhood $O$ (resp. $\widetilde{O}$) of $x \in \BP^{n+1}$ with holomorphic coordinates $(t_1, \ldots, t_{n+1})$ (resp. $\widetilde{t}_1, \ldots, \widetilde{t}_{n+1}$) centered at $x$ such that \begin{itemize}
\item[(1)]  $t_{n+1} = \widetilde{t}_{n+1}$ on $O\cap \widetilde{O}$ and  the hyperplane $\BP^n \cap O \cap \widetilde{O}$ is defined by
$t_{n+1} =0 = \widetilde{t}_{n+1}$; \item[(2)]
$t_{1} = \widetilde{t}_{1}, t_{2} = \widetilde{t}_{2}, \ldots, t_m = \widetilde{t}_m$ on $O \cap \widetilde{O}$;
\item[(3)]
$S \cap O$ (resp. $\widetilde{S} \cap \widetilde{O}$) is defined by   $$ t_{m+1} = t_{m+2} = \cdots = t_n = t_{n+1} =0 $$ $$\mbox{ ( resp. } \widetilde{t}_{m+1} = \widetilde{t}_{m+2} = \cdots = \widetilde{t}_n = \widetilde{t}_{n+1} =0 \mbox{ )};$$
\item[(4)] $C \cap O$ (resp. $C \cap \widetilde{O}$) is defined by $$t_1= \cdots = t_n =0 \mbox{ ( resp. } \widetilde{t}_1 = \cdots = \widetilde{t}_n=0 \mbox{ )};$$
\item[(5)] $t_{m+1} = \widetilde{t}_{m+1}, t_{m+2}   = \widetilde{t}_{m+2}, \cdots, t_n = \widetilde{t}_n,$ when they are restricted to $(C\cap O \cap \widetilde{O}/ O \cap \widetilde{O})_k$.
   \end{itemize}
   Fix inhomogeneous coordinates $(u_1, \ldots, u_{n+1})$ on $U$ such that $C \cap U$ is defined by $u_1 = \cdots = u_{n} =0$. On $U \cap O$ (resp. $U \cap \widetilde{O}$), we have transition functions of coordinates
   $$u_i = G_i(t_1, \ldots, t_{n+1}) \mbox{ ( resp. }  u_i = \widetilde{G}_i(\widetilde{t}_1, \ldots, \widetilde{t}_{n+1}) \mbox{ )}$$ for $1 \leq i \leq n+1$.
   By our choices of coordinates, the functions $G_i$ and $\widetilde{G}_i$ coincide on
   $(C \cap U \cap O \cap \widetilde{O}/U\cap O \cap \widetilde{O})_k$.

Let $E \subset X$ (resp. $\widetilde{E} \subset \widetilde{X}$) be the exceptional divisor of the blowup $X \to \BP^{n+1}$ along $S$ (resp. $\widetilde{X} \to \BP^{n+1}$ along $\widetilde{S}$).
Identify $U$ with an open subset  ${\bf U} \subset X$ (resp. $\widetilde{\bf U} \subset \widetilde{X}$) in a natural way with coordinates ${\bf u}_1, \ldots, {\bf u}_{n+1}$ (resp. $\widetilde{\bf u}_1, \ldots, \widetilde{\bf u}_{n+1}$) induced by $u_1, \ldots, u_{n+1}$.
The coordinates $t_1, \ldots, t_{n+1}$ and $\widetilde{t}_1, \ldots, \widetilde{t}_{n+1}$ induce coordinates ${\bf t}_1, \ldots, {\bf t}_{n+1}$ in a neighborhood ${\bf O}$  of $A \cap E$ in $X$  and coordinates $\widetilde{\bf t}_1, \ldots, \widetilde{\bf t}_{n+1}$ in a neighborhood  $\widetilde{\bf O}$ of $\widetilde{A} \cap \widetilde{E}$ in $\widetilde{X}$ via quadratic transformations of the blowup:
$$ {\bf t}_1 = t_1, \ldots, {\bf t}_m = t_m, {\bf t}_{m+1} = \frac{t_{m+1}}{t_{n+1}}, \ldots, {\bf t}_n= \frac{t_n}{t_{n+1}}, {\bf t}_{n+1} = t_{n+1}, $$ $$ \widetilde{\bf t}_1 = \widetilde{t}_1, \ldots, \widetilde{\bf t}_m = \widetilde{t}_m, \widetilde{\bf t}_{m+1} = \frac{\widetilde{t}_{m+1}}{\widetilde{t}_{n+1}}, \ldots, \widetilde{\bf t}_n= \frac{\widetilde{t}_n}{\widetilde{t}_{n+1}}, \widetilde{\bf t}_{n+1} = \widetilde{t}_{n+1}.  $$
 We have $A \subset {\bf U} \cup {\bf O}$  and $\widetilde{A} \subset \widetilde{\bf U} \cup \widetilde{\bf O}$. The coordinate transition functions $${\bf u}_i = {\bf G}_i({\bf t}_1, \ldots, {\bf t}_{n+1}) \ \mbox{ and }
\widetilde{\bf u}_i = \widetilde{\bf G}_i(\widetilde{\bf t}_1, \ldots, \widetilde{\bf t}_{n+1}) $$
are obtained from  $G_i$ and $\widetilde{G}_i$ by the substitutions   $$t_1 = {\bf t}_1,  \ldots, t_m= {\bf t}_m, t_{m+1} = {\bf t}_{n+1} {\bf t}_{m+1}, \ldots, t_n = {\bf t}_{n+1} {\bf t}_n, t_{n+1} = {\bf t}_{n+1},$$ $$ \widetilde{t}_1 = \widetilde{\bf t}_1, \ldots, \widetilde{t}_m = \widetilde{\bf t}_m, \widetilde{t}_{m+1}=  \widetilde{\bf t}_{n+1} \widetilde{\bf t}_{m+1}, \ldots, \widetilde{t}_n = \widetilde{\bf t}_{n+1} \widetilde{\bf t}_n, \widetilde{t}_{n+1} = \widetilde{\bf t}_{n+1}.  $$
Since $G_i$ and $\widetilde{G}_i$ coincide on
   $(C \cap U \cap O \cap \widetilde{O}/U\cap O \cap \widetilde{O})_k,$ the functions ${\bf G}_i$ and $\widetilde{\bf G}_i$ coincide on $$(A \cap {\bf U} \cap {\bf O} / {\bf U} \cap {\bf O})_k  \cap
      (\widetilde{A} \cap \widetilde{\bf U} \cap \widetilde{\bf O}/\widetilde{\bf U} \cap \widetilde{\bf O})_k$$ after the natural identification $U = {\bf U} = \widetilde{\bf U}$.
Thus they define biholomorphic structures on $(A/X)_k$ and $(\widetilde{A}/\widetilde{X})_k$
(see Remark 1 in page 302 of \cite{MR}).
\end{proof}

Using Proposition \ref{p.example}, we can relate Definition \ref{d.contact} (iii) to Definition \ref{d.iso} (2) as follows.

 \begin{theorem}\label{t.example}
Let $S, \widetilde{S} $ be two submanifolds of $\BP^{n}$ which have contact up to order $k > 0$.
Regarding $\BP^{n}$ as a hyperplane in $\BP^{n+1}$, let $X$ (resp. $\widetilde{X}$) be the blowup of $\BP^{n+1}$ along $S$ (resp. $\widetilde{S}$).
Let $\sK$ (resp. $\widetilde{\sK}$) be the connected open subset of ${\rm Douady}(X)$ (resp.
${\rm Douady}(\widetilde{X})$) parametrizing proper transforms of lines in $\BP^{n+1}$ intersecting $S$ (resp. $\widetilde{S}$) which are not contained in $\BP^{n}$. Here, as in Theorem \ref{t.example}, if $S$ and $\widetilde{S}$ are not closed submanifolds of $\BP^{n}$, we consider the blowup $X$ (resp. $\widetilde{X}$) as defined only in  a neighborhood of some member of $\sK$ (resp. $\widetilde{\sK}$).  Then $\sK$ and $\widetilde{\sK}$ are iso-equivalent up to order $k$. \end{theorem}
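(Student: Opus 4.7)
The plan is to upgrade Proposition \ref{p.example} from a pointwise statement to one about families, with the family structure supplied by Definition \ref{d.contact}(iii). From the contact hypothesis, extract $h: S_o \to \widetilde{S}_o$ and $g: S_o \to {\rm PGL}(n+1)$ such that $g(s)\cdot h(s)=s$ and $S$, $g(s)\cdot\widetilde{S}$ agree up to order $k$ at $s$ for every $s\in S_o$. Fix homogeneous coordinates on $\BP^{n+1}$ with $\BP^n=\{Z_{n+1}=0\}$, and lift each $g(s)$ to a projective transformation $\bar{g}(s)\in{\rm PGL}(n+2)$ of $\BP^{n+1}$ by the block-diagonal extension acting as $g(s)$ on the first $n+1$ coordinates and as the identity on $Z_{n+1}$, so that $\bar{g}: S_o \to {\rm PGL}(n+2)$ is holomorphic and each $\bar{g}(s)$ preserves $\BP^n$ set-wise with $\bar{g}(s)|_{\BP^n}=g(s)$.

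Define $\sW\subset\sK$ (resp.\ $\widetilde{\sW}\subset\widetilde{\sK}$) as the open subset consisting of proper transforms of lines whose intersection with $\BP^n$ lies in $S_o$ (resp.\ $\widetilde{S}_o$). Define $f:\sW\to\widetilde{\sW}$ by sending the proper transform of a line $C$, with $s:=C\cap\BP^n\in S_o$, to the proper transform of $\bar{g}(s)^{-1}(C)$, which meets $\BP^n$ at $h(s)\in\widetilde{S}_o$; this is a biholomorphism whose inverse is built analogously from $\bar{g}(h^{-1}(\cdot))$. To construct $F_k:\rho_k^{-1}(\sW)\to\widetilde{\rho}_k^{-1}(\widetilde{\sW})$, note that for each $[A]\in\sW$ with associated point $s$, Proposition \ref{p.example} applied to $S$ and $S':=g(s)\cdot\widetilde{S}$ gives an isomorphism $(A/X)_k\cong(A/X'_s)_k$ for $X'_s:=\Bl_{S'}\BP^{n+1}$, and composing with the biholomorphism $X'_s\to\widetilde{X}$ induced by the lift of $\bar{g}(s)^{-1}$ (noting $\bar{g}(s)^{-1}\cdot S'=\widetilde{S}$) yields $(A/X)_k\cong(\widetilde{A}/\widetilde{X})_k$ for the paired lines. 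The maps $F_\ell$ for $\ell<k$ are then obtained by restricting $F_k$ via the inclusions $\widetilde{j}^\ell_k$.

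The main obstacle is verifying that these pointwise isomorphisms assemble into a single morphism of complex spaces that varies holomorphically with $[A]\in\sW$. To handle this, I would carry out the proof of Proposition \ref{p.example} in families: pick a holomorphic frame on $\BP^{n+1}$ varying with $s\in S_o$ and adapted to $S$ near each $s$, together with the corresponding family of quadratic transformation coordinates $(\mathbf{t}_1,\ldots,\mathbf{t}_{n+1})$ on the blowup over $\sW$. Since $\bar{g}$ is holomorphic in $s$ and $S$, $S'=g(s)\widetilde{S}$ agree to order $k$ at each $s$, the transition functions $\mathbf{G}_i(s,\cdot)$ and $\widetilde{\mathbf{G}}_i(s,\cdot)$ produced by the two blowups coincide modulo the $(k+1)$-th power of the ideal of the universal line family, yielding an isomorphism of structure sheaves and hence of complex spaces. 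The compatibilities $f\circ\rho_\ell=\widetilde{\rho}_\ell\circ F_\ell$ and $F_k\circ j^\ell_k=\widetilde{j}^\ell_k\circ F_\ell$ are then built into the construction by naturality.
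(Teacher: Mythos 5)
Your argument is essentially the paper's own proof: it extracts the contact data $(h,g)$ from Definition \ref{d.contact}(iii), lifts each $g(s)$ to a projective transformation of $\BP^{n+1}$ preserving $\BP^n$, produces the fiberwise identifications by combining Proposition \ref{p.example} (applied to $S$ and $S'=g(s)\cdot\widetilde{S}$, which agree to order $k$ at $s$) with the isomorphism induced by $g(s)^{-1}$, and assembles these holomorphically over an open subset of $\sK$ exactly as the paper does. The only cosmetic differences are that the paper parametrizes $\sW\cong U\times S$ via an auxiliary hyperplane and defines $f$ by keeping the auxiliary point fixed rather than transporting the whole line by $\bar{g}(s)^{-1}$, and that your block-diagonal lift of $g(s)$ (like the paper's ``inclusion ${\rm PGL}(n+1)\subset{\rm PGL}(n+2)$'') tacitly requires a local holomorphic choice of linear representative of $g$, which is harmless after shrinking $S_o$.
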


\begin{proof}
Fix a hyperplane ${\bf P}^{n} \subset \BP^{n+1}$ different from $\BP^n \subset \BP^{n+1}$.
To prove the theorem, we may replace $S$ and $\widetilde{S}$ by their open subsets such  that $$S_o= S, \ \widetilde{S}_o=\widetilde{S}, \ S \cap {\bf P}^n =  \emptyset = \widetilde{S} \cap {\bf P}^n$$ in the notation of Definition \ref{d.contact} (iii).   Thus we have a biholomorphic map $h: S \to \widetilde{S}$ and a holomorphic map $g: S \to {\rm PGL}(n+1)$ such that $S$ and $g(s) \cdot \widetilde{S}$ agree at $s = g(s)\cdot h(s)$ up to order $k$ for each $s \in S$.
Furthermore, we can fix an inclusion ${\rm PGL}(n+1) \subset {\rm PGL}(n+2)$ as a closed subgroup and regard each $g(s)$ as a projective transformation of $\BP^{n+1}$ preserving $\BP^n$.

Fix a connected open subset $U \subset {\bf P}^n \setminus \BP^n$. For each $s \in S$ (resp. $\widetilde{s} \in \widetilde{S}$) and $u \in U$, denote by $A(u,s) \subset X$ (resp. $\widetilde{A}(u, \widetilde{s}) \subset \widetilde{X}$) the proper transformation in $X$ (resp. $\widetilde{X}$) of the line joining $u$ and $s$ (resp. $\widetilde{s}$). Define
connected open subsets $W \subset \sK$ and $\widetilde{W} \subset \widetilde{\sK}$ by
$$ W := \{  [A(u,s)] \in \sK, \ u \in U, s \in S\} \cong U \times S,$$ $$ \widetilde{W}:= \{ [\widetilde{A}(u, \widetilde{s})] \in \widetilde{\sK}, \ u \in U, \widetilde{s} \in \widetilde{S}\} \cong U \times \widetilde{S}.$$ Define the biholomorphic map
$f: W \to \widetilde{W}$ by $f([A(u,s)]) = [\widetilde{A}(u, h(s))]$ for $[u,s] \in W$.

Let $X'$ be the blowup of $\BP^{n+1}$ along $S' := g(s) \cdot \widetilde{S}$ and let $A' \subset X'$ be the proper transformation of the line joining $g(s) \cdot u$ to $s$.
Then we have biholomorphic maps
$$ (A(u,s)/X)_k \cong (A'/X')_k \cong (\widetilde{A}( u, h(s))/\widetilde{X})_k $$ for each $[A(u,s)]  \in W$ where the first biholomorphic map is from  Proposition \ref{p.example} and the second biholomorphic map is induced by the projective transformation $g(s) \in {\rm PGL}(n+2)$.  This defines the biholomorphic map $F_k: \rho_k^{-1}(W) \cong \widetilde{\rho}_k^{-1}(\widetilde{W})$, proving that $\sK$ and $\widetilde{\sK}$ are iso-equivalent up to order $k$.
\end{proof}

Obviously, any two submanifolds $S, \widetilde{S} \subset \BP^n$ of equal dimension have contact up to order 1. Thus the families $\sK$ and $\widetilde{\sK}$ in Theorem \ref{t.example} are iso-equivalent up to order 1. This is a special case of Proposition \ref{p.mrc} because
 members of $\sK, \widetilde{\sK}$ in Theorem  \ref{t.example} are unbendable rational curves (see Lemma \ref{l.Hw10} below).

We introduce the following generalization of the notion of varieties of minimal rational tangents  in \cite{HM98}.

\begin{definition}\label{d.vmrt} Let $A \subset X$ be an unbendable nonsingular rational curve.
Let $[T_{A,y}] \in \BP T_{X,y}$ be the point corresponding to the tangent space of $A$ at $y \in A$.
For any point $y \in A$, the set of tangent spaces to deformations of $A$ fixing $y$ in $X$
form a germ of $m$-dimensional submanifold $\sC^A_y \subset \BP T_{X,y}$ containing the point $[T_{A,y}]$, called the {\em variety of minimal rational tangents} (abbr. VMRT) of $A$ at $y$.
It is clear that if $\Phi: (A/X)_{\sO} \to (\widetilde{A}/\widetilde{X})_{\sO}$ is a biholomorphic map,
then for each $y \in A$,  the germs of submanifolds $$[T_{A,y}] \in \sC_y^A \subset \BP T_{X,y} \mbox{ and }
 [T_{\widetilde{A}, \Phi(y)}] \in \sC_{\Phi(y)}^{\widetilde{A}} \subset \BP T_{\widetilde{X}, \Phi(y)}$$ are projectively isomorphic via  ${\rm d}_y \Phi: \BP T_{X,y} \cong \BP T_{\widetilde{X}, \Phi(y)}.$
\end{definition}

A version of the following lemma has appeared in Example 1.7 of \cite{Hw10}.

\begin{lemma}\label{l.Hw10}
Let $S, X, A$ be as in Proposition \ref{p.example}. Then $A$ is unbendable and  for any point $y \in A$ not on the exceptional divisor of the blowup $X \to \BP^{n+1}$, the VMRT $[T_{A,y}] \in \sC^A_y \subset \BP T_{X,y} \cong \BP^{n}$    is projectively isomorphic to $x \in S \subset  \BP^{n}$ as germs of submanifolds in projective space. In particular, if the germs $x \in S\subset \BP^{n}$ and $x \in \widetilde{S} \subset \BP^{n}$ are not projectively isomorphic, then $(A/X)_{\sO}$ is not biholomorphic to $(\widetilde{A}/\widetilde{X})_{\sO}$. \end{lemma}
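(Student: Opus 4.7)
The plan is to reduce everything to a local computation on one chart of the blowup, and then to identify the VMRT with $S$ via linear projection from a point of $A$ onto the hyperplane $\BP^{n}$. First, I would fix affine coordinates $(t_1,\dots,t_{n+1})$ on $\BP^{n+1}$ centered at $x$ so that $\BP^{n}=\{t_{n+1}=0\}$, $S=\{t_{m+1}=\dots=t_{n+1}=0\}$ near $x$, and $C$ is the $t_{n+1}$-axis. On the chart of $X$ where $t_{n+1}$ dominates on the exceptional divisor $E$, we have coordinates $(t_1,\dots,t_m,s_{m+1},\dots,s_n,t_{n+1})$ with $s_i=t_i/t_{n+1}$, and the proper transform $A$ becomes the locus $t_1=\dots=t_m=s_{m+1}=\dots=s_n=0$, meeting $E$ transversely at the origin.

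Next I would compute the normal bundle and show $A$ is unbendable. Deformations of $A$ in $X$ correspond bijectively to lines in $\BP^{n+1}$ meeting $S$ and not contained in $\BP^{n}$: such a line is parametrized by a point $x'\in S$ (an $m$-parameter choice) together with a direction in $\BP^{n}$-complement (an $n$-parameter choice not lying in $\BP^{n}$ through $x'$), giving $h^0(A,N_{A/X})=m+n$. Since $A\cong\BP^{1}$ is free, $N_{A/X}\cong \sO(a_1)\oplus\cdots\oplus\sO(a_n)$ with all $a_i\geq 0$; and since $A\cdot K_X^{-1}=A\cdot\pi^*\sO(n+2)-(n-m)\,A\cdot E=m+2$, the total degree is $m$. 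The dimension count $m+n$ then forces $N_{A/X}\cong\sO(1)^{\oplus m}\oplus\sO^{\oplus(n-m)}$, so $A$ is unbendable with $p=m$.

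To identify the VMRT at $y\in A\setminus E$ I would use that $\mu\colon X\to\BP^{n+1}$ is a local isomorphism at $y$, so $\BP T_{X,y}\cong \BP T_{\BP^{n+1},\mu(y)}\cong\BP^{n}$. Deformations of $A$ fixing $y$ are exactly lines through $\mu(y)$ that meet $S$ near $x$ and are not contained in $\BP^{n}$; the map sending such a line to its (unique) intersection point with the hyperplane $\BP^{n}$ is the linear projection $\pi_y$ from $\mu(y)$ onto $\BP^{n}$, which is a projective isomorphism $\BP T_{\BP^{n+1},\mu(y)}\to\BP^{n}$. Under this identification, the tangent direction of the deformed line at $y$ is sent to the intersection point $x'\in S$, and in particular the tangent direction $[T_{A,y}]$ of $A$ itself is sent to $x$. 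Hence $\pi_y$ restricts to a projective isomorphism of germs $(\sC^A_y,[T_{A,y}])\subset\BP T_{X,y}$ onto $(S,x)\subset\BP^{n}$.

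For the last sentence, I would invoke Definition \ref{d.vmrt}: any biholomorphism $\Phi\colon(A/X)_{\sO}\to(\widetilde{A}/\widetilde{X})_{\sO}$ induces, via $\mathrm{d}_y\Phi$, a projective isomorphism between the VMRT germs $(\sC^A_y,[T_{A,y}])$ and $(\sC^{\widetilde{A}}_{\Phi(y)},[T_{\widetilde{A},\Phi(y)}])$; combined with the identifications just established for both sides, this would produce a projective isomorphism of germs $(S,x)\cong(\widetilde{S},x)$ in $\BP^{n}$, contradicting the hypothesis. The only mildly delicate step is checking that the map from deformations fixing $y$ to tangent directions is indeed an immersion whose image is a germ of submanifold projectively equivalent to $S$; this is immediate once the local coordinate picture and the identification via $\pi_y$ are set up, so I do not expect any real obstacle.
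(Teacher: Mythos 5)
Your identification of the VMRT is fine and is essentially the paper's own argument (the paper likewise identifies the tangent directions at $z=\beta(y)$ of lines joining $z$ to $S$ with $S\subset\BP^n$, which is exactly your projection $\pi_y$), and the final step via Definition \ref{d.vmrt} is as intended. The genuine gap is in your proof of unbendability. You argue: $N_{A/X}=\bigoplus_{i=1}^n\sO(a_i)$ with $a_i\geq 0$, total degree $\sum a_i=m$, and $h^0(A,N_{A/X})=m+n$, and then claim this "forces" $N_{A/X}\cong\sO(1)^{\oplus m}\oplus\sO^{\oplus(n-m)}$. It does not: for any nonnegative integers with $\sum a_i=m$ one has $h^0=\sum(a_i+1)=m+n$, so the splitting types $\sO(2)\oplus\sO(1)^{\oplus(m-2)}\oplus\sO^{\oplus(n-m+1)}$, $\sO(m)\oplus\sO^{\oplus(n-1)}$, etc., all satisfy your numerical constraints. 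Since unbendability is part of the statement and is what makes $\sC^A_y$ a well-defined $m$-dimensional germ (Definition \ref{d.vmrt}), this step must be repaired, not just asserted. (Your appeal to freeness of $A$ is also left unjustified, though it is the lesser issue.)

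The fix is easy and available to you in several ways. The paper simply invokes the standard computation: since $C$ meets the blowup center $S$ transversally at one point and $N_{C/\BP^{n+1}}\cong\sO(1)^{\oplus n}$, blowing up along the $m$-dimensional $S$ performs an elementary modification lowering the degree by one exactly in the $n-m$ directions normal to $S$ along $C$, giving $N_{A/X}\cong\sO(1)^{\oplus m}\oplus\sO^{\oplus(n-m)}$ directly; you could verify this in the blowup chart $(t_1,\dots,t_m,s_{m+1},\dots,s_n,t_{n+1})$ you already set up. Alternatively, you can rescue the numerical approach by adding a two-point count: deformations of $A$ fixing two general points $y,y'$ correspond to lines through two fixed points of $\BP^{n+1}$, hence are trivial, so $h^0(N_{A/X}\otimes{\bf m}_y\otimes{\bf m}_{y'})=\sum_i\max(a_i-1,0)=0$, which together with $a_i\geq0$ and $\sum a_i=m$ does force $a_i\in\{0,1\}$ with exactly $m$ ones.
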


\begin{proof}
As $C$ intersects $S$ transversally and $N_{C/\BP^{n+1}} \cong \sO(1)^{\oplus n}$, the normal bundle of the proper transform $A$ of $C$ under the blowup $\beta: X \to \BP^{n+1}$ along the $m$-dimensional submanifold $S \subset \BP^{n+1}$ has the isomorphism type
$$N_{A/X} \cong \sO(1)^{\oplus m} \oplus \sO^{\oplus (n-m)}.$$
Thus $A$ is unbendable and $\dim \sC^A_y = m$. Consider  the collection $\sL$ of all lines in $\BP^{n+1}$ connecting the point $z=\beta(y)$ to $S$. The set $$\sC^{\sL} \subset \BP T_{\BP^{n+1},z}$$ of tangent spaces at $z$ of members of $\sL$ is isomorphic to $S \subset \BP^n$ as projective submanifolds
such that the germ of $[T_{C,z}] \in \sC^{\sL} \subset \BP T_{ \BP^{n+1},z}$ is isomorphic to that of $x \in S \subset \BP^n$. The proper transforms of members of $\sL$ are deformations of $A$ fixing $y$ and they give all the local deformations of $A$ fixing $y$ because  $\dim \sL = m$.  It follows that $[T_{A,y}] \in \sC^A_y \subset \BP T_{X,y} \cong \BP^{n}$    is projectively isomorphic to the germ
$[T_{C,z}] \in \sC^{\sL} \subset \BP T_{ \BP^{n+1},z}$, hence to the germ
$x \in S \subset  \BP^{n}$. \end{proof}

Unbendable rational curves are free and their normal bundles are separating in the sense of Theorem \ref{t.1}, if the VMRT has positive dimension.
Thus  Lemma \ref{l.Hw10} enables us to  apply Theorem \ref{t.1} in the setting of Theorem \ref{t.example}
(see Remark \ref{r.JeVe}). It is also useful in the following examples.

\begin{example}\label{e.blowup}
Let $k \geq 2$ be a positive integer.
Let $S \subset \BP^2$ be a line and let $\widetilde{S} \subset \BP^{2}$ be an irreducible algebraic curve of degree $>1$ such that $S$ and $\widetilde{S}$ agree up to order $k$ at a point $x \in S \cap \widetilde{S}.$ As in Proposition \ref{p.example},
view $\BP^2$ as a hyperplane in $\BP^3$ and fix a line $C \subset \BP^3$ such that $x \in C$ and $ C \not\subset \BP^2$. Let $X$ (resp. $\widetilde{X}$) be the blowup of $\BP^3$ along $S$ (resp. $\widetilde{S}$) and let $A \subset X$ (resp. $\widetilde{A} \subset \widetilde{X}$) be the proper image of $C$ under the blowup. Then $(A/X)_k \cong (\widetilde{A}/\widetilde{X})_k$ by Proposition \ref{p.example}, but $(A/X)_{\sO} \not\cong (\widetilde{A}/\widetilde{X})_{\sO}$ by Lemma \ref{l.Hw10}. This shows that  Question \ref{q.free} has negative answer for the  free rational curve $A \subset X$.  \end{example}

\begin{example}\label{e.t.1}
Let $C_k \subset \BP^k$ be the rational normal curve of degree $k$, i.e., the closure of the image of the holomorphic map in affine coordinates $$ \C \ni t  \mapsto (x_1= t, x_2= t^2, \ldots, x_k = t^k).$$ Regarding $\BP^k $ as a hyperplane in $\BP^{k+1}$, we see that $C_k$ and $C_{k+1} \subset \BP^{k+1}$ have contact up to order $k$ as submanifolds of $\BP^{k+1}$. Then by Theorem \ref{t.example}, we obtain a free family $\sK$ on the blowup of $\BP^{k+2}$ along $C_k$ and another free family $\widetilde{\sK}$ on the blowup of $\BP^{k+2}$ along $C_{k+1}$, which are  iso-equivalent up to order $k$. As $\sK$ and $\widetilde{\sK}$ are not germ-equivalent by Lemma \ref{l.Hw10}, we see that the integer $\ell(\sK)$ in Theorem \ref{t.1} for this $\sK$  is at least $k+1$. \end{example}

\section{Equivalence problem for holomorphic G-structures}\label{s.Gstr}

In this section, we recall basic notions in the theory of holomorphic G-structures and present a key result, Theorem \ref{t.G}, which is a consequence of Morimoto's work \cite{Mor} on the equivalence problem of geometric structures.

\begin{definition}\label{d.Frame}
Let $V$ be a fixed vector space of dimension $n$.
Let $M$ be an $n$-dimensional complex manifold and let $\sR(M)$ be its frame bundle, i.e., a principal ${\rm GL}(V)$-bundle with the fiber at $x \in M$ is defined by
$$\sR_x(M) = {\rm Isom}(V, T_{M,x}).$$
A biholomorphic map $\Phi: M \to \widetilde{M}$ between complex manifolds induces a biholomorphic map
$\Phi_*: \sR(M) \to \sR(\widetilde{M})$
which sends an element $f \in {\rm Isom}(V, T_x(M))$ to the composition $$[ \Phi_*(f) : V \stackrel{f}{\to} T_{M,x} \stackrel{{\rm d}_x \Phi}{\to} T_{\widetilde{M}, \Phi(x)}] \ \in \sR_{\Phi(x)}(\widetilde{M}).$$
\end{definition}

\begin{lemma}\label{l.coordi}
Let $x \in M$ and $\widetilde{x} \in \widetilde{M}$ be points on complex manifolds. Fix a positive integer $\ell$. Then an isomorphism of complex spaces $\varphi:(x/M)_{\ell} \to (\widetilde{x}/\widetilde{M})_{\ell}$ induces a natural isomorphism of the infinitesimal neighborhoods of the fibers $$ \varphi_*: (\sR_x(M)/\sR(M))_{\ell-1} \to (\sR_{\widetilde{x}}(\widetilde{M})/\sR(\widetilde{M}))_{\ell-1}$$ such that
if $\varphi$ comes from a biholomorphic map $\Phi: (x/M)_{\sO} \cong (\widetilde{x}/\widetilde{M})_{\sO}$, then
$\varphi_*$ is just the restriction of $\Phi_*$  in Definition \ref{d.Frame}. \end{lemma}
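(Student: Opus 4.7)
The plan is to identify the target $(\sR_x(M)/\sR(M))_{\ell-1}$ with the fiber product $\sR(M)\times_M(x/M)_{\ell-1}$ (since the ideal of $\sR_x(M)$ in $\sR(M)$ is the pullback of $\fm_x$) and to produce an isomorphism between it and the corresponding object on the $\widetilde{M}$-side. This isomorphism must cover the restriction $\varphi|_{(x/M)_{\ell-1}}$; the additional data is essentially a ``differential'' of $\varphi$, and the crux of the lemma is that differentiating a quantity defined modulo $\fm_x^{\ell+1}$ produces one defined only modulo $\fm_x^{\ell}$. This is exactly why the order drops from $\ell$ to $\ell-1$ in passing from the base spaces to the frame bundles.

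Concretely, I would pick coordinates $(x_1,\ldots,x_n)$ on $M$ near $x$ and $(\widetilde{x}_1,\ldots,\widetilde{x}_n)$ on $\widetilde{M}$ near $\widetilde{x}$ and write $\varphi^{\#}(\widetilde{x}_i)=f_i(x)\in\fm_x/\fm_x^{\ell+1}$, with invertible linear part. The Jacobian $J=(\partial f_i/\partial x_j)$ then has entries in $\sO_M/\fm_x^{\ell}$ and is invertible over that Artinian ring. Using the coordinate frames $\{\partial/\partial x_j\}$ and $\{\partial/\partial\widetilde{x}_i\}$ to trivialize $\sR(M)\times_M(x/M)_{\ell-1}\cong(x/M)_{\ell-1}\times\mathrm{GL}(V)$ and similarly for $\widetilde{M}$, I would define
\[
  \varphi_*(p,g):=\bigl(\varphi|_{(x/M)_{\ell-1}}(p),\,J(p)\cdot g\bigr).
\]
This is a morphism of complex spaces because $\varphi|_{(x/M)_{\ell-1}}$ and the entries of $J$ are well-defined modulo $\fm_x^{\ell}$, and it is an isomorphism because $\varphi$ and $J$ are.

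The main technical step is coordinate-independence, so that the local formula glues to a global map. This reduces to the chain rule: under a change of coordinates on $M$ or $\widetilde{M}$, the Jacobian $J$ changes by pre- and post-composition with the Jacobians of the coordinate changes, while the trivialization of the frame bundle changes by the matching right-translation in $\mathrm{GL}(V)$, and the two changes cancel. Compatibility with Definition \ref{d.Frame} in the biholomorphism case is immediate, since $J(p)$ is literally the matrix of $\mathrm{d}_p\Phi$ in the chosen frames when $\varphi=\Phi|_{(x/M)_\ell}$. The only real obstacle is the careful bookkeeping of ideal powers—verifying that $J$ is well-defined at exactly order $\ell-1$ and no lower—which is a short calculation using the fact that the exterior derivative sends $\fm_x^k$ into $\fm_x^{k-1}\cdot\Omega^1_M$.
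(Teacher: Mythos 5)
Your proposal is correct and rests on the same computation as the paper's proof: the frame-bundle map is given in coordinates by $\widetilde{p}^i_j=\sum_k p^k_j\,\partial\Phi^i/\partial x^k$, so only the first derivatives of $\varphi$ enter and the order drops from $\ell$ to $\ell-1$. The only difference is packaging—the paper defines $\varphi_*$ by restricting $\Phi_*$ for an arbitrary germ extension $\Phi$ of $\varphi$ and then uses this coordinate formula to show independence of the extension, whereas you define $\varphi_*$ directly via the truncated Jacobian and check coordinate-independence by the chain rule—which amounts to the same verification.
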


\begin{proof}
We can extend $\varphi$ to a biholomorphic map $\Phi: (x/M)_{\sO} \to (\widetilde{x}/\widetilde{M})_{\sO}$ which induces $$\Phi_*: (\sR_x(M)/\sR(M))_{\sO} \to (\sR_{\widetilde{x}}(\widetilde{M})/\sR(\widetilde{M}))_{\sO}.$$ Define
$\varphi_*$ as the restriction of $\Phi_*$ to $(\sR_x(M)/\sR(M))_{\ell -1}$. It remains to show that this definition of $\varphi_*$ does not depend on the choice of the extension $\Phi$.
Let us check it by local coordinates.

Let $n$ be $\dim V= \dim M= \dim \widetilde{M}.$  Fix a basis of $V$ once and for all to identify a point of $\sR_y(M), y \in M$ (resp. $\sR_{\widetilde{y}}(\widetilde{M}), \widetilde{y} \in \widetilde{M}$) with a basis of $T_{M,y}$ (resp. $T_{\widetilde{M}, \widetilde{y}}$). Choose coordinates $(x^1, \ldots, x^n)$ in a neighborhood of  $x \in M$ and  coordinates  $(\widetilde{x}^1, \ldots, \widetilde{x}^n)$ in a neighborhood of $\widetilde{x} \in \widetilde{M}$. They give matrices of local holomorphic functions $$[p^i_j, 1 \leq i, j \leq n] \mbox{ on }(\sR_x(M)/\sR(M))_{\sO}$$  $$[\widetilde{p}^i_j, 1 \leq i, j \leq n]\mbox{ on }(\sR_{\widetilde{x}}(\widetilde{M})/ \sR(\widetilde{M}))_{\sO}$$ such that
  $$\{\sum_{i=1}^n p^i_j \frac{\partial}{\partial x^i}, \ 1 \leq j \leq n\} \mbox{ and } \{\sum_{i=1}^n \widetilde{p}^i_j \frac{\partial}{\partial \widetilde{x}^i}, \ 1 \leq j \leq n\}$$ describe bases of $T_{M,y}$ and  $T_{\widetilde{M}, \widetilde{y}}$ for $y$ in a neighborhood of $x\in M$ and $\widetilde{y} = \Phi(y)$.  Thus $( p^i_j, x^{k}, \ 1 \leq i,j,k \leq n)$ can be viewed as local coordinates on $(\sR_x(M)/\sR(M))_{\sO}$  and
$(\widetilde{p}^i_j, \widetilde{x}^{k}, \ 1 \leq i,j,k \leq n)$ can be viewed as local coordinates on  $(\sR_{\widetilde{x}}(\widetilde{M})/\sR(\widetilde{M}))_{\sO}$.  If the map $\Phi$ is given by
$\widetilde{x}^{i} = \Phi^{i}(x^1, \ldots, x^n)$ for some holomorphic functions $\Phi^1, \ldots, \Phi^n$, then the map $\Phi_*: \sR(M) \to \sR(\widetilde{M})$ is  given by $$ \widetilde{p}^i_j = \sum_{k =1}^n p_j^{k} \frac{\partial \Phi^i}{\partial x^{k}}, \ \widetilde{x}^{i} = \Phi^{i} (x^1, \ldots, x^n), $$ in these local coordinates. Thus $\Phi_*|_{(\sR_x(M)/\sR(M))_{\ell -1}}$ is determined by $\Phi|_{(x/M)_{\ell}}$.
\end{proof}

\begin{definition}\label{d.G}
Let $G \subset {\rm GL}(V)$ be a closed  subgroup. A $G$-structure on a complex manifold $M$ with $\dim M = \dim V$ means a principal $G$-subbundle $\sP \subset \sR(M).$
Let $\sP \subset \sR(M)$ and $\widetilde{\sP} \subset \sR(\widetilde{M})$ be $G$-structures on two complex manifolds $M$ and $\widetilde{M}$. \begin{itemize} \item[(1)] We say that  $\sP$ and $\widetilde{\sP}$ are {\em locally equivalent} (or $\sP$ is {\em locally equivalent} to $\widetilde{\sP}$), if there exist \begin{itemize} \item[(1a)] nonempty connected open subsets $O \subset M$ and $\widetilde{O} \subset \widetilde{M}$; and \item[(1b)] a biholomorphic map $\Phi: O\to \widetilde{O}$ such that the induced biholomorphic map $\Phi_*: \sR(O) \to \sR(\widetilde{O})$ sends $\sP|_O$ to $\widetilde{\sP}|_{\widetilde{O}}.$ \end{itemize} \item[(2)] We say that $\sP$ and $\widetilde{\sP}$ are {\em locally equivalent up to order} $k $ (or $\sP$ is {\em locally equivalent  to $\widetilde{\sP}$ up to order} $k$), if there exist \begin{itemize} \item[(2a)] nonempty connected open subsets $U \subset M$ and $\widetilde{U} \subset \widetilde{M}$; \item[(2b)] a biholomorphic map $\Psi: U \to \widetilde{U}$; and \item[(2c)]  a holomorphic family of biholomorphic maps $$ \{ \psi^x: (x/M)_k \to (\widetilde{x}/ \widetilde{M})_k, \ x \in U, \widetilde{x} = \Psi(x)\}$$  such that $\psi^x_*$ in the notation of Lemma \ref{l.coordi} sends
 $$\sP \cap (\sR_x(M)/\sR(M))_{k-1} \mbox{ to } \widetilde{\sP} \cap (\sR_{\widetilde{x}}(\widetilde{M})/ \sR(\widetilde{M}))_{k-1},$$ or equivalently, $ \psi^x_{*} ( \sP_x/\sP)_{k-1}  = (\widetilde{\sP}_{\widetilde{x}}/\widetilde{\sP})_{k-1}.$ \end{itemize} \end{itemize}\end{definition}

It is easy to see that any two $G$-structures $\sP$ and $\widetilde{\sP}$ are locally equivalent up to order 1.
It is clear that if $\sP$ and $\widetilde{\sP}$ are locally equivalent, then they are locally equivalent up to order $k$ for any positive integer $k$. In fact, given $\Phi$ as in Definition \ref{d.G} (1), we can just choose $$\Psi = \Phi \mbox{ on } U=O, \ \psi^x = \Phi_*|_{(\sP_x/\sP)_{k-1}}.$$ The following theorem says the converse holds if $k$ is sufficiently large.

\begin{theorem}\label{t.G} For a vector space $V$ of dimension $n$, let  $G \subset {\rm GL}(V)$ be a closed subgroup.
Let $M$ be an $n$-dimensional complex manifold with a holomorphic $G$-structure $\sP$.
Then there exist  a positive integer $k_o$  such that
if $\sP$ and  a $G$-structure $\widetilde{\sP}$  on a complex manifold $\widetilde{M}$ are locally equivalent up to order $k_o$, then they are locally equivalent. \end{theorem}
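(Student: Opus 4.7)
The plan is to deduce the theorem from Morimoto's equivalence result in \cite{Mor} for geometric structures obtained by iterated prolongation. First, to $\sP$ one associates its tower of prolongations $\sP = \sP^{(0)} \supset \sP^{(1)} \supset \cdots$, where $\sP^{(i)}$ is a $G^{(i)}$-subbundle of the frame bundle of $\sP^{(i-1)}$ carved out by the tautological $1$-form, and $G^{(i)} \subset {\rm GL}(\fg^{(i-1)})$ is the $i$-th algebraic prolongation of the Lie algebra $\fg$ of $G$. An analogous tower is formed for $\widetilde{\sP}$. The Hilbert basis theorem applied to the graded symbol algebra attached to $\fg$ guarantees that the sequence of prolongations stabilizes at some finite step $r = r(G)$, so that beyond level $r$ one is dealing either with an $\{e\}$-structure or with a fixed finite-dimensional symbol whose Spencer cohomology is controlled by a universal constant.

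The next step is to translate the hypothesis of Definition \ref{d.G}(2) into jet data on the prolongations. A family of biholomorphisms $\psi^x : (x/M)_k \to (\widetilde{x}/\widetilde{M})_k$ compatible with the $G$-structure at the level of $(k-1)$-jets of frames induces, by iterating Lemma \ref{l.coordi} at each level of the tower, a corresponding family of jet isomorphisms between the infinitesimal neighborhoods of the fibers of $\sP^{(i)}$ and $\widetilde{\sP}^{(i)}$ of order $k - i - 1$, each preserving the tautological $1$-form on that prolongation. Consequently, for $k$ exceeding $r$ by a sufficient margin, the torsion/structure functions on $\sP^{(r)}$ and $\widetilde{\sP}^{(r)}$ have identical Taylor coefficients along the fibers up to a prescribed order.

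Finally, I invoke Morimoto's theorem in \cite{Mor}: pointwise agreement of the structure functions up to an effective order $s = s(G, n)$, obtained via the Spencer $\delta$-Poincaré estimate, produces a local biholomorphism $\Phi : O \to \widetilde{O}$ such that $\Phi_* \sP|_O = \widetilde{\sP}|_{\widetilde{O}}$. Setting $k_o := r(G) + s(G, n) + 1$ yields a bound depending only on $G$ and $n = \dim V$. The hardest step is the middle paragraph: one must verify that the pointwise $k$-jet data of Definition \ref{d.G}(2) genuinely lifts through all $r$ prolongation levels to produce exactly the input format required by \cite{Mor}, which is phrased in terms of coframings on the top prolongation rather than principal-bundle reductions on $M$, and that the naturality provided by Lemma \ref{l.coordi} is preserved at every level of the tower.
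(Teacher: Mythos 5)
Your overall idea---reduce to Morimoto \cite{Mor}---is the right one, but the specific route has genuine gaps. First, the classical prolongation tower alone does not suffice: your claim that the tower $\sP^{(0)}\supset\sP^{(1)}\supset\cdots$ stabilizes at a finite step $r(G)$ ``by the Hilbert basis theorem'' fails for groups of infinite type, and the $G$-structures relevant to this paper (Definition \ref{d.sP}: the stabilizer of the pair $V^{\rho},V^{\mu}$, which contains the stabilizer of a proper subspace) are of infinite type, so no finite prolongation reduces them to an $\{e\}$-structure or to a rigid finite-dimensional symbol. What the finiteness results (Hilbert basis, Spencer $\delta$-Poincar\'e) actually give is that the symbol becomes \emph{involutive} after finitely many steps; this is exactly why Morimoto's scheme terminates at an involutive Cartan bundle rather than at an $\{e\}$-structure, and why the paper's proof splits into the involutive case (handled by Theorem 8.2 of \cite{Mor}, Singer--Sternberg and Tanaka--Ueno, with $k_o=3$) and the general case. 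Second, you omit the \emph{reduction} steps entirely: Theorem 9.1 of \cite{Mor} is an algorithm of prolongations \emph{and} reductions, the reductions being dictated by the actual, generally non-constant, structure function of $\sP$, and it only yields an involutive Cartan bundle off a nowhere-dense subset $S\subset M$. This genericity is not a technicality---it is the reason the open sets $O,\widetilde O$ of Definition \ref{d.G}(1) must be chosen strictly inside $U,\widetilde U$ of (2). ``Pointwise agreement of structure functions up to an effective order $s(G,n)$ implies local equivalence'' is not a theorem of \cite{Mor}; matching structure functions forces equivalence only for \emph{involutive} structures, and reaching involutivity requires the structure-dependent algorithm.

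As a consequence, your asserted bound $k_o=r(G)+s(G,n)+1$ depending only on $G$ and $n$ is both unjustified and stronger than what is currently known: in the paper $k_o$ depends on $\sP$ itself, because the number of steps and the orders involved in Morimoto's algorithm are governed by the structure function of $\sP$, and Remark \ref{r.JeVe} explicitly records that a bound depending only on the dimension is not known (compare Example \ref{e.t.1}). Finally, the step you yourself flag as hardest---propagating the order-$k$ fiberwise jet data of Definition \ref{d.G}(2) through the tower, compatibly with the reductions as well as the prolongations---is precisely where the paper's argument works: it uses Lemma \ref{l.coordi} to show each step of Morimoto's algorithm is determined by finite-order neighborhoods of points, with explicitly decidable orders; without carrying this out (and without the reductions) your argument does not connect the hypothesis to the input of \cite{Mor}.
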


This theorem is essentially contained in Morimoto's paper \cite{Mor}. As Morimoto has not stated it explicitly, we give a sketch of  the proof, with precise references  to \cite{Mor}. The reader may find it useful to look into Section 2 of \cite{Hw19} for a streamlined review of the relevant part of Morimoto's paper.

\begin{proof}[Proof of Theorem \ref{t.G}]
Before starting the proof, let us mention that we are given open subsets $U, \widetilde{U}$ of Definition \ref{d.G} (2) and need to find  open subsets $O, \widetilde{O}$ of Definition \ref{d.G} (1). In the proof below, the open subsets $O, \widetilde{O}$ are chosen to be strictly smaller than $U, \widetilde{U}$ to avoid certain nowhere-dense subsets in $U$ where the geometric structure cannot be lifted to an involutive structure.

 Suppose the $G$-structure $\sP$ is involutive in the sense that it satisfies the conditions in Theorem 8.2 of \cite{Mor}. We claim that $k_o =3$ works:
  any $G$-structure $\widetilde{\sP}$ on a manifold $\widetilde{M}$ which is locally equivalent to $\sP$ up to order 3 must be locally equivalent to $\sP$.
   To prove the claim, we may shrink $M$ and $\widetilde{M}$ to assume that we have biholomorphic maps $\Psi: M \cong \widetilde{M}$ and $\{\psi^x, x \in M\}$ satisfying Definition \ref{d.G} (2) with $k=3$.  Recall that the involutiveness of a $G$-structure (Theorem 8.2 of \cite{Mor}) is determined by the group $G \subset {\rm GL}(V)$ and the structure function of the $G$-structure. The values of the structure function  of $\sP \subset \sR(M)$ on the fiber $\sP_x, x \in M,$ is determined by $\sP \cap (\sR_x(M)/\sR(M))_2$, because it is calculated from the derivative of a natural 1-form (the fundamental form, i.e., the soldering form) on $\sR(M).$   Thus we see that  $\widetilde{\sP}$ is also involutive and have the same values of the structure function as those of $\sP$.   Thus $\sP$ and $\widetilde{\sP}$ are locally equivalent by Theorem 8.2 of \cite{Mor} (attributed to Singer-Sternberg and Tanaka-Ueno).

In the general case when $\sP$ is not  involutive, Theorem 9.1 of \cite{Mor} gives an algorithm consisting of a finite number of operations involving prolongations and reductions, through which we reach an involutive Cartan bundle of higher-order (in the sense of Definition 8.1 of \cite{Mor}) outside a nowhere-dense subset $S \subset M$. Each step of this algorithm is determined by a finite-order neighborhood of each point of $M$ coming from Lemma \ref{l.coordi}, the order of which can be explicitly decided. Thus there exists a positive integer $k_o$ determined by $\sP$ such that the structure of the resulting involutive Cartan bundle at each point $x \in M \setminus S$ is determined by the $k_o$-th order neighborhood  $(x/\sM)_{k_o}$. Now let $\widetilde{\sP}$ be a $G$-structure on $\widetilde{M}$ which is locally equivalent to $\sP$ up to  order $k_o$ via $\Psi: U \to \widetilde{U}$ in the sense of Definition \ref{d.G} (2). If we apply the algorithm of Theorem 9.1 of \cite{Mor} to $\widetilde{\sP}$ in the same manner as $\sP$,  we reach an involutive Cartan bundle at exactly the same number of steps as $\sP$ on $\Psi(U \setminus S)$ and the structures of the involutive Cartan bundles   arising from $\sP$ and $\widetilde{\sP}$ are isomorphic on $U \setminus S$ and $\Psi(U \setminus S)$ from Theorem 8.1 of
\cite{Mor}.  By the uniqueness part of Theorem 9.1 of \cite{Mor}, we can  find nonempty  open subsets $O \subset (U \setminus S)$ and $\widetilde{O} = \Psi(O)$ with a biholomorphic map $\Phi: O \to \widetilde{O}$ satisfying the condition of Definition \ref{d.G} (1).  \end{proof}

\section{Proof of Theorems \ref{t.1} and \ref{t.CF}}

We can reformulate Theorem \ref{t.1} in terms of Definition \ref{d.iso} as follows.

 \begin{theorem}\label{t.family}
 Let $\sK \subset {\rm Douady}(X)$ be a  free family  in a complex manifold $X$ whose members have separating normal bundles. Then there exists a positive integer $\ell=\ell(\sK)$ such that for any  free family $\widetilde{\sK} \subset {\rm Douady}(\widetilde{X})$  in a  complex manifold $\widetilde{X}$, if $\sK$ and $\widetilde{\sK}$ are  iso-equivalent up to order $\ell$, then they are germ-equivalent. \end{theorem}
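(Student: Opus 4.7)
The plan is to translate the family-theoretic equivalence problem into an equivalence problem for a $G$-structure on $X$ and then invoke Theorem~\ref{t.G}.

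First, from the free family $\sK$ I would construct a $G$-structure on an open subset $X^\circ \subset X$. For each $x \in X^\circ$, let $\sK_x \subset \sK$ be the subscheme of members containing $x$, and consider the Gauss-type map $\sK_x \to \Gr(d, T_{X,x})$, $[A] \mapsto T_{A,x}$, where $d = \dim A$; let $\sV_x \subset \Gr(d, T_{X,x})$ be its image. Freeness ensures that $\sV_x$ is nonempty and varies holomorphically with $x$; shrinking $X^\circ$ if necessary, one may arrange that all the $\sV_x$ are mutually projectively equivalent. Fixing a reference isomorphism $V \cong T_{X, x_0}$, let $G \subset {\rm GL}(V)$ be the isotropy subgroup of $\sV_{x_0}$, and set $\sP := \{f \in \sR(X^\circ) : f_* \sV_{x_0} = \sV_{x}\}$. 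This is a holomorphic $G$-structure on $X^\circ$. The separating hypothesis is what makes $\sP$ a faithful encoding of $\sK$: distinct points of a member $A$ are distinguished by their tangent-type subspaces $T_{[A]} \sK_x \subset T_{[A]} \sK$, so each $A \cap X^\circ$ can be recovered locally from $\sP$ alone.

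Next, I would translate iso-equivalence of families into local equivalence of $G$-structures up to finite order. Suppose $\sK$ and $\widetilde{\sK}$ are iso-equivalent up to order $\ell$, with $f$ and $F_\ell$ as in Definition~\ref{d.iso}, and construct $\widetilde{\sP}$ analogously. Choose a holomorphic section $\sigma: U \to \sW$ of $\mu|_\sW$ over a small open subset $U \subset X^\circ$, write $A(x) := \sigma(x)$, and set $\Psi(x) := \widetilde{\mu}(F_0(x, \sigma(x)))$; for generic $\sigma$ this is a local biholomorphism $\Psi: U \to \widetilde{U}$. For each $x \in U$, the inclusion $\sI_{A(x),x}^{\ell+1} \subset \fm_x^{\ell+1}$ realizes $(x/X)_\ell$ as a closed subspace of $(A(x)/X)_\ell$, so restricting $F_\ell$ to $(A(x)/X)_\ell$ produces an isomorphism $\psi^x: (x/X)_\ell \to (\Psi(x)/\widetilde{X})_\ell$. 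Using that $F_\ell$ commutes with $\rho_\ell$ and $\widetilde{\rho}_\ell$ over $f$ --- hence intertwines the family structure at \emph{every} $[A'] \in \sK_x$, not only at the distinguished $A(x)$ --- one verifies that $\psi^x_*$ carries $(\sP_x/\sP)_{\ell-1}$ to $(\widetilde{\sP}_{\Psi(x)}/\widetilde{\sP})_{\ell-1}$. Thus $\sP$ and $\widetilde{\sP}$ are locally equivalent up to order $\ell$ in the sense of Definition~\ref{d.G}(2).

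Theorem~\ref{t.G} now produces a constant $k_o = k_o(\sP)$ such that any $G$-structure locally equivalent to $\sP$ up to order $k_o$ is in fact locally equivalent to $\sP$. Taking $\ell(\sK)$ to be sufficiently large in terms of $k_o$, the previous paragraph provides a local equivalence of $G$-structures at the required order, and Theorem~\ref{t.G} upgrades it to a biholomorphism $\Phi: O \to \widetilde{O}$ with $\Phi_*(\sP|_O) = \widetilde{\sP}|_{\widetilde{O}}$. The reconstruction of members from the $G$-structure --- which again uses the separating condition --- then forces $\Phi$ to match $A \cap O$ with $\widetilde{A} \cap \widetilde{O}$ for every $[A] \in \sK$ with $f([A])$ meeting $\widetilde{O}$, yielding the required germ-equivalence $(A/X)_\sO \cong (\widetilde{A}/\widetilde{X})_\sO$.

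The hardest step will be the middle one: checking that the pointwise isomorphisms $\psi^x$, built from a single distinguished member $A(x)$, are compatible with the \emph{full} $G$-structure $\sP$, which encodes \emph{all} members of $\sK$ through $x$. That compatibility has to be extracted from the universality of the family iso-equivalence together with the separating condition, which is what forces the various tangent subspaces $T_{A',x}$ for $[A'] \in \sK_x$ to align coherently under $\psi^x$.
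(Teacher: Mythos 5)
Your overall strategy (encode the family in a $G$-structure, transfer the finite-order data, apply Theorem~\ref{t.G}, then reconstruct members) is the right general shape, but the specific $G$-structure you propose does not exist in the generality of the theorem, and the transfer step fails as stated. First, to get a $G$-structure on $X^\circ$ from the tangent varieties $\sV_x \subset \Gr(d,T_{X,x})$ you need all the $\sV_x$ to be \emph{projectively equivalent} to a fixed model $\sV_{x_0}$; this is an isotriviality hypothesis on the family of tangent varieties which is genuinely restrictive (it is exactly the special situation studied in \cite{Hw10}) and cannot be arranged by shrinking $X^\circ$, since the projective-equivalence class of $\sV_x$ can vary in moduli with $x$. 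Without it there is no closed group $G\subset {\rm GL}(V)$ and no principal subbundle $\sP\subset\sR(X^\circ)$, so Theorem~\ref{t.G} has nothing to apply to. The paper sidesteps precisely this issue by placing the $G$-structure not on $X$ but on the universal family $\sU$, taking $\sP^{\rho,\mu}$ to be the frames adapted to the two foliations $T^\rho=\Ker({\rm d}\rho)$ and $T^\mu=\Ker({\rm d}\mu)$ (Definition~\ref{d.sP}); there the structure group is always the fixed stabilizer of the pair $(V^\rho,V^\mu)$, independently of any isotriviality.

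Second, the step you flag as hardest is not merely hard but unsupported by the iso-equivalence data. Your $\psi^x$ is built by restricting $F_\ell$ along the single distinguished member $A(x)$, and you then need ${\rm d}_x\psi^x$ to carry $T_{A',x}$ into the tangent variety at $\Psi(x)$ for \emph{every} $[A']\in\sK_x$. But Definition~\ref{d.iso}(2) only requires $F_\ell$ to commute with $\rho_\ell,\widetilde\rho_\ell$ over $f$ and with the inclusions $j^\ell_k$; it does \emph{not} require $F_0$ to commute with $\mu,\widetilde\mu$ (if it did, germ-equivalence would be nearly immediate). Consequently the isomorphisms of $(x/X)_\ell$ induced by different members $A'$ through $x$ land at different points $\widetilde\mu(F_0(x,[A']))$ and need not agree with $\psi^x$, so no coherence of the tangent directions at $x$ can be extracted this way; the separating condition does not repair this. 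In the paper this difficulty disappears because the comparison is made at each point $u=(x,[A])$ of $\sU$, where only the one member $A$ is involved: Lemmas~\ref{l.GLS} and~\ref{l.coordi} show the order-$k$ data along the fiber controls $(\sP^{\rho}_u/\sP^{\rho})_{k-1}$ and $(\sP^{\mu}_u/\sP^{\mu})_{k-1}$ separately. Finally, your last step (recovering the members of $\sK$ from the structure on $X$ and concluding that $\Phi$ matches members) is essentially a Cartan--Fubini-type statement and is itself a substantial theorem; the paper's corresponding step is Lemma~\ref{l.Hw}, i.e.\ Proposition~3.3 of \cite{Hw19}, proved by the method of \cite{HM01} for the double-fibration structure on $\sU$, and you would need an analogous (and in your setting unproved) result for the tangent-variety structure on $X$.
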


The proof of Theorem \ref{t.family} is by applying Theorem \ref{t.G} to the G-structure on the universal family $\sU$ of Definition \ref{d.iso} determined by the fibrations $\rho$ and $\mu$ in the following way.

\begin{definition}\label{d.sP} In Definition \ref{d.iso}, consider the two vector subbundles of $T_{\sU},$  $$T^{\rho} := {\rm Ker} ({\rm d} \rho) \mbox{ and } T^{\mu} := {\rm Ker}({\rm d} \mu).$$ They satisfy $T^{\rho} \cap T^{\mu} =0$. Fix a vector space $V$ with two subspaces $V^{\rho}, V^{\mu} \subset V$ such that $\dim V = \dim \sU, \ \dim V^{\rho} = {\rm rank } (T^{\rho}), \ \dim V^{\mu} = {\rm rank } (T^{\mu})$ and $  V^{\rho} \cap V^{\mu} =0.$
Let  $\sP^{\rho}$ (resp. $\sP^{\mu}$) be the fiber subbundle of the frame bundle $\sR(\sU)$ whose fiber
 $\sP^{\rho}_y$ (resp. $\sP^{\mu}_y$) at $y \in \sU$ is
   $$ \sP^{\rho}_y := \{h \in {\rm Isom}(V, T_{\sU,y}) = \sR_y(\sU), \ h(V^{\rho}) = T^{\rho}_y \} $$ $$ \mbox{ ( resp. } \sP^{\mu}_y : = \{ h \in {\rm Isom}(V, T_{\sU, y}) = \sR_y(\sU), \ h(V^{\mu}) = T^{\mu}_y \} \mbox{).} $$
   Then define $\sP^{\rho,\mu} := \sP^{\rho} \cap \sP^{\mu}$, a  $G$-structure on $\sU$ whose structure group $G$ is the  subgroup of ${\rm GL}(V)$ preserving $V^{\rho}$ and $V^{\mu}$.
\end{definition}

To prove Theorem \ref{t.family}, we use the following lemma describing a key geometric property of the $G$-structure in Definition \ref{d.sP} arising from the separating condition on the normal bundles of members of the family $\sK$. This lemma was proved in Proposition 3.3 of \cite{Hw19} using an argument generalizing the proof of the Cartan-Fubini type extension theorem in \cite{HM01}.

    \begin{lemma}\label{l.Hw}
    Let $\sK \stackrel{\rho}{\leftarrow} \sU \stackrel{\mu}{\rightarrow} X$ and
    $\widetilde{\sK} \stackrel{\widetilde{\rho}}{\leftarrow} \widetilde{\sU} \stackrel{\widetilde{\mu}}{\rightarrow} \widetilde{X}$ be two  free families, whose members have separating normal bundles. Let $\sP = \sP^{\rho, \mu}$ on $\sU$  and $\widetilde{\sP} = \sP^{\widetilde{\rho}, \widetilde{\mu}}$ on $\widetilde{\sU}$ be the G-structures  defined in Definition \ref{d.sP}.  Assume that $\sP$ and $\widetilde{\sP}$ are locally equivalent by a biholomorphic map  $\Phi: O \to \widetilde{O}$ as in Definition \ref{d.G} (1)
      for some connected nonempty open subsets $O \subset \sU$ and $\widetilde{O} \subset \widetilde{\sU}$. Then after shrinking $O$ and $\widetilde{O}$ if necessary,   \begin{itemize}
    \item[(i)]
    $\Phi$ sends fibers of $\mu|_O$ to fibers of $\widetilde{\mu}|_{\widetilde{O}}$ descending to a biholomorphic map $\Phi^{\flat}: \mu(O) \to \widetilde{\mu}(\widetilde{O})$;
        \item[(ii)] $\Phi$ sends fibers of $\rho|_O$ to fibers of $\widetilde{\rho}|_{\widetilde{O}}$
        descending to a biholomorphic map $\Phi^{\sharp}: \rho(O) \to \widetilde{\rho}(\widetilde{O})$;
        \item[(iii)] for each $z \in O$ and $\widetilde{z} = \Phi(z)$, there exists a biholomorphic map $F: W \to \widetilde{W}$ from a neighborhood $W$ of $\rho^{-1}(\rho(z))$ to a neighborhood $\widetilde{W}$ of $\widetilde{\rho}^{-1}(\widetilde{\rho}(\widetilde{z}))$ such that the germ of $F$ at $z$ equals the germ of $\Phi$ at $z$; and \item[(iv)]
            setting $A := \mu(\rho^{-1}(\rho(z)) \subset X$ and $\widetilde{A}:= \widetilde{\mu}(\widetilde{\rho}^{-1}(\widetilde{\rho}(\widetilde{z}))) \subset \widetilde{X},$
            the map $F$ induces a biholomorphic map $F^{\flat}: (A/X)_{\sO} \cong (\widetilde{A}/\widetilde{X})_{\sO}$ which agrees with $\Phi^{\flat}$ on $(\mu(z)/X)_{\sO}$.  \end{itemize} \end{lemma}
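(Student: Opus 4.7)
The plan is to first derive (i) and (ii) directly from the definition of the $G$-structures, then handle (iii) by extending $\Phi$ analytically along the compact fiber using the separating condition on the normal bundle, and finally to deduce (iv) formally.

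For (i) and (ii), by the construction in Definition \ref{d.sP}, the distribution $T^\mu$ is characterized as the unique subbundle of $T_\sU$ that corresponds under every frame in $\sP$ to the fixed subspace $V^\mu \subset V$, and analogously for $T^\rho$. Local equivalence of $G$-structures via $\Phi$ then gives $d\Phi(T^\mu)=T^{\widetilde{\mu}}$ and $d\Phi(T^\rho)=T^{\widetilde{\rho}}$. Both distributions are integrable, being kernels of submersions, and their leaves are precisely the fibers of $\mu$ and $\rho$. So $\Phi$ carries leaves to leaves, and after shrinking $O$ (and $\widetilde{O}$) to a distinguished open set for both foliations simultaneously, we obtain descended biholomorphisms $\Phi^\flat : \mu(O) \to \widetilde{\mu}(\widetilde{O})$ on the $\mu$-quotient and $\Phi^\sharp : \rho(O) \to \widetilde{\rho}(\widetilde{O})$ on the $\rho$-quotient.

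The main obstacle is (iii), which is a Cartan-Fubini style analytic continuation in the spirit of \cite{HM01}. Fix $z \in O$ and set $B := \rho^{-1}(\rho(z))$, $A := \mu(B)$, with $\widetilde{B}, \widetilde{A}$ defined analogously at $\widetilde{z} = \Phi(z)$. The aim is to extend $\Phi$, originally defined only in a neighborhood of $z$, to a neighborhood of the entire compact submanifold $B$ in $\sU$. At any point $z' \in B$ one can attempt to analytically continue $\Phi$: the $G$-structure equivalence combined with compatibility with the two foliations rigidly determines a candidate extension in a neighborhood of $z'$, because a local equivalence of $G$-structures is unique once a germ is specified. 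The delicate step is showing that the germ obtained at $z'$ is independent of the path of continuation in $B$, i.e., that the monodromy is trivial. This is exactly where the separating condition $H^0(A, N_{A/X} \otimes \mathbf{m}_x) \neq H^0(A, N_{A/X} \otimes \mathbf{m}_y)$ is essential: it forces a point of $A$ to be recoverable from the collection of infinitesimal deformations of $A$ that pass through it, so the candidate extension is pinned down by the already-determined map $\Phi^\sharp$ on $\sK$, which is single-valued. Carrying this out—following the family version of the Cartan-Fubini argument used in \cite{Hw19}—produces a holomorphic map $F : W \to \widetilde{W}$ whose germ at $z$ is that of $\Phi$; applying the same construction to $\Phi^{-1}$ shows $F$ is biholomorphic.

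Finally, (iv) follows by combining (iii) with (i). Since $F$ coincides with $\Phi$ near $z$ and $\Phi$ sends $\mu$-fibers to $\widetilde{\mu}$-fibers, the map $F$ does so in a neighborhood of $z$, and hence along all of $B$ by the uniqueness of analytic continuation inherent in the $G$-structure rigidity. Therefore $F$ descends along $\mu$ to a biholomorphism $F^\flat : (A/X)_\sO \cong (\widetilde{A}/\widetilde{X})_\sO$, which manifestly agrees with the germ of $\Phi^\flat$ on $(\mu(z)/X)_\sO$.
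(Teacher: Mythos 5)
Your parts (i) and (ii) are fine: equivalence of the $G$-structures forces ${\rm d}\Phi(T^{\rho})=T^{\widetilde{\rho}}$ and ${\rm d}\Phi(T^{\mu})=T^{\widetilde{\mu}}$, and shrinking to distinguished neighborhoods of the two foliations gives the descended maps $\Phi^{\flat}$ and $\Phi^{\sharp}$. Note, for calibration, that the paper itself gives no argument for this lemma: it is quoted from Proposition 3.3 of \cite{Hw19}, whose proof is a family version of the Cartan--Fubini extension of \cite{HM01}. Since you too ultimately defer the hard step to that argument, your overall route is the same as the paper's.

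However, the mechanism you offer for (iii) has a genuine flaw. You assert that ``a local equivalence of $G$-structures is unique once a germ is specified,'' so that at each $z'\in\rho^{-1}(\rho(z))$ the continuation of $\Phi$ is ``rigidly determined'' and one only has to kill monodromy. For the structure $\sP=\sP^{\rho,\mu}$ of Definition \ref{d.sP} this is false: the structure consists of two distributions with zero intersection (not even spanning $T_{\sU}$ in general), and its local automorphism group is infinite-dimensional --- any local biholomorphism preserving the two foliations is an automorphism. So no finite jet of $\Phi$ pins down a continuation, and there is in general no canonical germ at $z'$ to continue to; the path-continuation/monodromy scheme has nothing to continue. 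The actual argument of \cite{HM01} and of Proposition 3.3 of \cite{Hw19} does not use rigidity of the structure at all: it exploits that $\Phi^{\sharp}$ is already defined on a whole neighborhood of $[A]=\rho(z)$ in $\sK$, and uses the separating property of $N_{A/X}$ to reconstruct each point $x$ of $X$ near $A$ from the set of members through $x$ (equivalently from $H^0(A,N_{A/X}\otimes{\bf m}_x)$); this defines $F^{\flat}$, and hence $F$, directly on a neighborhood of the whole fiber, rather than by propagating germs. You do mention that the separating condition lets $\Phi^{\sharp}$ pin the extension down, which is the right idea, but as written your proof of (iii) rests on the incorrect rigidity claim, and (iv) inherits the same problem when you invoke ``uniqueness of analytic continuation inherent in the $G$-structure rigidity'' (there the correct substitute is simply the identity theorem: the analytic condition ${\rm d}F(T^{\mu})=T^{\widetilde{\mu}}$ holds on an open set and hence on the connected neighborhood $W$).
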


We skip the proof of the following elementary fact, which can be seen from the proof of the holomorphic implicit function theorem (e.g. Theorem I.1.18 in \cite{GLS}).

\begin{lemma}\label{l.GLS}
Let $\alpha: M \to B$ and $\widetilde{\alpha}: \widetilde{M} \to \widetilde{B}$ be two smooth surjective morphisms of complex manifolds. Fix a positive integer $k$. For $z \in M, b = \alpha(z), \widetilde{z} \in \widetilde{M}, \widetilde{b} = \widetilde{\alpha}(\widetilde{z})$, suppose that we have biholomorphic maps
$$ \gamma: (z/M)_k \to (\widetilde{z}/\widetilde{M})_k \mbox{ and }
\psi: (b/B)_k \to (\widetilde{b}/\widetilde{B})_k $$ satisfying $\psi \circ \alpha|_{(z/M)_k} = \widetilde{\alpha} \circ \gamma$. Then we can find  biholomorphic maps
$$ \Gamma: (z/M)_{\sO} \to (\widetilde{z}/\widetilde{M})_{\sO} \mbox{ and }
\Psi: (b/B)_{\sO} \to (\widetilde{b}/\widetilde{B})_{\sO} $$ satisfying $\Psi \circ \alpha|_{(z/M)_{\sO}} = \widetilde{\alpha} \circ \Gamma, \Gamma|_{(z/M)_k} = \gamma$ and $\Psi|_{(b/B)_k} = \psi$. \end{lemma}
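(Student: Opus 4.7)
The plan is to use the holomorphic implicit function theorem to put both submersions simultaneously into standard projection form in local coordinates, and then simply to declare $\Gamma$ and $\Psi$ to be the polynomial representatives of $\gamma$ and $\psi$ regarded as convergent holomorphic germs.

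First I would invoke Theorem I.1.18 of \cite{GLS} to obtain local holomorphic coordinates $(b_1,\ldots,b_d,t_1,\ldots,t_f)$ on a neighborhood of $z \in M$ and $(b_1,\ldots,b_d)$ on a neighborhood of $b \in B$ --- with $d = \dim B$ and $f = \dim M - \dim B$ --- in which $\alpha$ is the standard projection $(b,t)\mapsto b$, and pick analogous adapted coordinates $(\widetilde b,\widetilde t)$ on $\widetilde M$ near $\widetilde z$ and $(\widetilde b)$ on $\widetilde B$ near $\widetilde b$ in which $\widetilde \alpha$ is the projection. In these coordinates $\psi$ is represented by a $d$-tuple of polynomials $\psi_1(b),\ldots,\psi_d(b)$ of degree $\leq k$, and the compatibility $\psi \circ \alpha|_{(z/M)_k} = \widetilde \alpha \circ \gamma$ forces $\gamma$ to have the form $\gamma(b,t) = (\psi_1(b),\ldots,\psi_d(b),\gamma^T_1(b,t),\ldots,\gamma^T_f(b,t))$ with the $\gamma^T_i$ polynomials of degree $\leq k$ in $(b,t)$.

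I would then define $\Psi$ on a Euclidean neighborhood of $b$ by the assignment $\widetilde b_j := \psi_j(b)$, reading the polynomials $\psi_j$ as convergent holomorphic germs, and analogously define $\Gamma$ by $\widetilde b_j := \psi_j(b)$ and $\widetilde t_i := \gamma^T_i(b,t)$. By construction the identities $\Psi|_{(b/B)_k} = \psi$, $\Gamma|_{(z/M)_k} = \gamma$, and $\widetilde \alpha \circ \Gamma = \Psi \circ \alpha$ all hold tautologically as germ-morphisms. The only remaining point --- and it is where the small amount of technical content of the proof lies --- is to check that $\Psi$ and $\Gamma$ are actually germ biholomorphisms, i.e.\ that their Jacobians at $b$ and $z$ are invertible. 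Since $\gamma$ is an isomorphism of first-order neighborhoods (using $k \geq 1$), its linear part at $z$ is invertible; writing out the Jacobian of $\Gamma$ at $z$ shows it is block lower-triangular with diagonal blocks equal to the linear part of $\psi$ at $b$ and to the matrix $[\partial \gamma^T_i/\partial t_j(z)]$, and the invertibility of the full Jacobian of $\gamma$ forces both diagonal blocks to be invertible. The same argument (upper block alone) handles $\Psi$. This block-triangular Jacobian verification is the one nontrivial step, but it is entirely routine, and it completes the construction.
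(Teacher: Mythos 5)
Your proof is correct and follows exactly the route the paper intends: the paper omits the argument, saying only that it ``can be seen from the proof of the holomorphic implicit function theorem (Theorem I.1.18 of \cite{GLS})'', which is precisely your step of straightening $\alpha$ and $\widetilde\alpha$ into coordinate projections. The remaining steps---taking polynomial representatives of $\psi$ and $\gamma$, noting that the compatibility forces the first block of $\gamma$ to depend only on $b$, and the block-triangular Jacobian check giving invertibility of both $\Gamma$ and $\Psi$---are carried out correctly.
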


\begin{proof}[Proof of Theorem \ref{t.family}]
We have the G-structures $\sP$ on $\sU$ and $\widetilde{\sP}$ on $\widetilde{\sU}$ from Lemma \ref{l.Hw}.
  Put $\ell(\sK) = k_o$ where the integer $k_o$ is from Theorem \ref{t.G} applied to  the G-structure $\sP$ on $M= \sU$. Assuming that $\sK$ and $\widetilde{\sK}$ are  iso-equivalent up to order $k \geq k_o$, we prove that they are germ-equivalent as follows.

In the terminology of Definition \ref{d.iso} (2),
for each member $[A]$ of $\sW \subset \sK$, we have the corresponding member $[\widetilde{A}]\in \widetilde{\sW} \subset \widetilde{\sK}$
 with an isomorphism $\varphi: (A/X)_k \to (\widetilde{A}/\widetilde{X})_k$.
 We have the induced isomorphism $\varphi^{\sU}$ and $\varphi^{\sK}$ from Lemma \ref{l.Hir} which commutes with $\rho, \widetilde{\rho}, \mu$ and $\widetilde{\mu}$.  Its restriction $$\varphi_u : (u/\sU)_k \to (\widetilde{u}/\widetilde{\sU})_k, \ \widetilde{u} := \varphi^{\sU}(u)$$ at a point $u \in \rho^{-1}([A])$ and $\widetilde{u} = \varphi(u)$ satisfies $\varphi^{\sK} \circ \rho|_{(u/\sU)_k} = \widetilde{\rho} \circ \varphi_u$. By Lemma \ref{l.GLS}, we can  extend $\varphi_u$ and $\varphi^{\sK}$ to a biholomorphic map
 $$\Phi^{\rho}: (u/\sU)_{\sO} \to (\widetilde{u}/\widetilde{\sU})_{\sO} \mbox{ and } \Psi^{\sK}: ([A]/\sK)_{\sO} \to ([\widetilde{A}]/\widetilde{\sK})_{\sO}$$
 such that $\Psi^{\sK} \circ \rho|_{(u/\sU)_{\sO}} = \widetilde{\rho} \circ \Phi^{\rho}. $ Then $\Phi^{\rho}_*: \sR((u/\sU)_{\sO}) \to \sR((\widetilde{u}/\widetilde{\sU})_{\sO})$ sends $(\sP^{\rho}_u/ \sP^{\rho})_{k-1}$
 to $(\widetilde{\sP}^{\widetilde{\rho}}_{\widetilde{u}}/\widetilde{\sP}^{\widetilde{\rho}})_{k-1}.$
 But the latter property is independent of the choice of the extension $\Phi^{\rho}$ by Lemma \ref{l.coordi}. It follows that $\varphi_u$ sends $(\sP^{\rho}_u/ \sP^{\rho})_{k-1}$
 to $(\widetilde{\sP}^{\widetilde{\rho}}_{\widetilde{u}}/\widetilde{\sP}^{\widetilde{\rho}})_{k-1}.$

 Similarly, applying Lemma \ref{l.GLS} with $\varphi \circ \mu|_{(u/\sU)_k} = \widetilde{\mu} \circ \varphi_u,$ we can find biholomorphic maps
 $$\Phi^{\mu}: (u/\sU)_{\sO} \to (\widetilde{u}/\widetilde{\sU})_{\sO} \mbox{ and } \Psi^{X}: (x/X)_{\sO} \to (\widetilde{x}/\widetilde{X})_{\sO}$$ with $x = \mu(u)$ and $ \widetilde{x} = \widetilde{\mu}(\widetilde{u})$
 such that $\Psi^{X} \circ \mu|_{(u/\sU)_{\sO}} = \widetilde{\mu} \circ \Phi^{\mu}. $
 By the same argument as before, we see that $\varphi_u$ sends $(\sP^{\mu}_u/ \sP^{\mu})_{k-1}$
 to $(\widetilde{\sP}^{\widetilde{\mu}}_{\widetilde{u}}/\widetilde{\sP}^{\widetilde{\mu}})_{k-1}.$

 From $\sP = \sP^{\rho} \cap \sP^{\mu}$ and $\widetilde{\sP} = \sP^{\widetilde{\rho}} \cap \sP^{\widetilde{\mu}}$, we see that $\varphi_u$ sends $(\sP_u/ \sP)_{k-1}$
 to $(\widetilde{\sP}_{\widetilde{u}}/\widetilde{\sP})_{k-1}.$
 Thus the G-structures $\sP$ and $\widetilde{\sP}$ are locally equivalent up to order $k_o$ in the sense of Definition \ref{d.G} (2) with $U = \rho^{-1}(\sW)$. By Theorem \ref{t.G}, they are locally equivalent in the sense of Definition \ref{d.G} (1). Hence we can find open subsets $O \subset \sU$ and $\widetilde{O} \subset \widetilde{\sU}$ with a biholomorphic map $\Phi$ satisfying the conditions of Lemma \ref{l.Hw}. The holomorphic map $F^{\flat}$ in Lemma \ref{l.Hw} (iv) implies that $\sK$ and $\widetilde{\sK}$ are germ-equivalent.
\end{proof}

\begin{remark}\label{r.JeVe}  Theorem \ref{t.family} combined with Theorem \ref{t.example} and Lemma \ref{l.Hw10}  implies the following  version of Theorem \ref{t.JeVe}.
\begin{itemize} \item
 Let $S \subset \BP^n$ be a submanifold. Then there exists  a positive integer $\ell$ (depending on $S$) such that
if $S$ and a submanifold $\widetilde{S} \subset \BP^n$ have  contact of order $\ell$, then  there exists $\gamma \in {\rm PGL}(n+1)$ such that $S \cap \gamma(\widetilde{S})$ contains a nonempty open subset  in $S$.
\end{itemize}
This, however, is weaker than Theorem \ref{t.JeVe} as the number $\ell$
from Theorem \ref{t.family} depends on $S$, while the number $\ell$ in Theorem \ref{t.JeVe} depends only on the dimension $n$. We do not know if the number $\ell$ in Theorem \ref{t.family} (resp. the number $k_o$ in Theorem \ref{t.G})
can be bounded in terms of the dimension of the universal family $\sU$ (resp. the dimension of $M$).
\end{remark}

Finally, to derive Theorem \ref{t.CF} from Theorem \ref{t.family}, we recall the following version of Cartan-Fubini type extension theorem. This is equal to  Theorem 3.9 of \cite{Hw19}: the only difference is that we are making  the additional assumption (ii) below to simplify the statement. As explained in the proof of Theorem 3.9 in \cite{Hw19}, Theorem \ref{t.CFO} was  proved essentially in \cite{HM01}.

\begin{theorem}\label{t.CFO}
Let $X$ and $\widetilde{X}$ be Fano manifolds of Picard number 1.
 Let $\sM$ (resp. $\widetilde{\sM}$) be a family of minimal rational curves on $X$ (resp. $\widetilde{X}$). Assume \begin{itemize}
  \item[(i)] the subschemes $\sM_x$ and $\widetilde{\sM}_{\widetilde{x}}$ are irreducible for  general $x \in X$ and $\widetilde{x} \in \widetilde{X}$; and
      \item[(ii)]  general members of $\sM$ and $\widetilde{\sM}$ are nonsingular. \end{itemize}
          If there are members $A \subset X$ of $\sM$ and $\widetilde{A} \subset \widetilde{X}$ of $\widetilde{\sM}$ with a biholomorphic map  $$\Phi: (A/X)_{\sO} \cong (\widetilde{A}/\widetilde{X})_{\sO},$$ then there exists a biregular morphism $ X \to \widetilde{X}$ which induces $\Phi$. \end{theorem}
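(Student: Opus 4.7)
The plan is to reduce Theorem \ref{t.CFO} to the classical Cartan-Fubini type extension theorem of Hwang-Mok \cite{HM01}, by upgrading the germ-level biholomorphism $\Phi$ to a local biholomorphism at a general point of $A$ whose derivative preserves varieties of minimal rational tangents (VMRTs). Once this is achieved, the classical theorem delivers the required global biregular morphism $X\to\widetilde{X}$.

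First, I would realize $\Phi$ by a genuine biholomorphism between Euclidean-open neighborhoods $U\supset A$ in $X$ and $\widetilde{U}\supset\widetilde{A}$ in $\widetilde{X}$ (the notation $(A/X)_{\sO}$ stands for this germ). Because $A$ is a minimal rational curve, hence unbendable and in particular free, small deformations of $A$ through points of $A$ are parametrized by sections of the normal bundle and remain inside $U$ near $A$. Hence $\Phi$ transports the universal family of minimal rational curves of $\sM$ near $[A]$ onto the universal family of $\widetilde{\sM}$ near $[\widetilde{A}]$, inducing a biholomorphism $f$ from an open neighborhood $\sW\subset\sM$ of $[A]$ onto an open neighborhood $\widetilde{\sW}\subset\widetilde{\sM}$ of $[\widetilde{A}]$ such that $f([B])=[\Phi(B\cap U)^{\rm alg}]$ for $[B]\in\sW$, where $(\cdot)^{\rm alg}$ denotes the algebraic closure.

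Second, I would fix a general point $x\in A$ with image $\widetilde{x}=\Phi(x)\in\widetilde{A}$ and examine the derivative ${\rm d}_x\Phi:T_{X,x}\to T_{\widetilde{X},\widetilde{x}}$. For minimal rational curves through $x$ close to $A$, the tangent directions at $x$ sweep out an open subset of $\sC_x\subset\BP T_{X,x}$, the VMRT at $x$; by Step 1, these directions are carried bijectively under $\BP({\rm d}_x\Phi)$ to tangent directions of minimal rational curves through $\widetilde{x}$ close to $\widetilde{A}$, which sweep out an open subset of $\sC_{\widetilde{x}}$. Assumption (i) ensures that $\sM_x$ and $\widetilde{\sM}_{\widetilde{x}}$ are irreducible; assumption (ii) ensures the tangent map from parameter space to VMRT is well-defined along a general member. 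Consequently $\sC_x$ and $\sC_{\widetilde{x}}$ are irreducible analytic subvarieties, and the generic-open identification promotes to a germ-level identification $\BP({\rm d}_x\Phi)(\sC_x,[T_{A,x}])\cong(\sC_{\widetilde{x}},[T_{\widetilde{A},\widetilde{x}}])$.

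Third, I would invoke the Cartan-Fubini type extension theorem of Hwang-Mok \cite{HM01}: given two Fano manifolds of Picard number $1$ with families of minimal rational curves whose general VMRTs are irreducible and positive-dimensional, any germ of biholomorphism at a general point whose derivative identifies the two VMRTs extends to a global biregular morphism between the two manifolds. The germ of $\Phi$ at $x$ provided by Step 2 satisfies exactly these hypotheses, so it extends to a biregular $\Psi:X\to\widetilde{X}$. By analytic continuation along $A$, the germ of $\Psi$ coincides with $\Phi$ on a neighborhood of $x$ in $U$, hence on all of $(A/X)_{\sO}$; thus $\Psi$ induces $\Phi$.

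The main obstacle is Step 2: one must argue that an open subset of $\sC_x$ determined by curves inside the germ $(A/X)_{\sO}$ is Zariski-dense in $\sC_x$ and that ${\rm d}_x\Phi$ analytically continues across the VMRT. This is where irreducibility of $\sM_x$ (assumption (i)) is essential, as it forces any locally-defined analytic map of VMRTs, agreeing on an open set, to define a germ isomorphism at the distinguished point $[T_{A,x}]$; without irreducibility (e.g.\ the cubic threefold case mentioned in the paper) the argument fails and the conclusion of the theorem itself can fail.
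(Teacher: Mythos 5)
The paper gives no proof of Theorem \ref{t.CFO}: it is quoted as Theorem 3.9 of \cite{Hw19} and attributed to \cite{HM01}, so the comparison here is with the deduction carried out in those references. Your argument is essentially that deduction, namely the reduction to the VMRT form of the Cartan-Fubini extension theorem: $\Phi$, being defined on a full Euclidean neighborhood $U$ of the compact curve $A$, carries the members of $\sM$ sufficiently close to $[A]$ --- which lie entirely inside $U$ by properness of the universal family, so no ``algebraic closure'' is needed in your Step 1 --- to small deformations of $\widetilde{A}$, hence to members of $\widetilde{\sM}$; at a general $x\in A$ this identifies a Euclidean-open neighborhood of $[T_{A,x}]$ in $\sC_x$ with one of $[T_{\widetilde{A},\widetilde{x}}]$ in $\sC_{\widetilde{x}}$ under $\BP({\rm d}_x\Phi)$, and since ${\rm d}_x\Phi$ is linear and $\sC_x,\sC_{\widetilde{x}}$ are irreducible projective varieties by assumption (i), the open identification forces $\BP({\rm d}_x\Phi)(\sC_x)=\sC_{\widetilde{x}}$ at every point of a neighborhood of $x$, which is exactly the hypothesis of the Main Theorem of \cite{HM01}. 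One point you leave implicit and should verify: that theorem requires $\dim\sC_x>0$, and this is where assumption (i) is used once more --- a zero-dimensional VMRT forces the normal bundle of a general member to be trivial, which contradicts irreducibility of $\sM_x$ by Proposition 1.2.2 of \cite{HM98}; the paper performs exactly this check in its proof of Theorem \ref{t.CF}. With that supplied, your proposal is correct and follows the same route as the cited proof.
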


\begin{proof}[Proof of Theorem \ref{t.CF}]
 To use Theorem \ref{t.family} to deduce Theorem \ref{t.CF} from Theorem \ref{t.CFO},  it suffices to show that general members of $\sM$ and $\widetilde{\sM}$ in Theorem \ref{t.CF} are free  and their normal bundles have the separating property.
 That general members are free is well-known (e.g. p. 355 of \cite{HM98}). Since the normal bundle of a free rational curve is a direct sum of nonnegative line bundles, if the normal bundle does not have the separating property, then it has to be a trivial bundle. But this contradicts the assumption that $\sM_x$ is irreducible, e.g. from Proposition 1.2.2 of \cite{HM98}. \end{proof}

\begin{remark}
The original formulation of Cartan-Fubini type extension theorem, Main Theorem in \cite{HM01}, is stronger than
Theorem \ref{t.CFO}. It is in terms of the local equivalence of geometric structures determined by the varieties of minimal rational tangents in Definition \ref{d.vmrt} and Theorem \ref{t.CFO} is one, albeit important, step in its proof.
Thus Theorem \ref{t.CF} does not give a purely algebraic formulation of the original version of Cartan-Fubini type extension theorem. In this regard, it would be very interesting to study the geometric structures determined by the varieties of minimal rational tangents from the perspective of Section \ref{s.Gstr}.
\end{remark}

\medskip
{\bf Acknowledgment}.
I would like to thank Ciro Ciliberto, Tohru Morimoto and Shigeru Mukai for valuable discussions. I am very grateful to Yong Hu for pointing out a gap in our approach to Question \ref{q.K3} in an earlier version of the paper and the referees for valuable suggestions.

\end{document}